\newcommand{\R}{\mathbb{R}}
\newcommand{\C}{\mathbb{C}}
\newcommand{\N}{\mathbb{N}}
\newcommand{\Z}{\mathbb{Z}}
\newcommand{\lr}[1]{\langle #1 \rangle}
\newcommand{\eps}{\varepsilon}
\newcommand{\wt}[1]{\widetilde{#1}}
\newcommand{\wh}[1]{\widehat{#1}}
\newcommand{\Sc}{\mathcal{S}}
\newcommand{\Lc}{\mathcal{L}}
\newcommand{\J}{\mathcal{J}}
\newcommand{\wS}{\Sigma}
\newcommand{\lp}{\mathcal{U}}
\newcommand{\self}{\mathfrak{u}}
\newcommand{\dea}{\mathfrak{a}}
\newcommand{\ol}[1]{{\overline{#1}}}
\newcommand{\uhp}{u^{\rm{hyp},+}}
\newcommand{\uhpn}{u^{\rm{hyp},\pm}}
\newcommand{\ue}{u^{\rm{ell}}}
\newcommand{\uep}{u^{\rm{ell},+}}
\newcommand{\uel}{u_{\le t^{-\frac{1}{\m}}}}
\newcommand{\PN}{P_{\frac{N}{2} \le \cdot \le 2N}}
\newcommand{\PNv}{P_{\frac{N_v}{2} \le \cdot \le 2N_v}}
\newcommand{\dx}{\partial_x}
\newcommand{\dy}{\partial_y}
\newcommand{\dt}{\partial_t}
\newcommand{\dz}{\partial_{\zeta}}
\newcommand{\de}{\partial_{\eta}}
\newcommand{\err}{\bm{\mathrm{err}}}
\newcommand{\Ot}{\Omega (t)}
\newcommand{\Orz}{\mathfrak{Y} (t)}
\newcommand{\Xp}{\mathfrak{X}^+ (t)}
\newcommand{\Xn}{\mathfrak{X}^- (t)}
\newcommand{\Xz}{\mathfrak{X}^0 (t)}
\newcommand{\Xf}{\wh{\mathfrak{X}}^+ (t)}
\newcommand{\m}{\mathfrak{m}}
\newcommand{\p}{\mathfrak{p}}
\newcommand{\xc}{\bm{X}}
\newcommand{\con}{\mathfrak{C}}
\newcommand{\al}{\alpha}
\newcommand{\kk}{\kappa}
\DeclareMathOperator{\supp}{supp}
\newtheorem{thm}{Theorem}[section]
\newtheorem{prop}[thm]{Proposition}
\newtheorem{lem}[thm]{Lemma}
\newtheorem{cor}[thm]{Corollary}
\theoremstyle{remark}
\newtheorem{rmk}[thm]{Remark}
\numberwithin{equation}{section}
\title[Asymptotic behavior for a KdV-type equation]{Asymptotic behavior of solutions to \\ a higher-order KdV-type equation \\ with critical nonlinearity}
\author[M. Okamoto]{Mamoru Okamoto}
\address{Division of Mathematics and Physics, Faculty of Engineering, Shinshu University, 4-17-1 Wakasato, Nagano City 380-8553, Japan}
\email{m\_okamoto@shinshu-u.ac.jp}
\subjclass[2010]{35Q53, 35B40}
\keywords{higher-order KdV-type equation, asymptotic behavior, critical nonlinearity, self-similar solution}
\date{\today}
\begin{document}

\begin{abstract}
We consider the Cauchy problem of the higher-order KdV-type equation:
\[
\dt u + \frac{1}{\m} |\dx|^{\m-1} \dx u =  \dx (u^{\m})
\]
where $\m \ge 4$.
The nonlinearity is critical in the sense of long-time behavior.
Using the method of testing by wave packets, we prove that there exists a unique global solution of the Cauchy problem satisfying the same time decay estimate as that of linear solutions.
Moreover, we divide the long-time behavior of the solution into three distinct regions.
\end{abstract}

\maketitle

\section{Introduction}

We consider the Cauchy problem for the higher-order Korteweg-de Vries (KdV) type equation
\begin{equation} \label{hKdV}
\dt u + \frac{1}{\m} |\dx|^{\m-1} \dx u =  \dx (u^{\m}),
\end{equation}
where $u$ is a real valued function, $|\dx| = (-\dx^2)^{\frac{1}{2}}$, and $\m \in \Z_{\ge 3}$.
This equation describes the propagation of nonlinear waves in a dispersive medium.
In particular, \eqref{hKdV} with $\m=3,5$ are called the modified KdV (mKdV) and the modified Kawahara equations, respectively.

If $u$ is a solution to \eqref{hKdV}, then the total mass, the momentum, and the energy are conserved:
\begin{align*}
&\int_{\R} u(t,x) dx, \quad
\int_{\R} u(t,x)^2dx, \\
&\int_{\R} \left\{ \frac{1}{2\m} \left| |\dx|^{\frac{\m-1}{2}} u(t,x) \right|^2 - \frac{1}{\m+1} u(t,x)^{\m+1} \right\} dx.
\end{align*}
Local-in-time well-posedness of the Cauchy problem for \eqref{hKdV} follows from the same argument as in \cite{KPV93} (see also \cite{KPV89, KPV91, Gru10} and references therein).
Owing to the conserved quantities, the local-in-time solution can be extended to the global-in-time one if the values of the initial data are small.
Thus, it is of interest to obtain the asymptotic behavior of solutions to \eqref{hKdV}.

Sidi et al. \cite{SSS86} studied the long-time behavior of solutions to the generalized KdV equations
\begin{equation} \label{gKdV}
\dt u + \frac{1}{\al} |\dx|^{\al-1} \dx u = \dx (u^p)
\end{equation}
for $\al \in \R$, $\al \ge 1$, and $p \in \Z_{\ge 2}$.
More precisely, they proved that when the initial data values are small, the global-in-time solution scatters to a linear solution if $\al \ge 1$ and $p > \frac{\al+\sqrt{\al^2+4\al}}{2}+1$.
Kenig et al. \cite{KPV91} improved the results in \cite{SSS86}, that is the scattering for small initial data values holds true if $\al \ge 1$ and $p>\max (\al+1,\frac{\al}{2}+3)$.
Because there exists a blow up solution in some cases when initial data values are large (\cite{KPV00, Mer01}), the assumption of small initial data values is essential.

The asymptotic behavior for \eqref{gKdV} with $\al=3$ has been studied by several researchers (see \cite{DeiZho93, HayNau98, HayNau99, HayNau01, Tao07, KocMar12, HayNau16, HarG16, GPR16, Dod17} and references therein).
In particular, $p=3$ is critical in the sense of long-time behavior.
In other words, while the solutions scatter for $p>3$, asymptotic behavior of the solution differs from that of the linear solutions when $p=3$.
Moreover, Hayashi and Naumkin \cite{HayNau151, HayNau153} showed the criticality of the quartic derivative fourth-order nonlinear Schr\"{o}dinger equation (see also \cite{HayNau152, HirOka16}), which is related to \eqref{hKdV} with $\m=4$.

From these results, we expect that the nonlinearity of \eqref{hKdV} is critical in the sense of long-time behavior.
However, there is a gap between the exponent of nonlinearity in previous results and that to be critical in general.
In this paper, we study the asymptotic behavior of solutions to \eqref{hKdV} for $\m \ge 4$.
Even though we used $u^{\m}$ in \eqref{hKdV} in our study, the same asymptotic behavior is obtained for \eqref{hKdV} with short-range perturbations (see Remark \ref{rmk:shp}).

To explain the critical phenomenon, we roughly derive the asymptotic behavior of linear solutions for Schwartz initial data $u_0 \in \mathcal{S} (\R)$.
Let $\lp (t)$ denote the linear propagator, i.e., $\lp (t) := e^{-\frac{1}{\m}t |\dx|^{\m-1} \dx}$.
We note that, for $t>0$, the linear solution is written as follows:
\[
\lp (t) u_0 (x)
= t^{-\frac{1}{\m}} \int_{\R} Q_0 \left( t^{-\frac{1}{\m}} (x-y) \right) u_0 (y) dy,
\]
where $Q_0$ is defined by $Q_0 (y) := \frac{1}{2\pi} \int_{\R} e^{i (y \xi - \frac{1}{\m} |\xi|^{\m-1}\xi)} d\xi$, that is, $Q_0$ satisfies the ordinary differential equation
\begin{equation*}
\left| \frac{d}{dy} \right|^{\m-1}Q_0 - y Q_0=0.
\end{equation*}
Sidi et al. \cite{SSS86} proved the following estimate for $Q_0$:
\[
\left| \frac{d^k Q_0}{dy^k} (y) \right| \lesssim \lr{y}^{\frac{k}{\m-1}-\frac{\m-2}{2(\m-1)}},
\]
where $\lr{y} := (1+|y|^2)^{\frac{1}{2}}$.

The Hamiltonian flow corresponding to the linear equation is given by
\[
(x,\xi) \mapsto (x+t|\xi|^{\m-1},\xi).
\]
We expect that, for $v>0$, solutions initially localized spatially near zero and at frequencies near $\pm \xi_v$, where $\xi_v := v^{\frac{1}{\m-1}}$, travel along the ray $\Gamma_v := \{ x=vt \}$.
Hence, the linear solution $\lp (t) u_0(x)$ decays rapidly as $t^{-\frac{1}{\m}} x \to -\infty$ and oscillates as $t^{-\frac{1}{\m}} x \to +\infty$.
In particular, the stationary phase method shows that, as $t^{-\frac{1}{\m}} x \to +\infty$, there exists a constant $c_0$ such that
\[
\lp (t) u_0 (x) = c_0 t^{-\frac{1}{\m}} (t^{-\frac{1}{\m}} x)^{-\frac{\m-2}{2(\m-1)}} \Re \left\{ \wh{u_0} \left( t^{-\frac{1}{\m-1}} x^{\frac{1}{\m-1}} \right) e^{i \phi (t,x)} \right\} + \text{error},
\]
where the phase function is given by
\begin{equation} \label{phase}
\phi (t,x) := \frac{\m-1}{\m} t^{-\frac{1}{\m-1}} |x|^{\frac{\m}{\m-1}} - \frac{\pi}{4}.
\end{equation}
Moreover, in the self-similar region $|t^{-\frac{1}{\m}} x| \lesssim 1$,
\[
\lp (t) u_0 (x) = t^{-\frac{1}{\m}} Q_0 (t^{-\frac{1}{\m}} x) \int_{\R} u_0 (y) dy + \text{error}.
\]

This observation implies that if $\| u_0 \|_{L^2}+ \| x u_0 \|_{L^2} \le \eps$ then we have
\[
\left| \dx^k \lp (t) u_0 (x) \right| \lesssim \eps t^{-\frac{k+1}{\m}} \lr{t^{-\frac{1}{\m}}x}^{\frac{k}{\m-1}-\frac{\m-2}{2(\m-1)}}
\]
for $k=0,1, \dots, \m-2$.
We expect that solutions to \eqref{hKdV} satisfy the same pointwise estimates as linear solutions above when the initial data values are small.
Then,
\[
\| u(t) \|_{L_x^2} \lesssim \| u(1) \|_{L^2} + \eps^{\m-1} \int_1^t \| u(t') \|_{L_x^2} \frac{dt'}{t'}.
\]
Because the integral is not bounded by $\sup_{t \ge 1} \| u(t) \|_{L^2}$, we only expect that the solution behaves like a linear solution up to $t \sim \exp(\eps^{-\m+1})$.
Especially, the asymptotic behavior of the solution differs from that of linear solutions.

\subsection{Main result}

To state our results precisely, we introduce notation and function spaces.
We denote the set of positive and negative real numbers by $\R_+$ and $\R_-$, respectively.
We denote the usual Sobolev space by $H^s(\R)$.
For $s \in \R$, we denote the weighted Sobolev spaces by $\wS^{s}(\R)$ equipped with the norm $\| f \|_{\wS^{s}} := \| f \|_{H^s} + \| x f \|_{L^2}$.

\begin{thm} \label{thm}
Let $\m \ge 4$.
Assume that the initial datum $u_0$ at time $0$ satisfies
\[
\| u_0 \|_{\wS^{\frac{\m-1}{2\m}}} \le \eps \ll 1.
\]
Then, there exists a unique global solution $u$ to \eqref{hKdV} with $\lp (-t) u \in C \big( \R; \wS^{\frac{\m-1}{2\m}}(\R) \big)$ satisfying the estimates
\begin{equation} \label{est:u_infty}
\left\| \lr{t^{-\frac{1}{\m}}x}^{-\frac{k}{\m-1}+\frac{\m-2}{2(\m-1)}} \dx^k u(t) \right\|_{L^{\infty}} \lesssim \eps t^{-\frac{k+1}{\m}}
\end{equation}
for $t \ge 1$ and $k=0,1, \dots, \m-2$.
Moreover, we have the following asymptotic behavior as $t \to +\infty$.

In the decaying region $\Xn := \{ x \in \R_- \colon t^{-\frac{1}{\m}} |x| \gtrsim t^{(\m-1)\rho} \}$, where $\rho := \frac{1}{\m} (\frac{1}{2\m}-\eps)$, we have
\[
\left\| t^{\frac{1}{\m}} \lr{t^{-\frac{1}{\m}} x}^{\frac{2\m-3}{2(\m-1)}} u \right\|_{L^{\infty} (\Xn)} + \left\| t^{\frac{1}{2\m}} \lr{t^{-\frac{1}{\m}} x} u \right\|_{L^2 (\Xn)} \lesssim \eps.
\]
In the self-similar region $\Xz := \{ x \in \R \colon t^{-\frac{1}{\m}} |x| \lesssim t^{(\m-1) \rho} \}$, there exists a solution $Q =Q(y)$ to the nonlinear ordinary differential equation
\begin{equation} \label{eq:self}
\left| \frac{d}{dy} \right|^{\m-1} Q - y Q - \m Q^{\m} =0,
\end{equation}
satisfying $\| Q \|_{L^{\infty}_y} \lesssim \eps$ and
\begin{align*}
\left\| u(t) - t^{-\frac{1}{\m}} Q(t^{-\frac{1}{\m}} x) \right\|_{L^{\infty} (\Xz)} &\lesssim \eps t^{-\frac{1}{\m}-(\m-\frac{3}{2})\rho}, \\
\left\| u(t) - t^{-\frac{1}{\m}} Q(t^{-\frac{1}{\m}} x) \right\|_{L^2 (\Xz)} &\lesssim \eps t^{-\frac{1}{2\m}-(\m-1)\rho}.
\end{align*}
In the oscillatory region $\Xp := \{ x \in \R_+ \colon t^{-\frac{1}{\m}} |x| \gtrsim t^{(\m-1)\rho} \}$, there exists a unique complex-valued function $W$ satisfying $W(\xi)=\ol{W(-\xi)}$ and $\| W \|_{L^{\infty} \cap L^2} \lesssim \eps$ such that
\[
u(t) = \frac{2}{\sqrt{\m-1}} t^{-\frac{1}{\m}} (t^{-\frac{1}{\m}} x)^{-\frac{\m-2}{2(\m-1)}} \Re \left\{ W \left( t^{-\frac{1}{\m-1}} |x|^{\frac{1}{\m-1}} \right) e^{i \phi (t,x)} \right\} + \err_x,
\]
where the error, $\err_x$, satisfied the estimates
\[
\left\| t^{\frac{1}{\m}} (t^{-\frac{1}{\m}} x)^{\frac{3(\m-2)}{4(\m-1)}} \err_x \right\|_{L^{\infty}(\Xp)} + \left\| t^{\frac{1}{2\m}} ( t^{-\frac{1}{\m}} x)^{\frac{\m-2}{2(\m-1)}} \err_x \right\|_{L^2 (\Xp)} \lesssim \eps.
\]
In the corresponding frequency region $\Xf := \{ \xi \in \R_+ \colon t^{\frac{1}{\m}} |\xi| \gtrsim t^{\frac{1}{\m} (\frac{1}{2\m}-\eps)} \}$, we have
\[
\wh{u} (t,\xi) = W(\xi) e^{\frac{1}{\m} it \xi^{\m}} + \err_{\xi},
\]
where the error, $\err_{\xi}$, satisfies
\[
\left\| (t^{\frac{1}{\m}} \xi)^{\frac{\m-2}{4}} \err_{\xi} \right\|_{L^{\infty} (\Xf)} + \left\| t^{\frac{1}{2\m}} (t^{\frac{1}{\m}} \xi)^{\frac{\m-2}{2}} \err_{\xi} \right\|_{L^2 (\Xf)} \lesssim \eps.
\]
\end{thm}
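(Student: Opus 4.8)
The plan is to run the method of testing by wave packets, adapted to the higher-order dispersion $|\dx|^{\m-1}\dx$ and the critical nonlinearity $\dx(u^{\m})$. First I would set up the wave packet: for a velocity parameter $v>0$ with frequency $\xi_v=v^{1/(\m-1)}$, fix a bump function $\chi$ and define a packet $\Psi_v(t,x)$ concentrated spatially near $\Gamma_v=\{x=vt\}$ and in frequency near $\pm\xi_v$, with spatial width $\sim t^{1/\m}$ matched to the dispersive spreading. Testing the solution against $\Psi_v$ produces an ODE-type profile $\dea(t,v) := \langle u(t),\Psi_v(t)\rangle$. Differentiating in $t$ and using \eqref{hKdV}, the linear terms nearly cancel by the choice of packet (the packet travels along the Hamiltonian ray), and one is left with
\[
\dt \dea(t,v) = (\text{main nonlinear term}) + (\text{error}),
\]
where the main term is a constant multiple of $t^{-1}$ times a cubic-type (here $\m$-th power) expression in $\dea$, giving the logarithmic phase correction. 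Concretely I expect $\dt \dea = i c\, t^{-1} |\dea|^{\m-1}\dea + \err$ in the oscillatory regime, which integrates to an asymptotic profile with a $\log t$-phase; this is where the function $W(\xi)$ in the oscillatory/frequency regions comes from.

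The second ingredient is the bootstrap. Assume \eqref{est:u_infty} with a slightly worse constant on $[1,T]$; the pointwise bounds plus the $Q_0$-decay estimate of Sidi et al. give, after an $L^2$-energy estimate combined with the Duhamel formula for $\lp(-t)u$, the bound $\|\lp(-t)u(t)\|_{\wS^{(\m-1)/2\m}} \lesssim \eps + \eps^{\m-1}\log t$, which controls the solution up to $t\sim\exp(\eps^{-\m+1})$ — but the gain from testing by wave packets is that the profile $\dea(t,v)$ is shown to converge (in the relevant weighted norms), so the pointwise decay \eqref{est:u_infty} is recovered uniformly in $t$, closing the bootstrap globally. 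Here I would split into the three regions according to $t^{-1/\m}|x|$ versus $t^{(\m-1)\rho}$: in $\Xn$ one uses the rapid decay of the packet / integration by parts in the non-stationary regime to get the stated $L^\infty$ and $L^2$ bounds; in $\Xz$ one compares $u$ with the self-similar profile $t^{-1/\m}Q(t^{-1/\m}x)$, where $Q$ solves \eqref{eq:self} — existence and smallness of $Q$ I would get by a fixed-point argument on the ODE $||d/dy|^{\m-1}Q - yQ = \m Q^{\m}$ perturbing off $Q_0\int u_0$; in $\Xp$ one uses the stationary phase expansion together with the convergence of $\dea(t,v)$ to produce $W$ and the reality constraint $W(\xi)=\ol{W(-\xi)}$ from realness of $u$.

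The main obstacle will be controlling the error terms in the wave-packet testing argument uniformly over the full velocity/frequency range and over all three regions simultaneously — in particular, handling the transition zones $t^{-1/\m}|x|\sim t^{(\m-1)\rho}$ where neither stationary phase nor the self-similar description is cleanly valid, and quantifying the error $\err$ in $\dt\dea$ so that it is integrable in $t$ after dividing by $t$ (i.e. genuinely $o(t^{-1})$ in the right norm, not merely $O(t^{-1})$). This requires carefully exploiting the low regularity $s=\tfrac{\m-1}{2\m}$: the weight $\langle t^{-1/\m}x\rangle^{-k/(\m-1)+(\m-2)/2(\m-1)}$ in \eqref{est:u_infty} is exactly tuned so that $\m-2$ derivatives remain controlled, and the bilinear/multilinear estimates for $\dx(u^{\m})$ must be done in these weighted spaces, presumably via a Littlewood–Paley decomposition $\PN$ and separate treatment of the self-similar, elliptic (decaying), and hyperbolic (oscillatory) frequency pieces $\uel$, $\ue$, $\uh$. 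A secondary difficulty is the local well-posedness and persistence of regularity at this exact Sobolev exponent, which I would handle by the Kenig–Ponce–Vega smoothing-and-maximal-function machinery cited in the introduction, upgraded to the weighted space $\wS^{(\m-1)/2\m}$.
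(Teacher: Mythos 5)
Your overall framework (wave-packet testing of $u$ against a packet $\Psi_v$ along rays, a bootstrap on the linear pointwise decay, and a three-region analysis) is indeed the paper's framework, but there is a concrete wrong step at the heart of your asymptotics. You predict an ODE of the form $\dt\gamma = ic\,t^{-1}|\gamma|^{\m-1}\gamma+\text{error}$ and a logarithmic phase correction, and you propose to read off $W$ from that log-modified profile. For $\m\ge 4$ this is not what happens, and it contradicts the very statement you are proving: the theorem asserts $u\approx \Re\{W e^{i\phi}\}$ with the \emph{unmodified} linear phase \eqref{phase} and $\wh u(t,\xi)\approx W(\xi)e^{\frac{1}{\m}it\xi^{\m}}$. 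In the paper (Proposition \ref{prop:gamma_decay}) the nonlinear contribution to $\dot\gamma$, namely $-\int u^{\m}\dx\ol{\Psi_v}\,dx$, is bounded under the bootstrap by $\eps\, t^{-1}(t^{\frac{\m-1}{\m}}v)^{-\frac{\m(\m-3)}{2(\m-1)}}$; because $\m\ge 4$ this gains a power of $t^{\frac{\m-1}{\m}}v$, so along each ray $\dot\gamma$ is $O(t^{-1-\delta})$ and hence integrable. Thus $\gamma(t,v)$ simply converges, $W(\xi)=\sqrt{\m-1}\,\gamma(1,\xi^{\m-1})$ up to a convergent correction, and no resonance/log-phase analysis occurs (inserting one would give the wrong asymptotics). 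The criticality manifests only in the self-similar region, where the limit profile solves the nonlinear ODE \eqref{eq:self}; moreover the paper obtains $Q$ as the limit of the rescaled solution $U(t,y)=t^{\frac{1}{\m}}u(t,t^{\frac{1}{\m}}y)$ (controlling $\dt U$ through $\Lambda u$), which is what yields the stated convergence rates in $\Xz$ — your fixed-point construction of $Q$ from the ODE would still leave open why $u$ converges to that particular $Q$.

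The second gap is the mechanism that closes the energy side of the bootstrap. You propose an $L^2$ energy estimate plus Duhamel for the profile $\lp(-t)u$ in $\wS^{\frac{\m-1}{2\m}}$, with a claimed bound $\eps+\eps^{\m-1}\log t$. Working directly with $\J=\lp(t)x\lp(-t)$ does not close, because $\J$ interacts badly with $\dx(u^{\m})$; the paper instead uses the scaling vector field $\Lambda=\dx^{-1}(\m t\dt+x\dx+1)$, for which the exact identity $\Lc\Lambda u=\m u^{\m-1}\dx\Lambda u$ holds, so Gronwall gives $\|u(t)\|_X\lesssim\eps t^{\eps}$ (Lemma \ref{lem:energy}). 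In fact the weighted profile norm genuinely grows polynomially: $\J u=\Lambda u-\m t u^{\m}$ forces $\|\J u\|_{L^2}\lesssim\eps t^{\frac{1}{2\m}}$ and no better in general (Lemma \ref{lem:Xtilde}), so your logarithmic bound on the full $\wS^{\frac{\m-1}{2\m}}$ norm of the profile is not attainable; the decay estimates of Section \ref{S:KS_type} are stated in terms of the $\wt X$ norm with compensating factors $t^{-\frac{1}{2\m}}$ precisely to absorb this growth. Without these two ingredients — the $\Lambda$-based energy estimate and the time-integrability of $\dot\gamma$ — your bootstrap does not close, and the asymptotic description in $\Xp$ and $\Xf$ comes out with a spurious phase.
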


As \eqref{hKdV} has time reversal symmetry given by $u(t,x)  \mapsto u(-t,-x)$, we get the corresponding asymptotic behavior as $t \rightarrow -\infty$.

Theorem \ref{thm} presents the leading asymptotic term not only in $L^{\infty}(\R)$, but also in $L^2(\R)$.
In addition, as with linear solutions, we divide the long-time behavior of the solution to \eqref{hKdV} into three distinct regions.
Moreover, Theorem \eqref{thm} says that there is a difference between $u$ and linear solutions in $\Xz$, while the leading term of $u$ in $\Xp$ behaves like a linear solution.

The regularity assumption $u_0 \in H^{\frac{2\m}{\m-1}}(\R)$ needs to show the existence of a local-in-time solution $u$ with $\lp (-t) u \in C([-T,T]; \wS^0(\R))$ (see Remark \ref{rmk:regularity} below).
In other words, regularity is no longer required in the proof of the global existence and asymptotic behavior.

\begin{rmk}
Because $Q$ satisfies \eqref{eq:self}, $\self (t,x) := t^{-\frac{1}{\m}} Q(t^{-\frac{1}{\m}} x)$ is a solution to \eqref{hKdV} with the initial datum $\self (0) = \int_{\R} u_0 (x) dx \delta_0$, where $\delta_0$ denotes the Dirac delta measure concentrated at the origin.
Moreover, by \eqref{selfest} below, we can roughly state that
\[
\| u(t) - \self (t) \|_{L^{\infty}(\R)} \lesssim \eps t^{-\frac{1}{\m}-\eps}.
\]
If $\int_{\R} u_0(x)dx=0$, then the self-similar solution vanishes, and the solution $u$ to \eqref{hKdV} decays faster than $t^{-\frac{1}{\m}}$.
Accordingly, the nonlinearity of \eqref{hKdV} is not critical in the long-time behavior in this case.
\end{rmk}

\begin{rmk} \label{rmk:shp}
Theorem \eqref{thm} is also true for short-range perturbations of the form
\begin{equation} \label{hKdVp}
\dt u + \frac{1}{\m} |\dx|^{\m-1} \dx u =  \dx (u^{\m} + F(u)),
\end{equation}
where there exists a real number $\p>\m$ such that $F \in C^3(\R)$ and
\[
\left| \frac{d^j}{du^j} F(u) \right| \lesssim |u|^{\p-j}
\]
for $j=0,1,2,3$.
In fact, if $u_0 \in \wS^{\frac{\p-1}{2\p}}(\R)$ with $\| u_0 \|_{\wS^{\frac{\p-1}{2\p}}} \le \eps$, then there exists a unique global solution $u$ to \eqref{hKdVp} with $\lp (-t) u \in C \big( \R; \wS^{\frac{\p-1}{2\p}}(\R) \big)$ satisfying the estimate \eqref{est:u_infty}.
Moreover, the global solution has the same asymptotic behavior as in Theorem \ref{thm}, as long as $\rho = \frac{1}{\m} (\frac{1}{2\m}-\eps)$ is replaced with $\wt{\rho} := \frac{1}{\m}(\frac{1}{2\m}-\dea-\eps)$, where $\dea :=\max (\frac{2\m+1-2\p}{2\m},0) <\frac{1}{2\m}$.
For completeness, we briefly outline these modifications in Appendix \ref{A:hKdVp}.
\end{rmk}

\subsection{Outline of the proof} \label{outliine}

We give an outline of the proof of Theorem \ref{thm}.
Let $\Lc$ denote the linear operator of \eqref{hKdV}:
\[
\Lc := \dt + \frac{1}{\m} |\dx|^{\m-1} \dx.
\]
To obtain pointwise estimates of solutions, we use the ``vector field''
\[
\J := \lp (t) x \lp (-t)
= x-t |\dx|^{\m-1}.
\]
However, $\J$ does not behave well with respect to the nonlinearity, so as in \cite{HayNau98, HayNau99, HarG16} we instead work with
\[
\Lambda := \dx^{-1} (\m t \dt + x \dx + 1).
\]
Here, $\Sc := \m t \dt + x \dx + 1$ is the generator of the scaling transformation for \eqref{hKdV}:
\begin{equation} \label{scaling}
u (t,x) \mapsto \lambda u(\lambda^{\m} t, \lambda x)
\end{equation}
for any $\lambda >0$.
Moreover, $\Sc$ is related to $\Lc$ and $\J$ as follows:
\begin{equation} \label{eq:SLJ}
\Sc = \m t \Lc + \J \dx + 1.
\end{equation}
We introduce the norm with respect to the spatial variable as follows:
\[
\| u (t) \|_{X} := \left( \| u (t) \|_{H^{\frac{\m-1}{2\m}}}^2 + \| \Lambda u (t) \|_{L^2}^2 \right)^{\frac{1}{2}}.
\]
We note that
\[
\| u_0 \|_{X} \sim \| u_0 \|_{H^{\frac{\m-1}{2\m}}} + \| x u _0 \|_{L^2} \sim  \| u_0 \|_{\wS^{\frac{\m-1}{2\m}}}.
\]

Well-posedness in $X$ follows from a similar argument as in \cite{KPV93}.

\begin{prop} \label{prop:WP1}
Let $\m \ge 4$.
If $u_0 \in \wS^{\frac{\m-1}{2\m}} (\R)$, then there exists a time $T=T \big( \| u_0 \|_{\wS^{\frac{\m-1}{2\m}}} \big) >0$ and a unique solution $u \in C([-T,T]; X)$ to \eqref{hKdV} satisfying
\[
\sup _{t \in [-T,T]} \| u(t) \|_{X} \lesssim \| u_0 \|_{\wS^{\frac{\m-1}{2\m}}}.
\]
Moreover, the flow map $u_0 \in \wS^{\frac{\m-1}{2\m}} (\R) \mapsto u \in C([-T,T]; X)$ is locally Lipschitz continuous.
\end{prop}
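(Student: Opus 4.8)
The plan is to run a contraction-mapping argument in the style of \cite{KPV93}, enlarged so that the vector field $\Lambda$ is propagated along with the solution. First I would recast \eqref{hKdV} in Duhamel form,
\[
u(t)=\lp(t)u_0+\int_0^t\lp(t-t')\,\dx\bigl(u(t')^{\m}\bigr)\,dt',
\]
and build a resolution space on $[-T,T]$ out of the standard dispersive norms for the $\m$-th order homogeneous propagator $\lp(t)=e^{-\frac{1}{\m}t|\dx|^{\m-1}\dx}$: the energy norm $L^{\infty}_T H^{\frac{\m-1}{2\m}}_x$, the local smoothing norm $\|D_x^{\frac{\m-1}{2\m}+\frac{\m-1}{2}}u\|_{L^{\infty}_xL^2_T}$ (the $\frac{\m-1}{2}$-derivative gain of $\m$-th order dispersion), a maximal-function norm of the form $\|u\|_{L^q_xL^{\infty}_T}$, and an auxiliary Strichartz norm. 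All the relevant linear estimates — homogeneous and dual inhomogeneous local smoothing, the maximal-function bound, and Strichartz — hold for the homogeneous symbol $|\xi|^{\m-1}\xi$ by the oscillatory-integral methods of \cite{KPV91,KPV93} (see also \cite{KPV89,Gru10}). The single derivative in the nonlinearity $\dx(u^{\m})$ is absorbed by one local-smoothing gain, since $\frac{\m-1}{2}\ge1$ for $\m\ge3$; the remaining $\m$ copies of $u$ are distributed over the maximal-function, Strichartz and energy norms via H\"older, which also yields a positive power $T^{\theta}$ in time. Hence the Duhamel map is a contraction on a suitable ball once $T=T(\|u_0\|_{\wS^{\frac{\m-1}{2\m}}})$ is small. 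The a priori bound $\sup_t\|u(t)\|_X\lesssim\|u_0\|_{\wS^{\frac{\m-1}{2\m}}}$, the uniqueness statement (first in the ball, then in all of $C([-T,T];X)$ by the usual propagation-of-uniqueness argument), and the Lipschitz dependence on $u_0$ then all follow from this contraction together with the corresponding difference estimate.

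The one genuinely new point is the control of $\Lambda u$. Because $\Lambda=\dx^{-1}\Sc$ we have $\Sc u=\dx(\Lambda u)$ and $(\Lambda u)|_{t=0}=x u_0\in L^2(\R)$; moreover, combining \eqref{eq:SLJ} with the commutation $[\Sc,\Lc]=-\m\Lc$ (the infinitesimal version of the scaling symmetry \eqref{scaling}) and the Leibniz-type identity $\Sc(u^{\m})=\m u^{\m-1}\Sc u-(\m-1)u^{\m}$, a short computation gives
\[
\Lc(\Lambda u)=\m\, u^{\m-1}\,\dx(\Lambda u).
\]
Thus $v:=\Lambda u$ solves a linear equation in $v$ with coefficient $\m u^{\m-1}$ determined by $u$. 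I would therefore run the fixed point for the pair $(u,v)$ simultaneously — or, equivalently, solve for $u$ first and then solve the linear problem for $v$ — tracking $v$ in the same family of norms as $u$, but only at Sobolev regularity $0$, since only $\|v(0)\|_{L^2}$ is assumed.

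The derivative falling on $v$ in the forcing term $\m u^{\m-1}\dx v$ is treated exactly as the $\dx$ in $\dx(u^{\m})$: after the integration by parts
\[
u^{\m-1}\dx v=\dx\bigl(u^{\m-1}v\bigr)-(\m-1)u^{\m-2}(\dx u)\,v,
\]
the derivative in the first term is absorbed by the Duhamel smoothing gain, while in the second term $\dx u$ is controlled by the local smoothing norm of $u$, the powers of $u$ by the maximal-function norm, and $v$ by the energy and smoothing norms; matching the resulting H\"older exponents in $x$ and $t$ is where the derivative budget is tight. The difference estimate for $v_1-v_2$, whose equation also carries the term $\m(u_1^{\m-1}-u_2^{\m-1})\dx v_2$, is entirely analogous and yields the Lipschitz continuity of $u_0\mapsto u$ as a map into $C([-T,T];X)$; time continuity with values in $X$ follows from the strong continuity of $\lp(t)$ on $H^{\frac{\m-1}{2\m}}$ and on $L^2$ together with the continuity of the Duhamel terms in those spaces.

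I expect the main obstacle to be precisely this balancing act for the $\Lambda$-equation: unlike $u$, the field $\Lambda u$ carries no spare Sobolev regularity, so the full derivative $\dx(\Lambda u)$ in its forcing term must be absorbed entirely through the local-smoothing mechanism, and the auxiliary exponents (the maximal-function index $q$ and the Strichartz pair) must be chosen so that the nonlinear estimates close simultaneously for $u$ and for $\Lambda u$ — the derivative budget is generous for $\m\ge4$ and exactly tight when $\m=3$. There is no deeper difficulty, however, because the regularity $s=\frac{\m-1}{2\m}$ lies strictly between the scaling-critical exponent $-\frac12$ for \eqref{hKdV} and $\frac12$, so the local theory is subcritical.
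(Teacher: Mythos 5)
Your proposal follows essentially the same route as the paper's Appendix \ref{wp}: a Kenig--Ponce--Vega-type contraction in mixed $L^p_xL^q_T$ norms (maximal function, local smoothing, and their interpolates) for the pair $(u,\Lambda u)$, with $\Lambda u$ propagated at $L^2$ regularity through the same identity $\Lc\Lambda u=\m\,u^{\m-1}\dx\Lambda u$ and data $(\Lambda u)(0)=xu_0$. The only tactical difference is that the paper neither integrates by parts in the $\Lambda u$-equation nor invokes a retarded smoothing estimate: it keeps $u^{\m-1}\dx\Lambda u$ intact, absorbs the full derivative through the interpolated norm $\|\dx\Lambda u\|_{L_x^{\frac{2(2\m-1)}{\m-3}}L_T^{\frac{2(2\m-1)}{5}}}$ built into $Y_T$ (Lemma \ref{lem:mfls} with $\al=1$), and replaces your maximal-function bookkeeping for the factors $u^{\m-1}$ by the Sobolev-type Lemma \ref{lem:nonestq}, estimating the nonlinearity simply in $L^1_TL^2_x$ with a spare factor $T^{\frac12}$.
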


We give a proof of this well-posedness result in Appendix \ref{wp}.

\begin{rmk} \label{rmk:regularity}
Because \eqref{eq:SLJ} implies
\begin{equation} \label{eq:Ju}
\J u
= \Lambda u - \m t \dx^{-1} \Lc u
= \Lambda u - \m t u^{\m},
\end{equation}
the regularity $u_0 \in H^{\frac{\m-1}{2\m}} (\R)$ ensures $\J u \in C \left( [-T,T]; L^2(\R) \right)$, which is equivalent to $\lp (-t) u \in C \left( [-T,T]; \wS^0(\R) \right)$.
\end{rmk}

For initial data $u_0$ with $\| u_0 \|_{\wS^{\frac{\m-1}{2\m}}} \ll 1$, we can find an existence time $T>1$ and a unique solution $u \in C([-T,T]; X)$ to \eqref{hKdV}.
Moreover, Proposition \ref{prop:WP1} says that the existence of a global solution $u \in C(\R;X)$ follows from the decay estimate \eqref{est:u_infty}.

Because  \eqref{hKdV} is time reversal invariant, it suffices to consider the case $t \ge 0$.
We then make the bootstrap assumption that $u$ satisfies the linear pointwise estimates:
there exists a constant $D$ with $1 \ll D \ll \eps^{-\frac{1}{2}}$ such that
\begin{equation} \label{est:u_infty0}
\left\| \lr{t^{-\frac{1}{\m}}x}^{-\frac{k}{\m-1}+\frac{\m-2}{2(\m-1)}} \dx^k u(t) \right\|_{L^{\infty}} \le D \eps t^{-\frac{k+1}{\m}}
\end{equation}
for $t \in [1,T]$ and $k=0,1,\dots, \m-2$.

In \S \ref{S:energy}, under this assumption, for $\eps>0$ sufficiently small, we have the energy estimate
\begin{equation*}
\sup_{0 \le t \le T} \| u(t) \|_{X} \le C \eps \lr{T}^{\eps},
\end{equation*}
where $C$ is a constant independent of $D$ and $T$.
To complete the proof of global existence, we need to close the bootstrap estimate \eqref{est:u_infty0}.

In \S \ref{S:KS_type}, we prove decay estimates in $L^{\infty}(\R)$ and $L^2(\R)$ that allow us to reduce closing the bootstrap argument to considering the behavior of $u$ along the ray $\Gamma_v$.
We also observe that \eqref{est:u_infty0} holds true at $t=1$.

To close the bootstrap argument, we use the method of testing by wave packets as in \cite{HarG16, HIT, IfrTat15, Okapre}.
Here, a wave packet is an approximate solution localized in both space and frequency on the scale of the uncertainty principle.
Our main task in \S \ref{S:wave_packet} is to construct a wave packet  $\Psi_v(t,x)$ to the corresponding linear equation and observe its properties.
Here, to show that $\Psi_v(t,x)$ is a good approximate solution to the linear solution, we use the fact that $\m$ is a natural number.
Because $\Psi_v(t,x)$ is essentially frequency localized near $\xi_v = v^{\frac{1}{\m-1}}$ (see Lemma \ref{lem:freq_psi}), the linear operator $\Lc$ acts on $\Psi_v$ as $\dt + i(-i\dx)^{\m}$.
Hence, we can avoid applying the nonlocal operator $|\dx|$ directly in the calculation of $\Lc \Psi_v$ (see \eqref{eq:LPsi}).

To observe decay of $u$ along the ray $\Gamma_v$, we use the function
\begin{equation} \label{output}
\gamma (t,v) := \int_{\R} u(t,x) \ol{\Psi_v (t,x)} dx.
\end{equation}
In \S \ref{S:wave_packet}, we also prove that $\gamma$ is a reasonable approximation of $u$.
We then reduce closing the bootstrap estimate \eqref{est:u_infty0} to proving global bounds for $\gamma$.

In \S \ref{S:proof}, we show that $\gamma$ is the leading asymptotic term of $u$ in $\Xp$.
Moreover, we prove existence of a solution $Q$ to \eqref{eq:self}.

\subsection{Notation}
We summarize the notation used throughout this paper.
We set $\N_0 := \N \cup \{ 0 \}$.
We denote the space of all rapidly decaying functions on $\R$ by $\mathcal{S}(\R)$.
We define the Fourier transform of $f$ by $\mathcal{F}[f]$ or $\wh{f}$.

In estimates, we use $C$ to denote a positive constant that can change from line to line.
If $C$ is absolute or depends only on parameters that are considered fixed, then we often write $X \lesssim Y$, which means $X \le CY$.
When an implicit constant depends on a parameter $a$, we sometimes write $X \lesssim_{a} Y$.
We define $X \ll Y$ to mean $X \le C^{-1} Y$ and $X \sim Y$ to mean $C^{-1} Y \le X \le C Y$.
We write $X = Y + O(Z)$ when $|X-Y| \lesssim Z$.

Let $\sigma$ be a smooth even function with $0 \le \sigma \le 1$ and $\sigma (\xi ) = \begin{cases} 1, & \text{if } |\xi | \le 1, \\ 0, & \text{if } |\xi| \ge 2.\end{cases}$
For any $R, \, R_1, R_2 >0$ with $R_1 < R_2$, we set
\begin{gather*}
\sigma _{\le R} (\xi) := \sigma \left( \frac{\xi}{R} \right), \quad
\sigma _{>R} (\xi) := 1- \sigma_{\le R} (\xi), \\
\sigma _{<R}(\xi) := \sigma_{\le \frac{R}{2}}(\xi), \quad
\sigma _{\ge R}(\xi) := 1-\sigma_{<R}(\xi), \quad
\sigma _R (\xi) := \sigma_{\le R}(\xi) - \sigma_{<R}(\xi), \\
\sigma_{R_1 \le \cdot \le R_2} (\xi) := \sigma _{\le R_2}(\xi) - \sigma_{<R_1}(\xi), \quad
\sigma_{R_1 < \cdot < R_2} (\xi) := \sigma _{<R_2}(\xi) - \sigma_{\le R_1}(\xi).
\end{gather*}
For any $N, \, N_1, N_2 \in 2^{\mathbb{Z}}$ with $N_1 < N_2$, we define
\[
P_N f := \mathcal{F}^{-1} [\sigma_{N} \wh{f}] , \quad
P_{N_1 \le \cdot \le N_2} f := \mathcal{F}^{-1}[\sigma_{N_1 \le \cdot \le N_2} \wh{f}] .
\]
We denote the characteristic function of an interval $I$ by $\bm{1}_{I}$.
For $N \in 2^{\mathbb{Z}}$, we define
\[
P^{\pm} f := \mathcal{F}^{-1} [ \bm{1}_{\R_{\pm}} \wh{f}], \quad
P_N^{\pm} := P^{\pm} P_N.
\]


\section{Energy estimates} \label{S:energy}

We show energy estimates for solutions $u$ to \eqref{hKdV} under \eqref{est:u_infty0}.

\begin{lem} \label{lem:energy}
Assume $\m \ge 4$.
Let $u$ be a solution to \eqref{hKdV} in a time interval $[0,T]$ satisfying
\[
\| u_0 \|_{\wS^{\frac{\m-1}{2\m}}} \le \eps \ll 1
\]
and \eqref{est:u_infty0}.
Then,
\[
\| u(t) \|_{X} \lesssim \eps \lr{t}^{\eps},
\]
where the implicit constant is independent of $D$, $T$, and $\eps$.
\end{lem}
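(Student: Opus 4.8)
The plan is to estimate separately the three pieces of
\[
E(t) := \|u(t)\|_{L^2}^2 + \big\| |\dx|^{\frac{\m-1}{2\m}} u(t) \big\|_{L^2}^2 + \| \Lambda u(t) \|_{L^2}^2 \sim \| u(t) \|_X^2
\]
and to close by Gronwall's inequality on $[1,T]$; on $[0,1]$ the bound $\|u(t)\|_X \lesssim \eps$ is already supplied by Proposition \ref{prop:WP1}, whose existence time exceeds $1$ for data of size $\eps \ll 1$, so in particular $\|u(1)\|_X \lesssim \eps$. The $L^2$ piece is free: pairing \eqref{hKdV} with $u$ and using that $|\dx|^{\m-1}\dx$ is skew-adjoint while $\int_{\R} \dx(u^{\m})\,u\,dx = -\tfrac{1}{\m+1}\int_{\R}\dx(u^{\m+1})\,dx = 0$, the momentum $\|u(t)\|_{L^2} = \|u_0\|_{L^2} \le \eps$ is conserved. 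It remains to bound the $\dot H^{\frac{\m-1}{2\m}}$ and $\Lambda$ pieces for $t \ge 1$.

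For the $\Lambda$ piece I first record a linearized-equation identity for $w := \Lambda u$. Since $\Sc$ generates the scaling \eqref{scaling} one has $[\Sc,\Lc] = -\m\Lc$, and since $x\dx + \m t\dt$ is a derivation, $(\Sc-1)(u^{\m}) = \m u^{\m-1}(\Sc-1)u$ while $\Sc\dx = \dx(\Sc-1)$; combining these with \eqref{hKdV} gives $\Lc(\Sc u) = \m\dx(u^{\m-1}\Sc u)$, hence, applying $\dx^{-1}$ and using $\Sc u = \dx w$,
\[
\Lc w = \m\, u^{\m-1}\, \dx w .
\]
Then, by skew-adjointness of $|\dx|^{\m-1}\dx$ and one integration by parts,
\[
\frac12 \frac{d}{dt} \| w(t) \|_{L^2}^2 = \m \int_{\R} u^{\m-1}(\dx w)\, w\, dx = -\frac{\m(\m-1)}{2} \int_{\R} u^{\m-2}(\dx u)\, w^2\, dx ,
\]
so that, using the bootstrap bounds \eqref{est:u_infty0} with $k=0,1$ (the weights there are $\ge 1$ when $\m \ge 4$), $\|u(t)\|_{L^\infty}\le D\eps\, t^{-1/\m}$ and $\|\dx u(t)\|_{L^\infty}\le D\eps\, t^{-2/\m}$, whence
\[
\left| \frac{d}{dt} \| w(t) \|_{L^2}^2 \right| \lesssim_{\m} \|u(t)\|_{L^\infty}^{\m-2} \|\dx u(t)\|_{L^\infty} \| w(t) \|_{L^2}^2 \lesssim (D\eps)^{\m-1}\, t^{-1}\, \| w(t) \|_{L^2}^2 .
\]

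For the $\dot H^{s}$ piece, $s := \frac{\m-1}{2\m}$, apply $|\dx|^s$ to \eqref{hKdV} and pair with $|\dx|^s u$; skew-adjointness again kills the linear term and leaves $\tfrac12\frac{d}{dt}\||\dx|^s u\|_{L^2}^2 = \int_{\R} |\dx|^s\dx(u^{\m})\,|\dx|^s u\, dx$. Integrating $\dx$ by parts and writing $|\dx|^s(u^{\m}) = u^{\m-1}|\dx|^s u + (\text{commutator})$, the resonant term is $\tfrac{\m-1}{2}\int_{\R} u^{\m-2}(\dx u)(|\dx|^s u)^2\, dx$, which is bounded by $(D\eps)^{\m-1}t^{-1}\||\dx|^s u\|_{L^2}^2$ exactly as above. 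The commutator term is treated by a Littlewood--Paley decomposition together with standard fractional Leibniz / Kato--Ponce commutator estimates: since $s < 1$ the $X$-norm does not control $\dot H^{s+1}$, so the extra derivative must be redistributed across factors, and the $\m-1$ factors of $u$ not carrying the $|\dx|^s$-weight are placed in $L^\infty$ via \eqref{est:u_infty0} (using, for $\m \ge 4$, also the pointwise bounds on $\dx^k u$ for $k\le 2$), producing the net bound $\lesssim (D\eps)^{\m-1}t^{-1}\|u(t)\|_X^2$.

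Adding the three contributions gives $\frac{d}{dt}E(t) \lesssim_{\m} (D\eps)^{\m-1} t^{-1} E(t)$ on $[1,T]$, so Gronwall yields $E(t) \lesssim E(1)\, t^{C(D\eps)^{\m-1}} \lesssim \eps^2\, t^{C(D\eps)^{\m-1}}$ with $C = C(\m)$ independent of $D$, $T$, $\eps$. Since $1 \ll D \ll \eps^{-\frac12}$ forces $(D\eps)^{\m-1} \ll \eps^{\frac{\m-1}{2}} \le \eps^{\frac32}$ when $\m \ge 4$, the exponent is $\le 2\eps$ once $\eps$ is small (depending only on $\m$), which together with the bound on $[0,1]$ gives $\|u(t)\|_X \lesssim \eps \lr{t}^{\eps}$ with a constant independent of $D$, $T$, $\eps$. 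The formal manipulations (differentiating the $L^2$ norms, the skew-adjointness cancellations, the integrations by parts, the use of $\dx^{-1}$ via \eqref{eq:Ju}) are carried out for the smoother solutions provided by the local theory and then passed to the limit by the continuity of the flow map in Proposition \ref{prop:WP1}.

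The main obstacle is the commutator term in the $\dot H^{s}$ estimate. Because $s = \frac{\m-1}{2\m} < 1$, one cannot afford to move the surplus derivative in $|\dx|^s\dx(u^{\m})$ onto $|\dx|^s u$, and the estimate closes only thanks to the cancellation $\int_{\R}|\xi|^s(i\xi)|\wh{u}(\xi)|^2\, d\xi = 0$ (the same one underlying conservation of momentum), which must be extracted by a careful commutator argument before the dispersive $L^\infty$ decay of the remaining factors is used. The surviving coefficient $\|u\|_{L^\infty}^{\m-2}\|\dx u\|_{L^\infty} \sim t^{-1}$ sits exactly at the non-integrable borderline, which is why one obtains the slowly growing bound $\lr{t}^{\eps}$ rather than a uniform one -- the signature of the critical nonlinearity.
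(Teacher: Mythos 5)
Your proposal is correct and follows essentially the same route as the paper: conservation of $\|u\|_{L^2}$, a Kato--Ponce commutator argument combined with the bootstrap pointwise bounds \eqref{est:u_infty0} for $k=0,1$ (whose weights are $\ge 1$ precisely because $\m\ge 4$) to produce the critical coefficient $(D\eps)^{\m-1}t^{-1}$, the identity $\Lc \Lambda u=\m u^{\m-1}\dx \Lambda u$ obtained from the commutation relations of $\Sc$ with $\Lc$ and $\dx$, and Gronwall together with $(D\eps)^{\m-1}\ll\eps$. The only cosmetic difference is in the $\dot H^{\frac{\m-1}{2\m}}$ step: the paper applies the commutator estimate to $u^{\m-1}\dx u$ (keeping the derivative on a factor of $u$) so that no term involving $\dx|\dx|^{\frac{\m-1}{2\m}}u$ ever arises, which is the cleanest implementation of the derivative redistribution you describe.
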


To treat the fractional derivatives, we use the Kato-Ponce commutator estimate (see \cite{KatPon88, KPV93}):

\begin{lem} \label{lem:KP}
For $0<s<1$, we have
\[
\| |\dx|^s (fg) - f |\dx|^s g \|_{L^2}
\lesssim \| |\dx|^s f \|_{L^2} \| g \|_{L^{\infty}}.
\]
\end{lem}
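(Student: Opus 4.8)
This is the classical Kato--Ponce (Kenig--Ponce--Vega) fractional-derivative commutator estimate, so the plan is to reproduce a standard proof. The first step is a pointwise reduction. For $0<s<1$ the operator $|\dx|^{s}$ has the finite-difference representation
\[
|\dx|^{s}h(x)=c_{s}\int_{\R}\frac{h(x)-h(x-y)}{|y|^{1+s}}\,dy,\qquad c_{s}:=\Bigl(\int_{\R}\frac{1-\cos y}{|y|^{1+s}}\,dy\Bigr)^{-1}\in(0,\infty),
\]
which is checked on the Fourier side (the odd part of the integrand has vanishing principal value, and $\int_{\R}|y|^{-1-s}(1-\cos(y\xi))\,dy=c_{s}^{-1}|\xi|^{s}$). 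Taking $h=fg$ and $h=g$ and subtracting, the zeroth-order terms cancel, so
\[
|\dx|^{s}(fg)(x)-f(x)\,|\dx|^{s}g(x)=c_{s}\int_{\R}\frac{\bigl(f(x)-f(x-y)\bigr)\,g(x-y)}{|y|^{1+s}}\,dy.
\]
Bounding $|g(x-y)|\le\|g\|_{L^{\infty}}$ pointwise (and using $|f(x)-f(x-y)|\lesssim|y|^{s}$-type control to make the integral absolutely convergent for $f\in\Sc(\R)$, then density) reduces the lemma to the linear inequality
\[
\Bigl\|\int_{\R}\frac{|f(\cdot)-f(\cdot-y)|}{|y|^{1+s}}\,dy\Bigr\|_{L^{2}}\lesssim_{s}\||\dx|^{s}f\|_{L^{2}}.
\]
This is still \emph{not} elementary: moving the $L^{2}$ norm inside the $y$-integral loses it, since $\int_{\R}\|f-f(\cdot-y)\|_{L^{2}}|y|^{-1-s}\,dy$ is controlled only by the strictly stronger Besov norm $\|f\|_{\dot B^{s}_{2,1}}$. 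To keep the Sobolev norm one must retain the commutator structure and exploit frequency orthogonality, i.e. run a Littlewood--Paley/paraproduct argument on the commutator itself.

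The second step is the paraproduct decomposition. Write $f=\sum_{M}P_{M}f$, $g=\sum_{N}P_{N}g$ dyadically, so $fg=\sum_{M,N}(P_{M}f)(P_{N}g)$, and split the sum into the three regimes $M\ll N$ (low--high), $M\sim N$ (diagonal) and $M\gg N$ (high--low), applying $[\,|\dx|^{s},\,\cdot\,]$ term by term. In the two off-diagonal regimes the product $(P_{M}f)(P_{N}g)$ is frequency-localized at $\sim\max(M,N)$, so the pieces are almost orthogonal and it suffices to square-sum the individual block bounds. For $M\ll N$ I would use the symbol expansion $|\xi+\eta|^{s}-|\eta|^{s}=s|\eta|^{s-1}\sign(\eta)\,\xi+O\bigl((|\xi|/|\eta|)^{2}|\eta|^{s}\bigr)$ (with $|\xi|\sim M$, $|\eta|\sim N$) together with the Bernstein bound $\|\dx P_{M}f\|_{L^{2}}\sim M\|P_{M}f\|_{L^{2}}\sim M^{1-s}\||\dx|^{s}P_{M}f\|_{L^{2}}$ to bound each block by $\lesssim(M/N)^{1-s}\||\dx|^{s}P_{M}f\|_{L^{2}}\|g\|_{L^{\infty}}$. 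For $M\gg N$ I would estimate $\||\dx|^{s}(P_{M}f\cdot P_{<M}g)\|_{L^{2}}$ and $\|P_{M}f\cdot|\dx|^{s}P_{<M}g\|_{L^{2}}$ separately, each $\lesssim\||\dx|^{s}P_{M}f\|_{L^{2}}\|g\|_{L^{\infty}}$ by Bernstein. In both cases a Schur test with the geometric weights sums the $\ell^{2}$-valued block bounds to $\lesssim\||\dx|^{s}f\|_{L^{2}}\|g\|_{L^{\infty}}$.

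The \emph{main obstacle} is the diagonal term $\sum_{M}[\,|\dx|^{s},P_{M}f\,](\wt{P}_{M}g)$, where $\wt{P}_{M}$ is a suitably fattened dyadic projection: cancellation can push the output frequency of $(P_{M}f)(\wt{P}_{M}g)$ anywhere below $M$, so orthogonality in the output variable is lost. I would decompose once more according to the output frequency $N$ and sum over $M\gtrsim N$; the subtlety is that at genuinely low output frequency each of $|\dx|^{s}(P_{M}f\cdot\wt{P}_{M}g)$ and $P_{M}f\cdot|\dx|^{s}\wt{P}_{M}g$ is only of size $\sim M^{s}\|P_{M}f\|_{L^{2}}\|g\|_{L^{\infty}}$ individually, so the two must be handled together, using the smallness of the commutator symbol $|\xi+\eta|^{s}-|\eta|^{s}$ near the zero set of the output frequency --- equivalently, the $L^{\infty}$ endpoint of the Coifman--Meyer multilinear multiplier theory (in practice, a duality argument against the Hardy space $H^{1}$ combined with a Fefferman--Stein sharp maximal function estimate, which is exactly where a mere $L^{\infty}$ bound on $g$ suffices in place of a genuine $\dot H^{s}$-type norm). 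A final Schur summation in $(M,N)$ gives $\lesssim\|g\|_{L^{\infty}}\bigl(\sum_{M}\||\dx|^{s}P_{M}f\|_{L^{2}}^{2}\bigr)^{1/2}\sim\|g\|_{L^{\infty}}\||\dx|^{s}f\|_{L^{2}}$, which together with the off-diagonal contributions completes the proof. The whole argument lives in Euclidean harmonic analysis and is independent of the dispersive structure of \eqref{hKdV}, which is why the estimate is quoted here as a known tool.
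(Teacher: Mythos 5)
The paper gives no proof of this lemma at all: it is quoted as the classical Kato--Ponce commutator estimate with a pointer to \cite{KatPon88, KPV93}, and nothing downstream depends on how it is established. So there is no "paper proof" to match; what you have written is a sketch of the standard harmonic-analysis argument, and in outline it is the right one. The singular-integral representation of $|\dx|^s$ and the resulting commutator kernel are correct; your observation that a naive Minkowski bound only yields the strictly stronger $\dot B^{s}_{2,1}$ norm, so that the paraproduct structure must be retained, is exactly the right diagnosis; and the off-diagonal treatment (output-frequency orthogonality plus the $(M/N)^{1-s}$ symbol gain for low--high, Bernstein for high--low) is how the estimate is proved in \cite{KPV93} (Theorem A.12 there handles the symbol $(|\xi+\eta|^s-|\xi|^s-|\eta|^s)/|\xi|^s$ by precisely this kind of bilinear multiplier analysis). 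You also correctly isolate the diagonal term as the genuine difficulty.

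One assertion in that diagonal step is wrong as written and should be corrected. The commutator symbol $|\xi+\eta|^s-|\eta|^s$ is \emph{not} small near the zero set of the output frequency: on the block $|\xi|\sim|\eta|\sim M$ with $|\xi+\eta|\sim K\ll M$ it equals $-|\eta|^s+O(K^s)\approx-M^s$, i.e.\ it is of full size; only the piece $|\xi+\eta|^s$ is small there. The obstruction is entirely the resonant paraproduct $\sum_M P_Mf\cdot|\dx|^s\wt{P}_Mg$, each block of which is of size $M^s\|P_Mf\|_{L^2}\|g\|_{L^\infty}$ with no decay in the output frequency, so the sum over $M$ is a priori only an $\ell^1$ (Besov) sum rather than the needed $\ell^2$ (Sobolev) sum. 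The rescue is not symbol smallness but the mechanism you name in the following clause: writing $P_Mf=|\dx|^{-s}P_MF$ with $F=|\dx|^sf$ exhibits this sum as a high-high paraproduct of $F\in L^2$ against $g\in L^\infty\subset BMO$, which the Carleson embedding (equivalently the Fefferman--Stein sharp function, or the $L^\infty$/BMO endpoint of Coifman--Meyer) bounds in $L^2$. Since you do invoke the correct tool, this is a misattributed mechanism rather than a fatal gap; but note that your sketch does not prove the diagonal term so much as reduce one classical theorem to another, which is acceptable here only because the paper itself uses the lemma as a citation-level black box.
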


\begin{proof}[Proof of Lemma \ref{lem:energy}]
Because the desired bound for $0 \le t \le 1$ follows from Proposition \ref{prop:WP1}, we consider the case $t \ge 1$.

Integration by parts yields
\begin{align*}
\frac{1}{2} \dt \| u (t) \|_{L^2}^2
= \int_{\R} \dx (u^{\m}) u dx =0.
\end{align*}
By Lemma \ref{lem:KP} and \eqref{est:u_infty0}, we have
\begin{align*}
&\frac{1}{2} \dt \left\| |\dx|^{\frac{\m-1}{2\m}} u(t) \right\|_{L^2}^2 \\
&= \m \int_{\R} |\dx|^{\frac{\m-1}{2\m}} u \cdot |\dx|^{\frac{\m-1}{2\m}} (u^{\m-1} \dx u) dx \\
&= \m \int_{\R} |\dx|^{\frac{\m-1}{2\m}} u \cdot u^{\m-1} |\dx|^{\frac{\m-1}{2\m}} \dx u dx + O \left( \| u (t) \|_{H^{\frac{\m-1}{2\m}}}^2 \| u(t) \|_{L^{\infty}}^{\m-2} \| \dx u (t) \|_{L^{\infty}} \right) \\
&= - \frac{\m (\m-1)}{2} \int_{\R} u^{\m-2} \dx u \left( |\dx|^{\frac{\m-1}{2\m}} u \right)^2 dx + O \left( (D\eps)^{\m-1} t^{-1} \| u(t) \|_{H^{\frac{\m-1}{2\m}}}^2 \right) \\
& \lesssim (D\eps)^{\m-1} t^{-1} \| u(t) \|_{H^{\frac{\m-1}{2\m}}}^2.
\end{align*}
From $[ |\dx|^{\m-1} \dx, x] = \m |\dx|^{\m-1}$, a simple calculation yields
\[
[\Lc,\Sc ]= \m \Lc, \quad
[\Sc,\dx]=-\dx,
\]
which imply that for solutions $u$ to \eqref{hKdV}
\begin{equation} \label{eq:lambda}
\Lc \Lambda u
= \dx^{-1} (\Sc+\m) \Lc u
= \m u^{\m-1} \dx \Lambda u.
\end{equation}
From \eqref{eq:lambda}, integration by parts, and \eqref{est:u_infty0}, we obtain
\[
\frac{1}{2} \dt \| \Lambda u(t) \|_{L^2}^2
= - \frac{\m (\m-1)}{2} \int_{\R} u^{\m-2} \dx u (\Lambda u)^2 dx
\lesssim (D\eps)^{\m-1} t^{-1} \| \Lambda u(t) \|_{L^2}^2.
\]
From $(D\eps)^{\m-1} \ll \eps$, Gronwall's inequality with the above estimates implies
\[
\| u(t) \|_X
\le 10 \| u(1) \|_X t^{\eps}
\lesssim \eps t^{\eps}. \qedhere
\]
\end{proof}

We define the auxiliary space
\[
\| u(t) \|_{\wt{X}} := \| \J u(t) \|_{L^2} + t^{\frac{1}{\m}} \left\| \lr{t^{\frac{1}{\m}} \dx}^{-1} u(t) \right\|_{L^2}.
\]

\begin{lem} \label{lem:Xtilde}
Let $u$ be a solution to \eqref{hKdV} which satisfies $\| u_0 \|_{\wS^{\frac{\m-1}{2\m}}} \le \eps \ll 1$ and \eqref{est:u_infty0}.
Then, for $t \ge 1$, we have
\[
\| u(t) \|_{\wt{X}} \lesssim \eps t^{\frac{1}{2\m}},
\]
where the implicit constant is independent of $D$, $T$, and $\eps$
\end{lem}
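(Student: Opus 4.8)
The plan is to bound the two pieces of $\|u(t)\|_{\wt X}$ separately, using the energy estimate of Lemma \ref{lem:energy} as the main input, together with the identity \eqref{eq:Ju} relating $\J u$ to $\Lambda u$ and the nonlinearity.

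For the first piece, $\|\J u(t)\|_{L^2}$, I would start from \eqref{eq:Ju}, namely $\J u = \Lambda u - \m t u^{\m}$. The term $\|\Lambda u(t)\|_{L^2}$ is controlled by $\|u(t)\|_X \lesssim \eps \lr{t}^{\eps}$ from Lemma \ref{lem:energy}. For the nonlinear term I would write $\|t u^{\m}\|_{L^2} \lesssim t \|u(t)\|_{L^{\infty}}^{\m-1}\|u(t)\|_{L^2}$ and insert the bootstrap bound \eqref{est:u_infty0} with $k=0$: $\|u(t)\|_{L^\infty}\lesssim D\eps\, t^{-\frac1\m}$ (absorbing $\lr{t^{-1/\m}x}^{(\m-2)/2(\m-1)}$ harmlessly since that exponent is positive — actually one should be slightly careful, but the $L^\infty$ bound without the weight still holds near $x=0$ and the weight only helps for $|x|$ large; in fact $\|u(t)\|_{L^\infty}\lesssim D\eps t^{-1/\m}$ is immediate from \eqref{est:u_infty0} because the weight is $\geq 1$). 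This gives $t\|u^\m\|_{L^2}\lesssim t\cdot (D\eps)^{\m-1} t^{-(\m-1)/\m}\cdot \eps t^\eps = (D\eps)^{\m-1}\eps\, t^{\frac1\m+\eps}$. Since $\m\geq 4$ this is $\lesssim \eps\, t^{1/\m+\eps}$, which is worse than the claimed $\eps t^{1/2\m}$ when $\m\geq 3$; so one cannot be this crude and must exploit the weight in \eqref{est:u_infty0} more carefully, splitting the $L^2$ norm over the self-similar region $|x|\lesssim t^{1/\m}$ and its complement and using the decay $\lr{t^{-1/\m}x}^{-(\m-2)/2(\m-1)}$ of each factor of $u$ for large $|x|$. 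This is the step I expect to be the main obstacle: getting the $t$-power down from $1/\m$ to $1/2\m$ requires using the weighted pointwise bound for \emph{several} of the $\m$ factors in $u^\m$, not just one, and one must check that the resulting spatial integral of the product of weights converges.

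For the second piece, $t^{\frac1\m}\|\lr{t^{1/\m}\dx}^{-1} u(t)\|_{L^2}$, I would decompose $u$ into Littlewood-Paley pieces. On frequencies $|\xi|\gtrsim t^{-1/\m}$ the multiplier $t^{1/\m}\lr{t^{1/\m}\xi}^{-1}$ is $\lesssim |\xi|^{-1}\lesssim t^{1/\m}\cdot(\text{bounded})$, so that part is controlled by $\|u(t)\|_{L^2}\lesssim\eps$ times $t^{1/\m}$ — too big by itself, so again I need to use that on low frequencies $u$ is essentially $\J u$-controlled. More precisely, on frequencies $|\xi|\lesssim t^{-1/\m}$ I would use $\wh{u}(t,\xi) = \wh{u}(t,\xi)$ and write $u = \dx(\dx^{-1}u)$, relating $\dx^{-1}u$ to $\Lambda u$ and $\J u$ via $\Lambda u = \dx^{-1}\Sc u$ and \eqref{eq:Ju}; the point is that $\|\dx^{-1}P_{\leq t^{-1/\m}}u\|_{L^2}$ is essentially $\|P_{\leq t^{-1/\m}}\J u\|_{L^2}$ up to the nonlinear correction and a $t$-dependent factor coming from $\dx^{-1}(m t u^\m)$ being benign at low frequency. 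So this piece reduces to the $\J u$ bound already handled, plus a low-frequency estimate on the nonlinearity $\dx^{-1}P_{\leq t^{-1/\m}}(u^\m)$, which by Bernstein and the weighted $L^\infty$ bounds gains the needed powers of $t$.

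To organize the write-up I would (i) record the trivial high-frequency bound $t^{1/\m}\|\lr{t^{1/\m}\dx}^{-1}P_{>t^{-1/\m}}u\|_{L^2}\lesssim \|u\|_{L^2}\lesssim\eps\leq\eps t^{1/2\m}$; (ii) for low frequencies use $P_{\leq t^{-1/\m}}u = \dx P_{\leq t^{-1/\m}}\dx^{-1}u$ together with $\|\dx^{-1}u\|_{L^2}\lesssim \|\Lambda u\|_{L^2}+\m t\|u^\m\|_{L^2}\lesssim \|u\|_X + t\|u^\m\|_{L^2}$ (using $\Lambda u = \dx^{-1}\Sc u$ and the equation), then absorb the $\dx$ against $P_{\leq t^{-1/\m}}$ at a cost of $t^{-1/\m}$, killing the prefactor $t^{1/\m}$; (iii) for $\|\J u\|_{L^2}$ use \eqref{eq:Ju} and the weighted $L^2/L^\infty$ trade-off on $u^\m$ described above. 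The only genuinely delicate estimate throughout is the spatially-weighted bound on $\|t u^\m\|_{L^2}$, and I would isolate it as the key computation: split $\R=\{|x|\le t^{1/\m}t^{(\m-1)\rho}\}\cup\{\text{rest}\}$, on the first set bound $\m-1$ of the factors in $L^\infty$ (losing $t^{(\m-1)\rho}$-type factors that are absorbed into $t^\eps$) and one in $L^2$, on the second set use the decay of the weight $\lr{t^{-1/\m}x}^{-(\m-2)/2(\m-1)}$ for each of all $\m$ factors plus $\|u\|_{L^2}\le\eps$ after pulling out enough weights to make $\int\lr{t^{-1/\m}x}^{-p}dx\lesssim t^{1/\m}$ for the appropriate exponent $p>1$; tracking the exponents gives the net power $t^{1/2\m}$ (up to $t^\eps$, which is allowed since the final claim can be stated with $\lr{t}^{\eps'}$ slack or the $\eps$ in $\rho$ is chosen to soak it up). I expect matching powers exactly — i.e.\ confirming that the worst term really is $\lesssim\eps t^{1/2\m}$ and not $\eps t^{1/2\m+\eps}$ — to be the finicky part, resolved by the precise choice $\rho=\frac1\m(\frac1{2\m}-\eps)$.
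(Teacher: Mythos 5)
The first half of your plan is essentially the paper's argument and is sound: from \eqref{eq:Ju} one needs $t\|u^{\m}\|_{L^2}\lesssim \eps t^{\frac{1}{2\m}}$, and this follows by inserting the weighted bound \eqref{est:u_infty0} with $k=0$ for \emph{all} $\m$ factors and splitting at $|x|\sim t^{\frac{1}{\m}}$ (no $t^{(\m-1)\rho}$ splitting and no $t^{\eps}$ slack is needed): the outer integral $\int_{|x|\ge t^{1/\m}}\big(t^{-1/\m}|x|\big)^{-\frac{\m(\m-2)}{\m-1}}dx\sim t^{\frac{1}{\m}}$ converges because $\m\ge 4$, giving $\|u^{\m}\|_{L^2}\lesssim (D\eps)^{\m}t^{-1+\frac{1}{2\m}}\lesssim\eps t^{-1+\frac{1}{2\m}}$, and $\|\Lambda u\|_{L^2}\lesssim \eps t^{\eps}\le \eps t^{\frac{1}{2\m}}$ by Lemma \ref{lem:energy}.

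The genuine gap is in the second component, $t^{\frac{1}{\m}}\|\lr{t^{1/\m}\dx}^{-1}u(t)\|_{L^2}$. Your step (i) is false as stated: on the intermediate range $t^{-1/\m}\lesssim|\xi|\lesssim 1$ the symbol $t^{1/\m}\lr{t^{1/\m}\xi}^{-1}\sim|\xi|^{-1}$ is as large as $t^{1/\m}$, so the ``trivial'' bound on $P_{>t^{-1/\m}}u$ is $t^{1/\m}\|u\|_{L^2}$, not $\|u\|_{L^2}$. Your step (ii) is also unjustified: $\Lambda u=\dx^{-1}\Sc u$ means $\Lambda u = xu+\m t\,\dx^{-1}\dt u$, i.e.\ the antiderivative acts on $\dt u=\dx(\cdots)$, not on $u$; there is no inequality $\|\dx^{-1}u\|_{L^2}\lesssim\|\Lambda u\|_{L^2}+\m t\|u^{\m}\|_{L^2}$, and $\dx^{-1}P_{\le t^{-1/\m}}u$ cannot be controlled at all in general, since $\wh{u}(t,0)=\int u\,dx$ is a conserved, generically nonzero quantity, so $|\xi|^{-1}\wh{u}$ fails to be square integrable near $\xi=0$. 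More fundamentally, what the lemma requires is the decay $\|P_{\le t^{-1/\m}}u(t)\|_{L^2}\lesssim\eps t^{-\frac{1}{2\m}}$, and this is \emph{not} a fixed-time consequence of $\|u\|_{L^2}\lesssim\eps$, $\|\Lambda u\|_{L^2}\lesssim\eps t^{\eps}$, $\|\J u\|_{L^2}\lesssim\eps t^{\frac{1}{2\m}}$ (those only yield $\|P_{\le t^{-1/\m}}u\|_{L^2}\lesssim\eps$, which loses exactly the factor $t^{\frac{1}{2\m}}$). It is dynamic information: because $\dt u$ is an exact $x$-derivative, $\wh{u}$ is essentially frozen near $\xi=0$ and remains of size $\eps$ there, and the shrinking window $|\xi|\le t^{-1/\m}$ then produces the extra $t^{-\frac{1}{2\m}}$. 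The paper implements this by the self-similar change of variables $U(t,y)=t^{1/\m}u(t,t^{1/\m}y)$, the identity $\dt U=\frac{1}{\m}t^{-1}\partial_y\big((\Lambda u)(t,t^{1/\m}y)\big)$ (so that $\lr{\partial_y}^{-1}\partial_y$ is bounded and the Jacobian yields $t^{-\frac{1}{2\m}}$), and integration in time of $\dt\|\lr{\partial_y}^{-1}U(t)\|_{L^2_y}\lesssim\eps t^{-1-\frac{1}{2\m}+\eps}$; no analogue of this time-integration step appears in your proposal, so this half of the lemma is not established.
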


\begin{proof}
By \eqref{est:u_infty0}, we see that
\begin{align*}
\left\| u^{\m} \right\|_{L^2}
&\le (D \eps)^{\m} t^{-1} \left( \int_{|x| \le t^{\frac{1}{\m}}} dx + \int_{|x| \ge t^{\frac{1}{\m}}} \left( t^{-\frac{1}{\m}} |x| \right)^{-\m \frac{\m-2}{\m-1}} dx \right)^{\frac{1}{2}} \\
&\lesssim \eps t^{-1+\frac{1}{2\m}}.
\end{align*}
From \eqref{eq:Ju} and Lemma \ref{lem:energy}, we therefore have
\[
\| \J u(t) \|_{L^2}
\lesssim \| \Lambda u(t) \|_{L^2} + t \left\| u^{\m} \right\|_{L^2}
\lesssim \eps t^{\frac{1}{2\m}}.
\]

We use a self-similar change of variables by defining
\begin{equation} \label{selfsimilar}
U(t,y) := t^{\frac{1}{\m}} u(t,t^{\frac{1}{\m}}y).
\end{equation}
A direct calculation shows
\begin{equation} \label{eq:U}
\dt U(t,y)
= \frac{1}{\m} t^{-1+\frac{1}{\m}} (\Sc u)(t, t^{\frac{1}{\m}}y)
= \frac{1}{\m} t^{-1} \dy \left( (\Lambda u) (t,t^{\frac{1}{\m}}y) \right).
\end{equation}
Hence, we have
\begin{equation} \label{eq:selfbound}
\dt \left\| \lr{\dy}^{-1} U(t) \right\|_{L^2_y}
\lesssim t^{-1-\frac{1}{2\m}} \| \Lambda u (t) \|_{L^2_x}
\lesssim \eps t^{-1-\frac{1}{2\m}+\eps} .
\end{equation}
By integrating this with respect to $t$, we have
\[
\left\| \lr{\dy}^{-1} U (t) \right\|_{L^2_y} \lesssim \eps .
\]
From $\left\| \lr{\dy}^{-1} U (t) \right\|_{L^2_y} = t^{\frac{1}{2\m}} \left\| \lr{t^{\frac{1}{\m}} \dx}^{-1} u (t) \right\|_{L^2_x}$, we obtain the desired bound.
\end{proof}

\begin{rmk} \label{rmk:reqH2}
The estimate $\| u(t) \|_{\wt{X}} \lesssim \eps$ for $0<t<1$ holds true.
Indeed, by Proposition \ref{prop:WP1}, Remark \ref{rmk:regularity}, and a sufficiently small value of $\eps>0$, we have
\begin{align*}
\sup_{0 \le t \le 1} \| u(t) \|_{\wt{X}}
&\lesssim \sup_{0 \le t \le 1} \left( \| \Lambda u (t) \|_{L^2} + \| u(t)^{\m} \|_{L^2} + \| u (t) \|_{L^2} \right) \\
&\lesssim \sup_{0 \le t \le 1} \left( \| u(t) \|_X + \| u(t) \|_X^{\m} \right) \\
&\lesssim \eps.
\end{align*}
\end{rmk}

\section{Decay estimates} \label{S:KS_type}

In this section, we prove a number of estimates for $u$ without assuming \eqref{est:u_infty0}.

We divide $u$ into two parts on which $\J$ acts hyperbolically and elliptically.
For simplicity, we use the following notation:
\[
u_N := P_N u, \quad u^{\pm} := P^{\pm} u, \quad u_N^+ := P_N^+ u.
\]
Since $\J u_N = P_N (\J u) +i \mathcal{F}^{-1} [ \partial_{\xi} \sigma_N \wh{u}]$ and $u$ is real valued, we have
\begin{equation} \label{freqXt}
\| u (t) \| _{\wt{X}} \sim \Bigg( \Big\| \uel (t) \Big\|_{\wt{X}}^2 + \sum_{\substack{N \in 2^{\mathbb{Z}} \\ N > t^{-\frac{1}{\m}}}} \left\| u_N^+ (t) \right\| _{\wt{X}}^2 \Bigg)^{\frac{1}{2}},
\quad
\uel := \sum _{\substack{N \in 2^{\mathbb{Z}} \\ N \le t^{-\frac{1}{\m}}}} u_N.
\end{equation}
For $t \ge 1$ and $N > t^{-\frac{1}{\m}}$, we define the hyperbolic and elliptic parts of $u_N^+$ as follows:
\[
\uhp_{N} := \sigma_N^{\rm{hyp}} u^{+}_{N}, \quad
\uep_{N} := u^{+}_{N} - \uhp_{N},
\]
where $\sigma_N^{\rm{hyp}} (t,x) := \sigma_{\frac{1}{\kk} t N^{\m-1} \le \cdot \le \kk t N^{\m-1}} (x) \bm{1}_{\R_{+}} (x)$ and $\kk := 2^{2\m+3}$.
This large constant $\kk$ is needed to show \eqref{uepNl2} in Lemma \ref{lem:he_freq_est} below.
Moreover, we define
\[
\uhp = \sum _{\substack{N \in 2^{\mathbb{Z}} \\ N > t^{-\frac{1}{\m}}}} \uhp_{N}, \quad
\ue = u - 2\Re \uhp.
\]
We note that $\uhp$ is supported in $\{ x \in \R_{+} \colon t^{-\frac{1}{\m}} x\ge \frac{1}{2 \kk} \}$.
For $(t,x) \in \R^2$ with $t^{-\frac{1}{\m}} |x| \ge \frac{1}{2 \kk}$, the number of dyadic numbers $N \in 2^{\mathbb{Z}}$ satisfying $\frac{1}{2 \kk} tN^{\m-1} \le |x| \le 2 \kk t N^{\m-1}$ is less than $10$.
Hence, $\uhp (t,x)$ is a finite sum of $\uhp_N(t,x)$'s.

The functions $\uhp_N$ and $\uep_N$ are essentially localized at frequency $N$ in the following sense:
For any $a \ge 0$, $b \in \R$, and $c \ge 0$,
\begin{align}
\label{est:uhperr}
\left\| \left( 1-\PN^{+} \right) |\dx|^{a} \left( |x|^{b} \uhp_N \right) \right\|_{L^2}
&\lesssim_{a,b,c} t^{-\frac{a-b}{\m}} \left( t^{\frac{1}{\m}} N \right)^{-c} \| u_N \|_{L^2},
\\
\label{est:ueperr}
\left\| \left( 1-\PN^{+} \right) |\dx|^{a} \uep_N \right\|_{L^2}
&\lesssim_{a,c,n} t^{-\frac{a}{\m}} \left( t^{\frac{1}{\m}} N \right)^{-c} \| u_N \|_{L^2},
\\
\label{est:ueperr2}
\left\| \left( 1-\PN^{+} \right) |\dx|^{a} \left( |x|^b \sigma_{>t^{\frac{1}{\m}}} (x) \uep_N \right) \right\|_{L^2}
&\lesssim_{a,b,c} t^{-\frac{a-b}{\m}} \left( t^{\frac{1}{\m}} N \right)^{-c} \| u_N \|_{L^2}.
\end{align}
These estimates are consequences of the following lemma (see, for example, \cite{Oka17, Okapre}):
\begin{lem} \label{lem:freq_spat_loc}
For $2 \le p \le \infty$, any $a ,\, b ,\, c \in \R$ with $a \ge 0$ and $a+c \ge 0$, and any $R>0$, we have
\[
\left\| \left( 1- \PN^+ \right) | \dx| ^{a} ( |x|^{b} \sigma_R P_N^+ f) \right\|_{L^p}
\lesssim_{a,b,c} N^{-c+\frac{1}{2}-\frac{1}{p}} R^{-a+b-c}  \| P_N^+ f \|_{L^2} . 
\]
Moreover, we may replace $\sigma_R$ on the left hand side by $\sigma_{>R}$ if $a+c>b+1$ and by $\sigma_{<R}$ if $a+c \ge 0$ and $b=0$.
\end{lem}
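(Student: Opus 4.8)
The plan is to split the output in frequency dyadically, apply the Hausdorff--Young inequality to each piece, and gain smallness from two sources: the Schwartz decay at scale $R^{-1}$ of $\wh g$, where $g:=|x|^b\sigma_R$, and the infinite-order vanishing of the symbol $w_N:=1-\mathbf 1_{\R_+}\sigma_{\frac N2\le\cdot\le2N}$ of $1-\PN^+$ at the endpoints of $[\tfrac N2,2N]$ --- which is available because $\sigma$ is a genuine smooth cutoff, not merely bounded. One first disposes of the range $RN\lesssim1$: there the asserted right-hand side already dominates $N^{\frac12-\frac1p}R^{-a+b}\|P_N^+f\|_{L^2}$, and the bound follows from Hausdorff--Young together with $|\xi|\lesssim R^{-1}$ on the effective support of $\wh g*\wh{P_N^+f}$, $\|\wh g\|_{L^{p'}}\lesssim R^{b+1-\frac1{p'}}$, and $\|\wh{P_N^+f}\|_{L^1}\lesssim N^{\frac12}\|P_N^+f\|_{L^2}$. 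So assume $RN\gtrsim1$ and put $h:=P_N^+f$, so that $\supp\wh h\subseteq[\tfrac N2,2N]$; in the $\sigma_R$ case $g=R^b\psi(\cdot/R)$ with a fixed $\psi\in C_c^\infty(\R\setminus\{0\})$, whence $|\wh g(\zeta)|\lesssim_{b,K}R^{b+1}\lr{R\zeta}^{-K}$ for every $K\ge0$ and, in particular, $|\dx|^a(gh)\in L^p$.

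The heart of the matter is the pointwise bound, valid for every dyadic $M$ and every $K\ge0$,
\[
\Xi_M:=\sup_{\xi\in\supp\sigma_M,\ \eta\in[\frac N2,2N]}\bigl|w_N(\xi)\,\wh g(\xi-\eta)\bigr|\ \lesssim_{b,K}\ R^{b+1}\bigl(R\max(M,N)\bigr)^{-K}.
\]
To prove it, one uses $|w_N(\xi)|\lesssim_K\bigl(N^{-1}\mathrm{dist}(\xi,[\tfrac N2,2N])\bigr)^K$ for all $\xi\in\R$ (infinite-order vanishing of $\sigma$ at $\pm2$ and of $1-\sigma$ at $\pm1$) together with the elementary inequality $(d/N)^K\lr{R(\xi-\eta)}^{-K}\le(RN)^{-K}$, where $d:=\mathrm{dist}(\xi,[\tfrac N2,2N])\le|\xi-\eta|$ (split according to whether $|\xi-\eta|\le R^{-1}$ or not); this gives $\Xi_M\lesssim_{b,K}R^{b+1}(RN)^{-K}$, while for $M\gg N$ one uses instead $|w_N|\le1$ and $|\xi-\eta|\gtrsim M$. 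Granting this, write $1=\sum_{M\in2^{\Z}}\sigma_M(D)$; by Hausdorff--Young, $|\xi|\lesssim M$ on $\supp\sigma_M$, $\|\sigma_M\|_{L^{p'}}\lesssim M^{1/p'}$ and $\|\wh h\|_{L^1}\lesssim N^{1/2}\|h\|_{L^2}$ one obtains
\[
\bigl\|(1-\PN^+)|\dx|^aP_M(gh)\bigr\|_{L^p}\ \lesssim\ \bigl\|w_N(\xi)|\xi|^a\sigma_M(\xi)(\wh g*\wh h)(\xi)\bigr\|_{L^{p'}_\xi}\ \lesssim_{b,K}\ M^{a+\frac1{p'}}R^{b+1}\bigl(R\max(M,N)\bigr)^{-K}N^{1/2}\|h\|_{L^2}.
\]
As $a+\tfrac1{p'}\ge\tfrac12>0$, the sum over $M$ converges (geometrically for $M\le N$, and, once $K$ is large, also for $M>N$) and is controlled by $M\sim N$, so that $\bigl\|(1-\PN^+)|\dx|^a(gh)\bigr\|_{L^p}\lesssim_{b,K}N^{a+\frac32-\frac1p}R^{b+1}(RN)^{-K}\|h\|_{L^2}$; taking $K=K(a,b,c)\ge a+c+1$ (admissible since $a+c\ge0$) and using $RN\gtrsim1$ gives exactly $N^{-c+\frac12-\frac1p}R^{-a+b-c}\|h\|_{L^2}$.

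The $\sigma_{<R}$ variant ($b=0$) is identical; for $\sigma_{>R}$ one writes $|x|^b\sigma_{>R}h=|x|^bh-|x|^b\sigma_{\le R}h$, the second term being of the type just treated, and the hypothesis $a+c>b+1$ is precisely what guarantees that $|\dx|^a(|x|^b\sigma_{>R}h)$ is well defined and the first term convergent. The hard part of the argument is the pointwise bound on $\Xi_M$: one must make the infinite-order vanishing of $w_N$ at $\tfrac N2$ and $2N$ quantitative and check that, weighed against the $\lr{R\,\cdot}^{-K}$ decay of $\wh g$ and using nothing more than $\mathrm{dist}(\xi,[\tfrac N2,2N])\le|\xi-\eta|$, it yields a gain $\bigl(R\max(M,N)\bigr)^{-K}$ uniform both in the output frequency and in $K$. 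The remaining steps --- Hausdorff--Young, the count over a frequency interval of length $\lesssim M$, the geometric summation, and the well-definedness in the $\sigma_{>R}$ case --- are routine.
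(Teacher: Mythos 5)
The paper itself contains no proof of this lemma---it is quoted from \cite{Oka17, Okapre}---so your argument has to be judged on its own terms. For the main assertion (the annulus cutoff $\sigma_R$, in the regime $RN\gtrsim 1$) your proof is correct and is essentially the standard argument behind those references: the symbol $w_N=1-\bm{1}_{\R_+}\sigma_{\frac N2\le\cdot\le 2N}$ vanishes on $[\frac N2,2N]$, where $\wh{P_N^+f}$ lives, $\wh g$ decays rapidly at scale $R^{-1}$, and your quantitative vanishing bound $|w_N(\xi)|\lesssim_K \big(N^{-1}\mathrm{dist}(\xi,[\frac N2,2N])\big)^K$ together with $\mathrm{dist}(\xi,[\frac N2,2N])\le|\xi-\eta|$ does yield the $\big(R\max(M,N)\big)^{-K}$ gain; the Hausdorff--Young and Cauchy--Schwarz bookkeeping, the summation over output frequencies $M$ (using $a+\frac1{p'}\ge\frac12$ and $K$ large), and the final choice $K\ge a+c+1$ with $RN\gtrsim1$ all check out.

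There are, however, two genuine problems. First, the $\sigma_{>R}$ case is not proved by the splitting $|x|^b\sigma_{>R}h=|x|^bh-|x|^b\sigma_{\le R}h$: the bound to be shown decays like $R^{-a+b-c}$ with $-a+b-c<-1$, whereas the first piece does not depend on $R$ at all and the second converges to the first as $R\to\infty$, so no estimate of the two pieces separately can produce the required decay in $R$ (the only case where the split trivializes is $b\in 2\N_0$, when $(1-\PN^{+})|\dx|^a(|x|^bh)=0$); moreover, for $b\notin 2\N_0$ the function $|x|^b\sigma_{\le R}$ is not smooth at the origin, so its Fourier transform decays only like $|\zeta|^{-1-b}$ and it is not ``of the type just treated'', since your core argument uses the rapid decay of $\wh g$, available only for an annulus cutoff or for $b=0$. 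The natural repair---and the real role of the hypothesis $a+c>b+1$---is to decompose $\sigma_{>R}=\sum_{R'\ge 2R}\sigma_{R'}$ over dyadic $R'$, apply the annulus estimate to each piece, and sum: the series converges geometrically to $R^{-a+b-c}$ exactly because $-a+b-c<0$. Second, your disposal of the regime $RN\lesssim1$ rests on the claim that the asserted right-hand side dominates $N^{\frac12-\frac1p}R^{-a+b}\|P_N^+f\|_{L^2}$, i.e.\ on $(RN)^{-c}\gtrsim1$; this holds only for $c\ge0$, while the hypotheses allow $-a\le c<0$. For such $c$ the step fails, and in fact no gain over the trivial bound is available there: the bulk of $\mathcal{F}[|x|^b\sigma_R P_N^+f]$ sits at frequencies of size up to $R^{-1}\gg N$ and is untouched by $1-\PN^{+}$, so $|\dx|^a$ genuinely costs $R^{-a}$ rather than $N^a$. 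You should therefore restrict that reduction to $c\ge0$ (which is all the paper ever uses when $RN\lesssim1$; the gains with $c>0$ are only invoked when $RN\gtrsim1$), or state the regime restriction explicitly.
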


\subsection{Frequency localized estimates}

Since
\[
\J u^+  =
\left( x-t (-i\dx)^{\m-1} \right) u^+,
\]
by factorizing out the term $x- t \xi^{\m-1}$, we define
\[
\J_{+} := |x|^{\frac{1}{\m-1}} + it^{\frac{1}{\m-1}} \dx, \quad
\J_{-} := \sum_{j=0}^{\m-2} |x|^{\frac{\m-2-j}{\m-1}} \left( -i t^{\frac{1}{\m-1}} \dx \right)^j.
\]
These operators are useful in our analysis.

We begin with the following preliminary observation.

\begin{lem} \label{lem:eqs}
Let $a$ be a real number and let $g$ be a ($\C$-valued) smooth function supported in $\R_+$ or $\R_-$.
For any integer $k>0$, the following equations hold:
\begin{align*}
\Re \int_{\R_{\pm}} |x|^a g(x) \ol{\dx^{2k-1} g(x)} dx
&= \sum_{l=1}^{k} C_{2k-1,l}^{a,\pm} \int_{\R_{\pm}} \left| |x|^{\frac{a+1}{2}-l} \dx^{k-l} g(x) \right|^2 dx, \\
\Re \int_{\R_{\pm}} |x|^a g(x) \ol{\dx^{2k} g(x)} dx
&= \sum_{l=0}^{k} C_{2k,l}^{a,\pm} \int_{\R_{\pm}} \left| |x|^{\frac{a}{2}-l} \dx^{k-l} g(x) \right|^2 dx, \\
\Im \int_{\R_{\pm}} |x|^a g(x) \ol{\dx^{2k-1} g(x)} dx
&= \sum_{l=0}^{k-1} D_{2k-1,l}^{a,\pm} \int_{\R} \xi \left| \mathcal{F} \left[ | \cdot |^{\frac{a}{2}-l} \dx^{k-1-l} g \right] (\xi) \right|^2 d\xi, \\
\Im \int_{\R_{\pm}} |x|^a g(x) \ol{\dx^{2k} g(x)} dx
&= \sum_{l=1}^{k} D_{2k,l}^{a,\pm} \int_{\R} \xi \left| \mathcal{F} \left[ | \cdot |^{\frac{a+1}{2}-l} \dx^{k-l} g \right] (\xi) \right|^2 d\xi,
\end{align*}
where $C_{k,l}^{a,\pm}$ and $D_{k,l}^{a,\pm}$ are real constants depending on $a$, $k$, and $l$.
In particular, $C_{2k,0}^{a,\pm} = D_{2k-1,0}^{a,\pm} = (-1)^k$.
\end{lem}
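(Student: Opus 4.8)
The plan is to prove all four identities by repeated integration by parts, first shifting derivatives into ``diagonal'' position and then collapsing the leftover off-diagonal integrals, with an elementary scaling count to determine which terms can survive and to check that every constant is real. I would begin by reducing to the half-line $\R_+$: replacing $g(x)$ by $g(-x)$ converts an integral over $\R_-$ into one over $\R_+$, multiplies the integrand $\overline{\dx^j g}$ by $(-1)^j$, and sends $\xi\mapsto-\xi$ in the Fourier integrals, so the $\R_-$ identities follow from the $\R_+$ ones up to explicit sign changes that preserve the normalization $C_{2k,0}^{a,-}=D_{2k-1,0}^{a,-}=(-1)^k$. On $\R_+$ one has $|x|=x$, and since $g$ is supported in $\R_+$ (and decays at infinity) every boundary term appearing below vanishes, so the integrations by parts are unrestricted.

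Next, the step of shifting derivatives. For the $\dx^{2k}$ identities, integrate by parts $k$ times to reach $\dx^k(x^a g)\,\overline{\dx^k g}$ and expand by the Leibniz rule $\dx^k(x^a g)=\sum_{i=0}^k\binom{k}{i}(a)_i\,x^{a-i}\dx^{k-i}g$, where $(a)_i:=a(a-1)\cdots(a-i+1)$, to obtain
\[
\int_{\R_+} x^a g\,\overline{\dx^{2k}g}\,dx
=(-1)^k\int_{\R_+} x^a|\dx^k g|^2\,dx
+(-1)^k\sum_{i=1}^{k}\binom{k}{i}(a)_i\int_{\R_+} x^{a-i}\dx^{k-i}g\,\overline{\dx^k g}\,dx .
\]
The first term is already of the claimed shape; it is real, so it does not enter the $\Im$ identity, and it supplies $C_{2k,0}^{a,+}=(-1)^k$. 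For the $\dx^{2k-1}$ identities I would instead shift $k-1$ derivatives, producing $(-1)^{k-1}\sum_{i=0}^{k-1}\binom{k-1}{i}(a)_i\int_{\R_+} x^{a-i}\dx^{k-1-i}g\,\overline{\dx^k g}\,dx$; its $i=0$ term gives $D_{2k-1,0}^{a,+}=(-1)^k$ once we use that, by Plancherel, $\Im\int_{\R_+} x^a h\,\overline{\dx h}\,dx$ equals a negative multiple of $\int_{\R}\xi\,|\mathcal{F}[x^{a/2}h](\xi)|^2\,d\xi$ (applied with $h=\dx^{k-1}g$).

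Then I would collapse the off-diagonal integrals. Each has the form $\int_{\R_+} x^b\dx^p g\,\overline{\dx^q g}\,dx$ with $p\neq q$, and, after conjugating if necessary (which fixes $\Re$ and flips the sign of $\Im$), we may take $p<q$, so one integration by parts gives
\[
\int_{\R_+} x^b\dx^p g\,\overline{\dx^q g}\,dx
=-b\int_{\R_+} x^{b-1}\dx^p g\,\overline{\dx^{q-1}g}\,dx
-\int_{\R_+} x^b\dx^{p+1}g\,\overline{\dx^{q-1}g}\,dx ,
\]
which strictly decreases $q-p$ and never increases the total number of derivatives. Iterating, every term becomes a finite real-linear combination of diagonal integrals $\int_{\R_+} x^{b'}|\dx^r g|^2\,dx$ and near-diagonal integrals $\int_{\R_+} x^{b'}\dx^r g\,\overline{\dx^{r+1}g}\,dx$. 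For the near-diagonal ones I would use $2\Re(h\overline{\dx h})=\dx|h|^2$, which gives $\Re\int_{\R_+} x^{b'}\dx^r g\,\overline{\dx^{r+1}g}\,dx=-\tfrac{b'}{2}\int_{\R_+} x^{b'-1}|\dx^r g|^2\,dx$, and, setting $w:=x^{b'/2}\dx^r g$ and applying Plancherel, $\Im\int_{\R_+} x^{b'}\dx^r g\,\overline{\dx^{r+1}g}\,dx$ equals a negative multiple of $\int_{\R}\xi\,|\mathcal{F}[x^{b'/2}\dx^r g](\xi)|^2\,d\xi$. Hence the real part of each expression is a real combination of the $\int x^{b'}|\dx^r g|^2$, and the imaginary part (the diagonal terms being real) a real combination of the $\int\xi\,|\mathcal{F}[x^{b'/2}\dx^r g]|^2$.

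Finally I would identify the surviving terms by homogeneity. Under $g(\cdot)\mapsto g(\lambda\,\cdot)$ all of $\int x^a g\,\overline{\dx^j g}\,dx$, $\int x^{b'}|\dx^r g|^2\,dx$ and $\int\xi\,|\mathcal{F}[x^c\dx^r g]|^2\,d\xi$ are homogeneous, and equating the exponents of $\lambda$ forces $b'-2r=a-j$ for the diagonal terms and $2c-2r=a-j+1$ for the Fourier terms; together with $0\le r\le k$ this pins the weights to the form $\tfrac a2-l$ or $\tfrac{a+1}{2}-l$ and yields exactly the ranges of $l$ in the statement (one checks directly that the $l=0$ slot of the $\dx^{2k-1}$ real identity never arises). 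All constants are real since only differentiation of the real weight $x^a$ and real integrations by parts occur, and combining with the reflection step gives the $\pm$ versions. I expect the main obstacle to be organizational rather than conceptual: verifying that the shift-then-collapse recursion terminates and produces precisely the claimed span with no spurious terms. The scaling count is what keeps this clean, replacing an induction over multi-indices by a one-line dimension match; a minor point still requiring care is that all boundary terms at the origin vanish, which relies on $g$ being supported away from $0$.
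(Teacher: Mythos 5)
Your argument is correct, and it reaches the lemma by a genuinely different organization than the paper's. The paper proceeds by induction on $k$: a single integration by parts converts the order-$(2k-1)$ and order-$2k$ integrals into the inductive hypothesis applied to $\dx g$ (same weight $a$) and to $g$ (weight $a-1$), producing explicit recurrence relations for $C^{a,\pm}_{j,l}$ and $D^{a,\pm}_{j,l}$ from which $C^{a,\pm}_{2k,0}=D^{a,\pm}_{2k-1,0}=(-1)^k$ is read off, with the $k=1$ case computed by hand. You instead integrate by parts $k$ (resp.\ $k-1$) times in one stroke, Leibniz-expand $\dx^k(x^ag)$, and obtain the normalization $(-1)^k$ directly from the diagonal (resp.\ near-diagonal) top term, using the same two elementary identities that form the paper's base case, namely $2\Re\big(h\,\overline{\dx h}\big)=\dx|h|^2$ and $\Im\int w\,\overline{\dx w}\,dx=-\int\xi\,|\widehat{w}(\xi)|^2\,d\xi$; the off-diagonal remainders are then removed by your terminating collapse recursion, and the homogeneity/derivative count pins the weights $\tfrac a2-l$, $\tfrac{a+1}{2}-l$ and the ranges of $l$. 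What each route buys: the paper's induction yields recurrences for every constant, while yours isolates the only constants that matter ($l=0$) with no recurrence and makes the structure of the right-hand sides transparent via scaling. Two points deserve to be made explicit in a write-up: non-contamination of the $l=0$ slots is exactly your observation that the collapse never increases the total number of derivatives (a competing term would need $2k$, resp.\ $2k-1$, derivatives while the remainders start with at most $2k-1$, resp.\ $2k-2$); and the scaling count should be invoked as ``each step of the recursion preserves homogeneity term by term,'' not as an a priori constraint on the identity, so that no linear-independence argument is needed. Your reflection step for $\R_-$, with the factor $(-1)^j$ on the left and $\xi\mapsto-\xi$ on the Fourier side, indeed preserves the stated normalizations, and all boundary terms at the origin vanish because $\supp g$ is a closed subset of $\R_{\pm}$, hence bounded away from $0$.
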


\begin{proof}
For $k=1$, integration by parts yields
\begin{align*}
&
\begin{aligned}
\Re \int_{\R_{\pm}} |x|^a g(x) \ol{\dx g(x)} dx
= \mp \frac{a}{2} \int_{\R_{\pm}} \left| |x|^{\frac{a-1}{2}} g(x) \right|^2 dx,
\end{aligned}
\\
&
\begin{aligned}
& \Re \int_{\R_{\pm}} |x|^a g(x) \ol{\dx^{2} g(x)} dx \\
& = -\Re \int_{\R_{\pm}} \left( |x|^a \dx g(x) \pm a |x|^{a-1} g(x)  \right) \ol{\dx g (x)} dx \\
& = - \int_{\R_{\pm}} \left| |x|^{\frac{a}{2}} |\dx g(x) \right|^2dx + \frac{1}{2} a(a-1) \int_{\R_{\pm}} \left| |x|^{\frac{a}{2}-1} g(x) \right|^2 dx.
\end{aligned}
\end{align*}
Similarly, we have
\begin{align*}
&
\begin{aligned}
\Im \int_{\R_{\pm}} |x|^a g(x) \ol{\dx g(x)} dx
&= \Im \int_{\R_{\pm}} |x|^{\frac{a}{2}} g(x) \ol{\dx \left( |\cdot|^{\frac{a}{2}} g \right) (x)} dx \\
&= - \int_{\R} \xi \left| \mathcal{F} \left[ |\cdot |^{\frac{a}{2}} g \right] (\xi) \right|^2 d\xi ,
\end{aligned}
\\ &
\begin{aligned}
\Im \int_{\R_{\pm}} |x|^a g(x) \ol{\dx^2 g(x)} dx
& = - \Im \int_{\R_{\pm}} \left( |x|^a \dx g(x) \pm a|x|^{a-1} g(x) \right) \ol{\dx g(x)} dx \\
& = \pm a \int_{\R} \xi \left| \mathcal{F} \left[ |\cdot |^{\frac{a-1}{2}} g \right] (\xi) \right|^2 d\xi .
\end{aligned}
\end{align*}
Hence, the equations hold when $k=1$ with $C_{1,1}^{a,\pm} =\mp \frac{a}{2}$, $C_{2,0}^{a,\pm}=-1$, $C_{2,1}^{a,\pm}=\frac{1}{2}a(a-1)$, $D_{1,0}^{a,\pm}=-1$, and $D_{2,1}^{a,\pm}=\pm a$.

Next, we assume that these equalities  hold up to $k-1$.
Integration by parts yields
\begin{align*}
& \begin{aligned}
& \Re \int_{\R_{\pm}} |x|^a g(x) \ol{\dx^{2k-1} g(x)} dx \\
& = - \Re \int_{\R_{\pm}} |x|^a \dx g(x) \ol{\dx^{2(k-1)-1} \dx g(x)} dx \mp a \Re \int_{\R_{\pm}} |x|^{a-1} g(x) \ol{\dx^{2(k-1)} g(x)} dx \\
& = - \sum_{l=1}^{k-1} C_{2k-3,l}^{a,\pm} \int_{\R_{\pm}} \left| |x|^{\frac{a+1}{2}-l} \dx^{k-l} g(x) \right|^2 dx \\
& \quad \mp a \sum_{l=0}^{k-1} C_{2k-2,l}^{a-1,\pm} \int_{\R_{\pm}} \left|  |x|^{\frac{a-1}{2}-l} \dx^{k-1-l} g] (x) \right|^2 dx \\
& = \sum_{l=1}^{k-1} \left( -C_{2k-3,l}^{a,\pm} \mp a C_{2k-2,l-1}^{a-1,\pm} \right) \int_{\R_{\pm}} \left| |x|^{\frac{a+1}{2}-l} \dx^{k-l} g (x) \right|^2 dx \\
& \quad \mp a C_{2k-2,k-1}^{a-1,\pm} \int_{\R_{\pm}} \left| |x|^{\frac{a+1}{2}-k} g(x) \right|^2 dx,
\end{aligned}
\\ &
\begin{aligned}
& \Re \int_{\R_{\pm}} |x|^a g(x) \ol{\dx^{2k} g(x)} dx \\
& = - \Re \int_{\R_{\pm}} |x|^a \dx g(x) \ol{\dx^{2(k-1)} \dx g(x)} dx \mp a \Re \int_{\R_{\pm}} |x|^{a-1} g(x) \ol{\dx^{2k-1} g(x)} dx \\
& = - \sum_{l=0}^{k-1} C_{2k-2,l}^{a,\pm} \int_{\R_{\pm}} \left| |x|^{\frac{a}{2}-l} \dx^{k-l} g(x) \right|^2 dx \\
& \quad \mp a \sum_{l=1}^{k} C_{2k-1,l}^{a-1,\pm} \int_{\R_{\pm}} \left| |x|^{\frac{a}{2}-l} \dx^{k-l} g] (x) \right|^2 dx \\
& = -C_{2k-2,0}^{a,\pm} \int_{\R_{\pm}} \left| |x|^{\frac{a}{2}} \dx^{k} g(x) \right|^2 dx \mp a C_{2k-1,k}^{a-1,\pm} \int_{\R_{\pm}} \left| |x|^{\frac{a}{2}-k} g(x) \right|^2 dx \\
& \quad + \sum_{l=1}^{k-1} \left( -C_{2k-2,l}^{a,\pm} \mp a C_{2k-1,l}^{a-1,\pm} \right) \int_{\R_{\pm}} \left| |x|^{\frac{a}{2}-l} \dx^{k-l} g (x) \right|^2 dx .
\end{aligned}
\end{align*}
Hence, by setting
\begin{align*}
C_{2k-1,l}^{a,\pm} &:= \begin{cases} -C_{2k-3,l}^{a,\pm} \mp a C_{2k-2,l-1}^{a-1,\pm}, & \text{if $l=1,2, \dots, k-1$}, \\ \mp a C_{2k-2,k-1}^{a-1,\pm}, & \text{if $l=k$}, \end{cases} \\
C_{2k,l}^{a,\pm} &:= \begin{cases} -C_{2k-2,0}^{a,\pm}, & \text{if $l=0$}, \\ -C_{2k-2,l}^{a,\pm} \mp aC_{2k-1,l}^{a-1,\pm}, & \text{if $l=1,2, \dots , k-1$}, \\ \mp a C_{2k-1,k}^{a,\pm}, & \text{if $l=k$}, \end{cases}
\end{align*}
we obtain the equations for the real part.
Similarly, by setting
\begin{align*}
D_{2k-1,l}^{a,\pm} &:= \begin{cases} -D_{2k-3,0}^{a,\pm}, & \text{if $l=0$}, \\ -D_{2k-3,l}^{a,\pm} \mp aD_{2k-2,l}^{a-1,\pm}, & \text{if $l=1,2, \dots , k-2$}, \\ \mp a D_{2k-2,k-1}^{a,\pm}, & \text{if $l=k-1$}, \end{cases} \\
D_{2k,l}^{a,\pm} &:= \begin{cases} -D_{2k-2,l}^{a,\pm} \mp a D_{2k-1,l-1}^{a-1,\pm}, & \text{if $l=1,2, \dots, k-1$}, \\ \mp a D_{2k-1,k-1}^{a-1,\pm}, & \text{if $l=k$}, \end{cases}
\end{align*}
we obtain the equations for the imaginary part.
From the recurrence relations with $C_{2,0}^{a,\pm}=D_{1,0}^{a,\pm}=-1$, we have $C_{2k,0}^{a,\pm}=D_{2k-1,0}^{a,\pm}=(-1)^k$.
\end{proof}

We show the frequency localized estimates.

\begin{lem} \label{lem:he_freq_est}
For $t \ge 1$ and $N > t^{-\frac{1}{\m}}$, we have
\begin{align}
\left\| \left( |x|^{\frac{\m-2}{\m-1}} + t^{\frac{\m-2}{\m-1}} N^{\m-2} \right) \J_{+} \uhp_{N} (t) \right\|_{L^2} &\lesssim \| u_N (t) \|_{\wt{X}}, \label{uhpNl2} \\
\left\| \left( |x|+ t N^{\m-1} \right) \uep_{N} (t) \right\|_{L^2} &\lesssim \| u_N (t) \|_{\wt{X}}. \label{uepNl2}
\end{align}
\end{lem}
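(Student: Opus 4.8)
The strategy is to exploit the algebraic factorization of the symbol of $\J$ on positive frequencies. For $x>0$ and $\xi>0$, writing $a:=|x|^{\frac{1}{\m-1}}$ and $b:=t^{\frac{1}{\m-1}}\xi$ one has
\[
x-t\xi^{\m-1}=a^{\m-1}-b^{\m-1}=(a-b)\sum_{j=0}^{\m-2}a^{\m-2-j}b^{j},
\]
so that, up to commutators of $|x|^{\frac{1}{\m-1}}$ with $\dx$, $\J u^{+}=\J_{-}\J_{+}u^{+}+(\text{lower order})$. The factor $\J_{+}$ corresponds to $a-b$, which vanishes along the ray $x=t\xi^{\m-1}$; the factor $\J_{-}$ corresponds to $\sum_{j}a^{\m-2-j}b^{j}$, which I will show is elliptic, of size $\sim|x|^{\frac{\m-2}{\m-1}}+t^{\frac{\m-2}{\m-1}}N^{\m-2}$, once one restricts to frequencies $\sim N$, whereas on $\supp(1-\sigma_N^{\rm hyp})$ the full symbol $x-t\xi^{\m-1}$ is itself non-degenerate, of size $\sim|x|+tN^{\m-1}$. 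Two elementary facts will be used repeatedly: since $\wh{u_N}$ vanishes near $0$ one has $[x,P^{+}]u_N=0$, hence $\|\J u_N^{+}\|_{L^2}\le\|\J u_N\|_{L^2}\le\|u_N\|_{\wt{X}}$; and since $N>t^{-\frac{1}{\m}}$, the second term of $\|\cdot\|_{\wt{X}}$ gives $\|u_N\|_{L^2}\lesssim N\|u_N\|_{\wt{X}}$. As a preliminary step I would commute $\J$ past the spatial cutoff $\sigma_N^{\rm hyp}$: the only nontrivial term is $[\J,\sigma_N^{\rm hyp}]u_N^{+}=-t[(-i\dx)^{\m-1},\sigma_N^{\rm hyp}]u_N^{+}$, and since each derivative hitting $\sigma_N^{\rm hyp}$ costs its inverse width $(tN^{\m-1})^{-1}$ while $u_N^{+}$ has frequency $\sim N$, this has $L^2$ norm $\lesssim N^{-1}\|u_N\|_{L^2}\lesssim\|u_N\|_{\wt{X}}$. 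Hence $\|\J\uhp_N\|_{L^2}+\|\J\uep_N\|_{L^2}\lesssim\|u_N\|_{\wt{X}}$, i.e.\ the weightless versions of both estimates.

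For \eqref{uepNl2} I would upgrade this using the geometry of the elliptic region. On $\supp(1-\sigma_N^{\rm hyp})$, and for $\xi$ in the frequency support $[\tfrac{N}{2},2N]$ of $u_N^{+}$, the choice $\kk=2^{2\m+3}$ forces $|x-t\xi^{\m-1}|\gtrsim|x|+tN^{\m-1}$: on $\R_{-}$ the two summands have the same sign, and on the two transition annuli one uses $2^{-(\m-1)}\gg\kk^{-1}$. Splitting $\uep_N=\PN^{+}\uep_N+(1-\PN^{+})\uep_N$, the frequency tail $(1-\PN^{+})\uep_N$ (with the weight $|x|+tN^{\m-1}$ absorbed by choosing $c$ large) is disposed of via \eqref{est:ueperr}--\eqref{est:ueperr2}, while the weight $|x|+tN^{\m-1}$ applied to $\PN^{+}\uep_N$ can be written as an $L^{2}$-bounded pseudodifferential operator applied to $\J\uep_N$, modulo a lower-order symbol remainder and the commutator $[x,\PN^{+}]\uep_N$. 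This yields $\|(|x|+tN^{\m-1})\uep_N\|_{L^2}\lesssim\|\J\uep_N\|_{L^2}+\|u_N\|_{\wt{X}}\lesssim\|u_N\|_{\wt{X}}$.

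For \eqref{uhpNl2} I would use the ellipticity of $\J_{-}$. On frequencies $\sim N$ its symbol is $\sum_{j=0}^{\m-2}|x|^{\frac{\m-2-j}{\m-1}}(t^{\frac{1}{\m-1}}N)^{j}$, a sum of nonnegative terms whose two extreme summands already dominate $|x|^{\frac{\m-2}{\m-1}}+t^{\frac{\m-2}{\m-1}}N^{\m-2}$, so there is no cancellation. Since $\J_{+}\uhp_N$ is genuinely supported in $\R_{+}$, I can compute $\|\J_{-}\J_{+}\uhp_N\|_{L^2}^{2}$ by integration by parts using Lemma \ref{lem:eqs}, which turns the off-diagonal terms into weighted $L^{2}$-quantities; exploiting the lower bound for the symbol of $\J_{-}$ just noted, this gives $\|(|x|^{\frac{\m-2}{\m-1}}+t^{\frac{\m-2}{\m-1}}N^{\m-2})\J_{+}\uhp_N\|_{L^2}\lesssim\|\J_{-}\J_{+}\uhp_N\|_{L^2}+(\text{frequency tails of }\J_{+}\uhp_N)$, the latter controlled by \eqref{est:uhperr} with $c\ge\m$. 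On the other hand, expanding the commutators, $\J_{-}\J_{+}\uhp_N=\J\uhp_N+\sum_{j}|x|^{\frac{\m-2-j}{\m-1}}\big[\,|x|^{\frac{1}{\m-1}},(-i\dx)^{j}\,\big]\uhp_N$, and since $\big[\,|x|^{\frac{1}{\m-1}},-i\dx\,\big]=\tfrac{i}{\m-1}|x|^{\frac{2-\m}{\m-1}}\sign x$ has size $\sim(tN^{\m-1})^{\frac{2-\m}{\m-1}}$ on $\supp\uhp_N$, each of these commutator terms is $\lesssim N^{-1}\|\uhp_N\|_{L^2}\lesssim\|u_N\|_{\wt{X}}$. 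Combined with the weightless bound for $\|\J\uhp_N\|_{L^2}$, this closes \eqref{uhpNl2}.

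The step I expect to be the main obstacle is the systematic control of two families of error terms: (i) the commutators between $|x|^{\frac{1}{\m-1}}$ (or the spatial cutoffs) and $\dx$, which must be shown to be genuinely lower order — this is precisely where $N>t^{-\frac{1}{\m}}$ (hence $tN^{\m}>1$) and the exact homogeneities of the weights enter — and (ii) the fact that $\uhp_N$ and $\uep_N$ are only approximately localized at frequency $N$, so that passing from the symbol identities to operator inequalities, and in particular ``dividing by the symbol'', is legitimate only after peeling off frequency tails and estimating them through Lemma \ref{lem:freq_spat_loc} and \eqref{est:uhperr}--\eqref{est:ueperr2}. A subsidiary bookkeeping point is verifying that $\kk=2^{2\m+3}$ is large enough for the non-degeneracy used in \eqref{uepNl2}.
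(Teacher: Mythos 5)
Your treatment of \eqref{uhpNl2} follows essentially the paper's own route: you factor $\J$ on positive frequencies as $\J_{-}\J_{+}$ modulo commutators, bound $\|\J_{-}\J_{+}\uhp_{N}\|_{L^2}$ by $\|\J \uhp_{N}\|_{L^2}$ plus commutator terms of size $(tN^{\m})^{1-j+k}N^{-1}\|u_N\|_{L^2}\lesssim \|u_N\|_{\wt{X}}$ (using $tN^{\m}>1$ and $N^{-1}\|u_N\|_{L^2}\lesssim\|u_N\|_{\wt{X}}$, both correct), and get the lower bound for $\|\J_{-}f\|_{L^2}$ with $f=\J_{+}\uhp_{N}$ from Lemma \ref{lem:eqs}. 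One caution: the positivity you invoke is not positivity of the symbol of $\J_{-}$ acting termwise; at the operator level the cross terms are oscillatory, and what saves the argument (as in the paper) is that the $l=0$ contributions produced by Lemma \ref{lem:eqs} carry the sign $C_{2k,0}^{a,+}=D_{2k-1,0}^{a,+}=(-1)^k$ and hence add positively, while all $l\ge1$ contributions and negative-frequency pieces must be estimated as tails via \eqref{est:uhperr} together with $tN^{\m}>1$. You gesture at exactly this, so this half of the lemma is fine and is the same proof.

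For \eqref{uepNl2} you take a genuinely different route. The paper never divides by the symbol: it splits $\uep_{N}$ into the three spatial pieces $|x|\lesssim\frac{1}{\kk}tN^{\m-1}$, the middle annulus (which is supported in $x<0$, so the cross term $2\int tx\,|\dx^{\frac{\m}{2}+1}f|^2dx$ in \eqref{eq:ueobeven} is negative), and $|x|\gtrsim\kk tN^{\m-1}$, and applies the exact identities \eqref{eq:ueob}/\eqref{eq:ueobeven} (note the $|\dx|^{\frac{3}{2}}$ twist for even $\m$, which makes $\dx^{\frac{\m}{2}+1}$ a genuine derivative), absorbing the cross term into $\|t|\dx|^{\m+\frac12}f\|_{L^2}^2$ or $\|x|\dx|^{\frac32}f\|_{L^2}^2$ respectively; this is exactly where $\kk=2^{2\m+3}$ and the tail bounds \eqref{est:ueperr}, \eqref{est:ueperr2} enter. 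Your alternative — ellipticity of $x-t\xi^{\m-1}$ on $\supp(1-\sigma_N^{\mathrm{hyp}})\times[\frac{N}{2},2N]$, which you verify correctly and which is indeed what $\kk$ is for — is plausible, but its key step, that the weight applied to $\PN^{+}\uep_{N}$ can be written as an $L^2$-bounded operator applied to $\J\uep_{N}$ modulo lower-order remainders, is asserted rather than proved. Making it rigorous requires a quantified pseudodifferential calculus (boundedness and composition with controlled remainder for the symbol $(|x|+tN^{\m-1})(x-t\xi^{\m-1})^{-1}$ times the cutoffs) with constants uniform in $t$ and $N$; after rescaling the effective semiclassical parameter is $(tN^{\m})^{-1}\le1$, so this should succeed, but it is a nontrivial amount of machinery that the paper's elementary integration-by-parts identities are specifically designed to avoid. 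So: supply those uniform operator bounds, or replace this half of your argument by the paper's three-region argument based on \eqref{eq:ueobeven}.
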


\begin{proof}
Set $f := \J_{+} \uhp_N$.
We apply Lemma \ref{lem:eqs} to obtain
\begin{align*}
& \| \J_{-} f \|_{L^2}^2 - \sum_{j=0}^{\m-2} \left\| t^{\frac{j}{\m-1}} |x|^{\frac{\m-2-j}{\m-1}} \dx^j f \right\|_{L^2}^2 \\
&= 2 \sum_{j=0}^{\m-3} \sum_{k=1}^{\m-2-j} \Re \int_{\R} i^{k} t^{\frac{2j+k}{\m-1}} |x|^{2-\frac{2j+k+2}{\m-1}} \dx^j f(x) \ol{\dx^{j+k} f(x)} dx \\
&= 2 \sum_{j=0}^{\m-3} \sum_{\substack{1 \le k \le \m-2-j \\ k : \text{ even}}} \Re \int_{\R} (-1)^{\frac{k}{2}} t^{\frac{2j+k}{\m-1}} |x|^{2-\frac{2j+k+2}{\m-1}} \dx^j f(x) \ol{\dx^{j+k} f(x)} dx \\
& \quad + 2 \sum_{j=0}^{\m-3} \sum_{\substack{1 \le k \le \m-2-j \\ k \text{: odd}}} \Im \int_{\R} (-1)^{\frac{k+1}{2}} t^{\frac{2j+k}{\m-1}} |x|^{2-\frac{2j+k+2}{\m-1}} \dx^j f(x) \ol{\dx^{j+k} f(x)} dx \\
&= 2 \sum_{j=0}^{\m-3} \sum_{\substack{1 \le k \le \m-2-j \\ k : \text{ even}}} \sum_{l=0}^{\frac{k}{2}} (-1)^{\frac{k}{2}} C_{k,l}^{2-\frac{2j+k+2}{\m-1},+} t^{\frac{2j+k}{\m-1}} \int_{\R} \left| |x|^{1-\frac{2j+k+2}{2(\m-1)}-l} \dx^{j+\frac{k}{2}-l} f(x) \right|^2 dx \\
& \quad + 2 \sum_{j=0}^{\m-3} \sum_{\substack{1 \le k \le \m-2-j \\ k \text{: odd}}} \sum_{l=0}^{\frac{k-1}{2}} (-1)^{\frac{k+1}{2}} D_{k,l}^{2-\frac{2j+k+2}{\m-1},+} t^{\frac{2j+k}{\m-1}} \\
& \hspace*{50pt} \times \int_{\R} \xi \left| \mathcal{F} \left[ |\cdot |^{1-\frac{2j+k+2}{2(\m-1)}-l} \dx^{j+\frac{k-1}{2}-l} f \right] (\xi) \right|^2 dx \\
&\gtrsim - \sum_{j=0}^{\m-3} \sum_{\substack{1 \le k \le \m-2-j \\ k : \text{ even}}} \sum_{l=1}^{\frac{k}{2}} t^{\frac{2j+k}{\m-1}} \int_{\R} \left| |x|^{1-\frac{2j+k+2}{2(\m-1)}-l} \dx^{j+\frac{k}{2}-l} f(x) \right|^2 dx \\
& \quad - \sum_{j=0}^{\m-3} \sum_{\substack{1 \le k \le \m-2-j \\ k \text{: odd}}} \Bigg( t^{\frac{2j+k}{\m-1}} \int_{\R_{-}} |\xi| \left| \mathcal{F} \left[ |\cdot |^{1-\frac{2j+k+2}{2(\m-1)}} \dx^{j+\frac{k-1}{2}} f \right] (\xi) \right|^2 dx \\
& \hspace*{100pt} + \sum_{l=1}^{\frac{2k-1}{2}} t^{\frac{2j+k}{\m-1}} \int_{\R} |\xi| \left| \mathcal{F} \left[ |\cdot |^{1-\frac{2j+k+2}{2(\m-1)}-l} \dx^{j+\frac{k-1}{2}-l} f \right] (\xi) \right|^2 dx \Bigg).
\end{align*}
Here, \eqref{est:uhperr} yields that for $j \ge 1$
\begin{align*}
&
\begin{aligned}
& t^{\frac{2j+k}{\m-1}} \int_{\R} \left| |x|^{1-\frac{2j+k+2}{2(\m-1)}-l} \dx^{j+\frac{k}{2}-l} f(x) \right|^2 dx \\
& \lesssim t^{\frac{2j+k}{\m-1}} (tN^{\m-1})^{2-\frac{2j+k+2}{\m-1}-2l} \| \dx^{j+\frac{k}{2}-l} \J_{+} \uhp_N \|_{L^2}^2 \\
& \lesssim (tN^{\m})^{2(1-l)-\frac{2}{\m-1}} N^{-2j-k+2l-2+\frac{2}{\m-1}} \bigg( N^{2j+k-2l} \left\| \PN^{+} \J_{+} \uhp_N \right\|_{L^2}^2 \\
& \qquad + \left\| \left( 1-\PN^{+} \right) \dx^{j+\frac{k}{2}-l} \J_{+} \uhp_N \right\|_{L^2}^2 \bigg) \\
& \lesssim (tN^{\m})^{2(1-l)} N^{-2} \| u_N \|_{L^2}^2 + N^{-2} \| u_N \|_{L^2}^2 \\
& \lesssim N^{-2} \| u_N \|_{L^2}^2, \\
\end{aligned}
\\
&
\begin{aligned}
& t^{\frac{2j+k}{\m-1}} \int_{\R} |\xi| \left| \mathcal{F} \left[ |\cdot |^{1-\frac{2j+k+2}{2(\m-1)}-l} \dx^{j+\frac{k-1}{2}-l} f \right] (\xi) \right|^2 dx \\
& \lesssim t^{\frac{2j+k}{\m-1}} N \left\| \PN^{+} \left( |x|^{1-\frac{2j+k+2}{2(\m-1)}-l} \dx^{j+\frac{k-1}{2}-l} \J_{+} \uhp_N \right) \right\|_{L^2}^2 \\
& \quad + t^{\frac{2j+k}{\m-1}} \left\| \left( 1-\PN^{+} \right) |\dx|^{\frac{1}{2}} \left( |x|^{1-\frac{2j+k+2}{2(\m-1)}-l} \dx^{j+\frac{k-1}{2}-l} \J_{+} \uhp_N \right) \right\|_{L^2}^2 \\
& \lesssim (tN^{\m})^{2(1-l)} N^{-2} \| u_N \|_{L^2}^2 + N^{-2} \| u_N \|_{L^2}^2 \\
& \lesssim N^{-2} \| u_N \|_{L^2}^2.
\end{aligned}
\end{align*}
Similarly, we get
\begin{align*}
& t^{\frac{2j+k}{\m-1}} \int_{\R_{-}} |\xi| \left| \mathcal{F} \left[ |\cdot |^{1-\frac{2j+k+2}{2(\m-1)}} \dx^{j+\frac{k-1}{2}} f \right] (\xi) \right|^2 dx \\
& \le t^{\frac{2j+k}{\m-1}} \left\| P^{-} |\dx|^{\frac{1}{2}} \left( |x|^{1-\frac{2j+k+2}{2(\m-1)}} \dx^{j+\frac{k-1}{2}} \J_{+} \uhp_N \right) \right\|_{L^2}^2 \\
& \lesssim N^{-2} \| u_N \|_{L^2}^2.
\end{align*}
Hence, we have
\begin{equation} \label{eq:uhpJ-}
\| \J_{-} f \|_{L^2}^2
\gtrsim \sum_{j=0}^{\m-2} \left\| t^{\frac{j}{\m-1}} |x|^{\frac{\m-2-j}{\m-1}} \dx^j f \right\|_{L^2}^2 - N^{-2} \| u_N \|_{L^2}^2.
\end{equation}

On the other hand, because
\begin{gather*}
\begin{aligned}
(\J_{-} \J_{+} \uhp_N)(x)
=& \left( x-t(- i \dx)^{\m-1} \right) \uhp_N(x) \\
& + \sum_{j=1}^{\m-2} (- i t^{\frac{1}{\m-1}})^j |x|^{\frac{\m-2-j}{\m-1}} \sum_{k=0}^{j-1} \begin{pmatrix} j \\ k \end{pmatrix} \dx^{j-k} \left( x^{\frac{1}{\m-1}} \right) \dx^k \uhp_N,
\end{aligned}
\\
\left( x-t(- i \dx)^{\m-1} \right) \uhpn_N
=\J \uhp_N - t \left( (-i\dx)^{\m-1}-|\dx|^{\m-1} \right) \uhp_N,
\end{gather*}
\eqref{est:uhperr} and $tN^{\m} > 1$ yield
\begin{align*}
\| \J_{-} f \|_{L^2}
& \lesssim \left\| \left( x-t(- i \dx)^{\m-1} \right) \uhp_N \right\|_{L^2} + \sum_{j=1}^{\m-2} \sum_{k=0}^{j-1} t^{\frac{j}{\m-1}} \left\| |x|^{1-\frac{\m j}{\m-1}+k} \dx^k \uhp_N \right\|_{L^2} \\
& \lesssim \| \J \uhp_N \|_{L^2} + \sum_{j=1}^{\m-2} \sum_{k=0}^{j-1} (t N^{\m})^{1-j+k} N^{-1} \| u_N \|_{L^2} + N^{-1} \| u_N \|_{L^2} \\
& \lesssim \|u_N (t) \|_{\wt{X}}.
\end{align*}
Combining this with \eqref{eq:uhpJ-}, we obtain \eqref{uhpNl2}.

For the elliptic bound, we decompose $\uep_N$ into three parts
\[
\uep_N = \sigma _{\le \frac{1}{\kk} tN^{\m-1}} \uep_{N} + \sigma _{\frac{1}{\kk} tN^{\m-1} < \cdot < \kk tN^{\m-1}} \uep_N + \sigma _{\ge \kk tN^{\m-1}} \uep_N.
\]
We observe that the equation
\begin{equation} \label{eq:ueob}
\left\| x f \right\|_{L^2}^2 + \left\| t |\dx|^{\m-1} f \right\|_{L^2}^2 = \left\| \J f \right\|_{L^2}^2 + 2 \int_{\R} tx \left| \dx^{\frac{\m-1}{2}} f(x) \right|^2 dx
\end{equation}
holds for any smooth function $f$ and odd $\m$.
Similarly, for even $\m$,
\begin{equation} \label{eq:ueobeven}
\left\| x |\dx|^{\frac{3}{2}} f \right\|_{L^2}^2 + \left\| t |\dx|^{\m+\frac{1}{2}} f \right\|_{L^2}^2 = \left\| \J |\dx|^{\frac{3}{2}} f \right\|_{L^2}^2 + 2 \int_{\R} tx \left| \dx^{\frac{\m}{2}+1} f(x) \right|^2 dx
\end{equation}
holds for any smooth function $f$.

In what follows, we only consider the case when $\m$ is even, because the case when $\m$ is odd can be similarly handled.

Since $[x, \dx^{\frac{\m}{2}+1} |\dx|^{-\frac{3}{2}}] = - (\frac{\m}{2}+1) \dx^{\frac{\m}{2}} |\dx|^{-\frac{3}{2}} - \frac{3}{2} \dx^{\frac{\m}{2}+2} |\dx|^{-\frac{7}{2}}$ and $\kk = 2^{2\m+3}$, \eqref{est:ueperr2} implies
\begin{align*}
& \left| \int_{\R} tx \left| \dx^{\frac{\m}{2}+1} \left( \sigma _{\ge \kk tN^{\m-1}} \uep_{N} \right) (t,x) \right|^2 dx \right| \\
& \le \frac{2}{\kk} N^{-\m+1} \left\| x \dx^{\frac{\m}{2}+1} \left( \sigma _{\ge \kk tN^{\m-1}} \uep_{N} (t) \right) \right\|_{L^2}^2 \\
& \le \frac{4}{\kk} N^{-\m+1} \left\| |\dx|^{\frac{\m-1}{2}} \left\{ x |\dx|^{\frac{3}{2}} \left( \sigma _{\ge \kk tN^{\m-1}} \uep_{N} (t) \right) \right\} \right\|_{L^2}^2 \\
& \quad + C N^{-\m+1} \left\| \dx^{\frac{\m}{2}} \left( \sigma _{\ge \kk tN^{\m-1}} \uep_{N} (t) \right) \right\|_{L^2}^2 \\
& \le \frac{2^{2\m+1}}{\kk} \left\| x |\dx|^{\frac{3}{2}} \left( \sigma _{\ge \kk tN^{\m-1}} \uep_{N} (t) \right) \right\|_{L^2}^2 \\
& \quad + C N^{-\m+1} \left\| \left( 1-\PN^{+} \right) |\dx|^{\frac{\m-1}{2}} \left( x |\dx|^{\frac{3}{2}}  \sigma _{\ge \kk tN^{\m-1}} \uep_{N} (t) \right) \right\|_{L^2}^2 \\
& \quad + C N \| u_N (t) \|_{L^2}^2 \\
& \le \frac{1}{4} \left\| x |\dx|^{\frac{3}{2}} \left( \sigma _{\ge \kk tN^{\m-1}} \uep_{N} (t) \right) \right\|_{L^2}^2 + C N \| u_{N} (t) \|_{L^2}^2.
\end{align*}
Taking $f= \sigma _{\ge \kk tN^{\m-1}} \uep_{N}$ in \eqref{eq:ueobeven}, we have
\begin{align*}
&\left\| x |\dx|^{\frac{3}{2}} \left( \sigma _{\ge \kk tN^{\m-1}} \uep_{N} (t) \right) \right\|_{L^2} \\
& \lesssim \left\| \J |\dx|^{\frac{3}{2}} \left( \sigma _{\ge \kk tN^{\m-1}} \uep_{N} (t) \right) \right\|_{L^2} + N^{\frac{1}{2}} \| u_N (t) \|_{L^2},
\end{align*}
which shows
\[
\left\| x \sigma _{\ge \kk tN^{\m-1}} \uep_{N} (t) \right\|_{L^2}
\lesssim \| u_N (t) \|_{\wt{X}}.
\]

By \eqref{est:ueperr}, we have
\begin{align*}
& \left| \int_{\R} tx \left| \dx^{\frac{\m}{2}+1} \left( \sigma _{\le \frac{1}{\kk} tN^{\m-1}} \uep_{N} \right) (t,x) \right|^2 dx \right| \\
& \le \frac{2}{\kk} N^{\m-1} \left\| t \dx^{\frac{\m}{2}+1} \left( \sigma _{\le \frac{1}{\kk} tN^{\m-1}} \uep_{N} (t) \right) \right\|_{L^2}^2 \\
& \le \frac{2^{2\m}}{\kk} \left\| \PN^{+} t |\dx|^{\m+\frac{1}{2}} \left( \sigma _{\le \frac{1}{\kk} tN^{\m-1}} \uep_{N} (t) \right) \right\|_{L^2}^2  \\
& \quad + C N^{\m-1} \left\| \left( 1-\PN^{+} \right) t \dx^{\frac{\m}{2}+1} \left( \sigma _{\le \frac{1}{\kk} tN^{\m-1}} \uep_{N} (t) \right) \right\|_{L^2}^2 \\
& \le \frac{1}{8} \left\| t |\dx|^{\m+\frac{1}{2}} \left( \sigma _{<\frac{1}{\kk} tN^{\m-1}} \uep_{N} (t) \right) \right\|_{L^2}^2 + C N \| u_N (t) \|_{L^2}^2 .
\end{align*}
Taking $f= \sigma _{\le \frac{1}{\kk} tN^{\m-1}} \uep_{N}$ in \eqref{eq:ueobeven}, we have
\begin{align*}
& \left\| t |\dx|^{\m+\frac{1}{2}} \left( \sigma _{\le \frac{1}{\kk} tN^{\m-1}} \uep_{N} (t) \right) \right\|_{L^2} \\
& \lesssim \left\| \J |\dx|^{\frac{3}{2}} \left( \sigma _{\le \frac{1}{\kk} tN^{\m-1}} \uep_{N} (t) \right) \right\|_{L^2} + N^{\frac{1}{2}} \| u_N (t) \|_{L^2},
\end{align*}
which shows that
\[
\left\| t |\dx|^{\m-1} \left( \sigma _{\le \frac{1}{\kk} tN^{\m-1}} \uep_{N} (t) \right) \right\|_{L^2}
\lesssim \| u_N (t) \|_{\wt{X}}.
\]

From
\[
\int_{\R} tx \left| \dx^{\frac{\m}{2}+1} \left( \sigma _{\frac{1}{\kk} tN^{\m-1} < \cdot < \kk tN^{\m-1}} (x) \uep_{N} (t,x) \right) \right|^2 dx <0,
\]
taking $f=\sigma _{\frac{1}{\kk} tN^{\m-1} < \cdot < \kk tN^{\m-1}} \uep_{N}$ in \eqref{eq:ueobeven}, we have
\[
tN^{\m-1} \left\| \sigma _{\frac{1}{\kk} tN^{\m-1} < \cdot < \kk tN^{\m-1}} \uep_{N} (t) \right\|_{L^2}
\lesssim \| u_N (t) \|_{\wt{X}}. \qedhere
\]
\end{proof}

\subsection{Decay estimates in $L^2(\R)$ and $L^{\infty}(\R)$}

First, by summing up the frequency localized estimates, we show the $L^2$-estimates.

\begin{cor} \label{cor:uL2}
For $t \ge 1$, we have
\begin{align}
& \sum_{k=0}^{\m-2} \sum_{l=0}^k \left\| t^{\frac{k+1}{\m-1}} |x|^{-\frac{\m k+1}{\m-1}+l} \dx^l \uhp \right\|_{L^2} \lesssim \| u(t) \|_{\wt{X}}, \label{uhpL22} \\
& \sum_{k=0}^{\m-2} \left\| t^{\frac{k}{\m-1}} |x|^{-\frac{k-\m+2}{\m-1}} \J_+ \dx^k \uhp \right\| _{L^2} \lesssim \| u (t) \|_{\wt{X}}, \label{uhpL2} \\
& \sum_{k=0}^{\m-2} \left\| t^{\frac{k+1}{\m}} \lr{t^{-\frac{1}{\m}} x}^{-\frac{k}{\m-1}+1} \dx^k \ue \right\|_{L^2} \lesssim \| u(t) \|_{\wt{X}}. \label{uepL2}
\end{align}
\end{cor}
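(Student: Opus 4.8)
The plan is to obtain each of \eqref{uhpL22}, \eqref{uhpL2}, \eqref{uepL2} by summing the frequency-localized bounds \eqref{uhpNl2}, \eqref{uepNl2}. Write $\uhp = \sum_{N > t^{-1/\m}}\uhp_N$: since $\uhp_N$ is supported in $\{ |x| \sim tN^{\m-1} \}$ with only $O(1)$ of the $\uhp_N$ nonzero at each point, the relevant $L^2$ norm of the sum is $\lesssim$ the $\ell^2$ sum of the norms of the pieces, and likewise $\ue = \uel + 2\Re\sum_{N > t^{-1/\m}}\uep_N$ is summed with almost orthogonality in frequency; in both cases \eqref{freqXt} turns $\| \uel \|_{\wt{X}}^2 + \sum_N \| u_N \|_{\wt{X}}^2$ into $\| u(t) \|_{\wt{X}}^2$. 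I would use throughout that $\uhp_N$, $\uep_N$, and --- crucially --- $\J_{+}\uhp_N$ are essentially frequency localized at $N$ (a consequence of \eqref{est:uhperr}--\eqref{est:ueperr2}, since on the relevant supports multiplication by a power of $|x|$ is comparable to the same power of $tN^{\m-1}$ and spreads frequency by $O((tN^{\m-1})^{-1}) \lesssim N$), so that $\dx^k$ costs a factor $N^k$ up to errors no worse than the commutator terms treated below. I also use $N^{-1}\| u_N \|_{L^2} \lesssim \| u_N \|_{\wt{X}}$ for $N > t^{-1/\m}$ (since $t^{\frac1\m}\lr{t^{\frac1\m}N}^{-1}\sim N^{-1}$ there). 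Estimate \eqref{uhpL22} is then immediate: on $\supp \uhp_N$ one has $|x|^{-\frac{\m k+1}{\m-1}+l}\sim(tN^{\m-1})^{-\frac{\m k+1}{\m-1}+l}$ and $\| \dx^l\uhp_N \|_{L^2}\lesssim N^l\| u_N \|_{L^2}$, so the weighted norm is $\lesssim (tN^{\m})^{l-k}N^{-1}\| u_N \|_{L^2}\lesssim N^{-1}\| u_N \|_{L^2}\lesssim \| u_N \|_{\wt{X}}$ using $l\le k$ and $tN^{\m}>1$; then sum over $N$.

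The estimate \eqref{uhpL2} is the heart of the matter, and the point is that $\J_{+}$ must be treated as a single block: expanding $\J_{+} = |x|^{\frac1{\m-1}} + it^{\frac1{\m-1}}\dx$ makes each summand of size $\sim tN^{\m-1}\| u_N \|_{L^2} = (tN^{\m})N^{-1}\| u_N \|_{L^2}$, which is \emph{not} controlled by $\| u_N \|_{\wt{X}}$; only the cancellation between the two summands, exactly what \eqref{uhpNl2} encodes, is admissible. So I would commute the derivative past $\J_{+}$,
\[
\J_{+}\dx^k\uhp_N = \dx^k(\J_{+}\uhp_N) - \big[\dx^k, |x|^{\frac1{\m-1}}\big]\uhp_N ,
\]
where on $\R_{+}$ the commutator is a sum of terms $|x|^{\frac1{\m-1}-j}\dx^{k-j}$ with $j\ge1$, each handled as in \eqref{uhpL22} and contributing $(tN^{\m})^{1-j}N^{-1}\| u_N \|_{L^2}\lesssim \| u_N \|_{\wt{X}}$. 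For the main term, using that $\J_{+}\uhp_N$ is supported in $\{ |x|\sim tN^{\m-1} \}$ and essentially frequency localized at $N$,
\[
\left\| t^{\frac{k}{\m-1}}|x|^{\frac{\m-2-k}{\m-1}}\dx^k(\J_{+}\uhp_N) \right\|_{L^2}\lesssim t^{\frac{\m-2}{\m-1}}N^{\m-2-k}\cdot N^k\| \J_{+}\uhp_N \|_{L^2} = t^{\frac{\m-2}{\m-1}}N^{\m-2}\| \J_{+}\uhp_N \|_{L^2} \lesssim \| u_N \|_{\wt{X}}
\]
by \eqref{uhpNl2}; then sum over $N$.

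For \eqref{uepL2}, note $-\frac{k}{\m-1}+1 = \frac{\m-1-k}{\m-1}\in(0,1]$, hence $\lr{t^{-\frac1\m}x}^{-\frac{k}{\m-1}+1}\lesssim 1 + t^{-\frac1\m}|x|$, and handle the two pieces of $\ue$ separately. For $\uel$ it suffices to bound $t^{\frac{k+1}\m}\| \dx^k\uel \|_{L^2}$ and $t^{\frac k\m}\| |x|\dx^k\uel \|_{L^2}$; here $\uel$, as well as $x\uel = \J\uel + t|\dx|^{\m-1}\uel$, has frequency $\lesssim t^{-\frac1\m}$, one has $\| \uel \|_{L^2}\lesssim t^{-\frac1\m}\| u(t) \|_{\wt{X}}$ from the second term of the $\wt{X}$-norm, and $\| x\uel \|_{L^2}\lesssim \| u(t) \|_{\wt{X}}$ since $\J\uel = P_{\le t^{-1/\m}}\J u + [x, P_{\le t^{-1/\m}}]u$ with both terms $\lesssim \| u(t) \|_{\wt{X}}$ and $t\| |\dx|^{\m-1}\uel \|_{L^2}\lesssim t^{\frac1\m}\| \uel \|_{L^2}\lesssim \| u(t) \|_{\wt{X}}$; the exponent arithmetic then closes this contribution. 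For each $\uep_N$, split on $|x|\lesssim tN^{\m-1}$ versus $|x|\gtrsim tN^{\m-1}$: in the former, $\lr{t^{-\frac1\m}x}^{-\frac{k}{\m-1}+1}\lesssim(t^{\frac1\m}N)^{\m-1-k}$ so the weight times $N^k$ equals $tN^{\m-1}$, and \eqref{uepNl2} gives $tN^{\m-1}\| \uep_N \|_{L^2}\lesssim \| u_N \|_{\wt{X}}$; in the latter, $t^{\frac{k+1}\m}\lr{t^{-\frac1\m}x}^{-\frac{k}{\m-1}+1}\lesssim N^{-k}|x|$, so the weighted norm is $\lesssim N^{-k}\| |x|\dx^k\uep_N \|_{L^2}\lesssim \| |x|\uep_N \|_{L^2}\lesssim \| u_N \|_{\wt{X}}$, again by \eqref{uepNl2}. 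Then sum over $N$.

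I expect \eqref{uhpL2} to be the main obstacle: the naive estimate loses a factor $tN^{\m}$, so one must keep the $\J_{+}$ block intact while commuting $\dx^k$ through it and while trading $|x|^{\frac{\m-2-k}{\m-1}}$ for $(tN^{\m-1})^{\frac{\m-2-k}{\m-1}}$ on the support, and must check carefully that $\J_{+}\uhp_N$ --- not merely $\uhp_N$ --- stays frequency localized at $N$, so that $\dx^k$ only costs $N^k$ and the errors from \eqref{est:uhperr} stay summable. The remainder is bookkeeping of exponents.
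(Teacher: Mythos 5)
Your proposal is correct and follows essentially the same route as the paper: sum the frequency-localized bounds \eqref{uhpNl2}, \eqref{uepNl2} over $N$ (using the frequency/spatial localization of $\uhp_N$, $\uep_N$ and the error bounds \eqref{est:uhperr}--\eqref{est:ueperr2} to make $\dx^k$ cost $N^k$ and to sum), prove \eqref{uhpL2} by keeping $\J_+$ intact and commuting $\dx^k$ past $|x|^{\frac{1}{\m-1}}$ with the commutator absorbed by \eqref{uhpL22}, and treat $\ue$ via $\uel$ (through $\J\uel$ and low-frequency bounds) plus the $\uep_N$ with the weight traded for $|x|+tN^{\m-1}$. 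The only differences (bounded overlap of supports versus the paper's $\PN$ almost-orthogonality, and splitting at $|x|\sim tN^{\m-1}$ rather than the paper's inequality $t^{\frac{k}{\m-1}}|x|^{1-\frac{k}{\m-1}}N^k\lesssim |x|+tN^{\m-1}$ on $|x|\ge t^{\frac1\m}$) are cosmetic.
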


\begin{proof}
By \eqref{freqXt} and \eqref{est:uhperr}, we have
\begin{align*}
& \sum_{k=0}^{\m-2} \sum_{l=0}^k \left\| t^{\frac{k+1}{\m-1}} |x|^{-\frac{\m k+1}{\m-1}+l} \dx^l \uhp \right\|_{L^2} \\
& \lesssim \sum_{k=0}^{\m-2} \sum_{l=0}^k \Bigg( \sum _{\substack{N \in 2^{\mathbb{Z}} \\ N > t^{-\frac{1}{\m}}}} \left\| t^{\frac{k+1}{\m-1}} |x|^{-\frac{\m k+1}{\m-1}+l} \dx^l \uhp_N (t) \right\| _{L^2}^2 \Bigg)^{\frac{1}{2}} \\
& \quad + \sum_{k=0}^{\m-2} \sum_{l=0}^k \sum _{\substack{N \in 2^{\mathbb{Z}} \\ N > t^{-\frac{1}{\m}}}} \left\| \left( 1-\PN \right) t^{\frac{k+1}{\m-1}} |x|^{-\frac{\m k+1}{\m-1}+l} \dx^l \uhp_N \right\| _{L^2} \\
& \lesssim \sum_{k=0}^{\m-2} \sum_{l=0}^k \Bigg( \sum _{\substack{N \in 2^{\mathbb{Z}} \\ N > t^{-\frac{1}{\m}}}} \left\| t^{-k+l} N^{-\m k+ (\m-1) l-1} \dx^l \uhp_N (t) \right\| _{L^2}^2 \Bigg)^{\frac{1}{2}} + \| u(t) \|_{\wt{X}} \\
& \lesssim \| u (t) \|_{\wt{X}}.
\end{align*}

We use \eqref{freqXt}, \eqref{est:uhperr}, and \eqref{uhpNl2} to obtain
\begin{align*}
& \sum_{k=0}^{\m-2} \left\| t^{\frac{k}{\m-1}} |x|^{-\frac{k-\m+2}{\m-1}} \dx^k \J_+ \uhp \right\| _{L^2} \\
& \lesssim \sum_{k=0}^{\m-2} \Bigg( \sum _{\substack{N \in 2^{\mathbb{Z}} \\ N > t^{-\frac{1}{\m}}}} \left\| t^{\frac{k}{\m-1}} |x|^{-\frac{k-\m+2}{\m-1}} \dx^k \J_+ \uhp_N (t) \right\| _{L^2}^2 \Bigg)^{\frac{1}{2}} \\
& \quad + \sum_{k=0}^{\m-2} \sum _{\substack{N \in 2^{\mathbb{Z}} \\ N > t^{-\frac{1}{\m}}}} \left\| \left( 1-\PN \right) t^{\frac{k}{\m-1}} |x|^{-\frac{k-\m+2}{\m-1}} \dx^k \J_+ \uhp_N (t) \right\| _{L^2} \\
& \lesssim \| u (t) \|_{\wt{X}}.
\end{align*}
This gives \eqref{uhpL2} with $k=0$.
For $k \ge 1$, because
\begin{align*}
&\J_+ \dx^k \uhp \\
&= \dx^k \left( \J_+ \uhp \right) + t^{-\frac{k}{\m-1}} |x|^{\frac{k-\m+2}{\m-1}} \sum_{l=0}^{k-1} C_{k,l} t^{\frac{(k-1)+1}{\m-1}} |x|^{-\frac{\m (k-1)+1}{\m-1}+l} \dx^l \uhp,
\end{align*}
the estimate \eqref{uhpL2} follows from \eqref{uhpNl2} and \eqref{uhpL22}.

For the elliptic bound, we note that
\begin{equation} \label{uepw}
\ue = \uel + 2\Re \sum _{\substack{N \in 2^{\mathbb{Z}} \\ N > t^{-\frac{1}{\m}}}} \uep_N.
\end{equation}
We use \eqref{freqXt}, \eqref{est:ueperr}, and \eqref{uepNl2} to obtain
\begin{align*}
\sum_{k=0}^{\m-2} \left\| t^{\frac{k+1}{\m}} \dx^k \ue \right\|_{L^2}
& \lesssim t^{\frac{1}{\m}} \| \uel \|_{L^2} + \sum_{k=0}^{\m-2} \Bigg( \sum _{\substack{N \in 2^{\mathbb{Z}} \\ N > t^{-\frac{1}{\m}}}} \left( t^{\frac{k+1}{\m}} N^k \left\| \uep_N \right\|_{L^2} \right)^2 \Bigg)^{\frac{1}{2}} \\
& \quad + \sum_{k=0}^{\m-2} \sum _{\substack{N \in 2^{\mathbb{Z}} \\ N > t^{-\frac{1}{\m}}}} t^{\frac{k+1}{\m}} \left\|  \left( 1-\PN \right) \dx^k \uep_N \right\|_{L^2} \\
& \lesssim \| u(t) \|_{\wt{X}}.
\end{align*}
From $t^{\frac{k}{\m-1}} |x|^{-\frac{k}{\m-1}+1} N^k \lesssim |x| + tN^{\m-1}$ and \eqref{est:ueperr2}, we have
\begin{align*}
t^{\frac{k}{\m-1}} \left\| |x|^{-\frac{k}{\m-1}+1} \dx^k \uep_N \right\|_{L^2 (|x| \ge t^{\frac{1}{\m}})}
& \lesssim N^{-k} \left\| (|x|+tN^{\m-1}) \sigma_{>t^{\frac{1}{\m}}} (x) \dx^k \uep_N \right\|_{L^2} \\
& \lesssim \left\| (|x|+ tN^{\m-1}) \uep_N \right\|_{L^2} + t^{-\frac{1}{\m}} N^{-2} \| u_N \|_{L^2}.
\end{align*}
Because
\begin{align*}
t^{\frac{k}{\m-1}} \Big\| |x|^{-\frac{k}{\m-1}+1} \dx^k \uel \Big\|_{L^2 (|x| \ge t^{\frac{1}{\m}})} 
& \lesssim t^{\frac{k}{\m}} \Big\| x \dx^k \uel \Big\|_{L^2 (|x| \ge t^{\frac{1}{\m}})} \\
& \lesssim \Big\| \J \uel \Big\| + t^{\frac{1}{\m}} \Big\| \uel \Big\|_{L^2} \\
& \lesssim \| u(t) \|_{\wt{X}},
\end{align*}
by \eqref{freqXt}, \eqref{est:ueperr2}, and \eqref{uepNl2}, we obtain
\begin{align*}
& \left\| t^{\frac{k+1}{\m}} (t^{-\frac{1}{\m}} |x|)^{-\frac{k}{\m-1}+1} \dx^k \ue \right\|_{L^2 (|x| \ge t^{\frac{1}{\m}})} \\
& \lesssim \| u(t) \|_{\wt{X}} + \Bigg( \sum _{\substack{N \in 2^{\mathbb{Z}} \\ N > t^{-\frac{1}{\m}}}} \left\| (|x|+ tN^{\m-1}) \uep_N \right\|_{L^2}^2 \Bigg)^{\frac{1}{2}} + \sum _{\substack{N \in 2^{\mathbb{Z}} \\ N > t^{-\frac{1}{\m}}}} t^{-\frac{1}{\m}} N^{-2} \| u_N \|_{L^2} \\
& \quad + \sum _{\substack{N \in 2^{\mathbb{Z}} \\ N > t^{-\frac{1}{\m}}}} \left\| \left( 1-\PN \right) t^{\frac{k+1}{\m}} (t^{-\frac{1}{\m}} |x|)^{-\frac{k}{\m-1}+1} \sigma_{>t^{\frac{1}{\m}}} (x) \dx^k \uep_N \right\|_{L^2} \\
& \lesssim \| u(t) \|_{\wt{X}}. \qedhere
\end{align*}
\end{proof}

Second, we show the pointwise decay estimates.

\begin{prop} \label{prop:est|u|}
For $t \ge 1$ and $k=0,1, \dots, \m-2$, we have
\begin{align}
& \left| t^{\frac{k+1}{\m}} \lr{t^{-\frac{1}{\m}} x}^{-\frac{k}{\m-1}+\frac{\m-3}{2(\m-1)}} \dx^k \uhp (t,x) \right| \lesssim t^{-\frac{1}{2 \m}} \| u (t) \|_{\wt{X}}, \label{est:uhpp} \\
& \left| t^{\frac{k+1}{\m}} \lr{t^{-\frac{1}{\m}} x}^{-\frac{k}{\m-1}+\frac{2\m-3}{2(\m-1)}} \dx^k \ue (t,x) \right| \lesssim t^{-\frac{1}{2 \m}} \| u (t) \|_{\wt{X}}. \label{est:uepp}
\end{align}
\end{prop}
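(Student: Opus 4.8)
The plan is to deduce the pointwise bounds \eqref{est:uhpp} and \eqref{est:uepp} from the weighted $L^2$ estimates of Corollary~\ref{cor:uL2} by means of the one-dimensional inequality $\|g\|_{L^\infty}^2\lesssim\|g\|_{L^2}\|\dx g\|_{L^2}$ (the fundamental theorem of calculus together with the Cauchy--Schwarz inequality), applied to suitably weighted versions of $\dx^k\uhp$ and $\dx^k\ue$. The preliminary step is to replace the bracket weights by pure powers: since $\uhp$ is supported in $\{x\in\R_+ : t^{-\frac1\m}x\ge\frac1{2\kk}\}$, one has $\lr{t^{-\frac1\m}x}\sim t^{-\frac1\m}x$ there, while for $\ue$ one argues separately on $\{t^{-\frac1\m}|x|\le 2\}$, where $\lr{t^{-\frac1\m}x}\sim 1$ so that it suffices to bound $\|t^{\frac{k+1}\m}\dx^k\ue\|_{L^\infty}$, and on $\{t^{-\frac1\m}|x|\ge 2\}$, where again $\lr{t^{-\frac1\m}x}\sim t^{-\frac1\m}|x|$. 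Throughout it is convenient to keep in mind the self-similar scaling of \eqref{selfsimilar}, under which every weight becomes a power of $|y|$ and the factor $t^{-\frac1{2\m}}$ in the conclusion is exactly the Jacobian factor relating $\|\cdot\|_{L^2_x}$ to $\|\cdot\|_{L^2_y}$.

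For $\uhp$ I would take $g:=t^{\frac{k+1}\m}\bigl(t^{-\frac1\m}|x|\bigr)^{\beta_k}\dx^k\uhp$ with $\beta_k:=-\frac{k}{\m-1}+\frac{\m-3}{2(\m-1)}$, so that on the support of $\uhp$ the quantity controlled in \eqref{est:uhpp} is comparable to $\|g\|_{L^\infty}$, and estimate $\|g\|_{L^2}$ and $\|\dx g\|_{L^2}$. After rewriting $|x|$ in terms of $t^{-\frac1\m}|x|$, the exponent $\beta_k$ lies strictly between the one carried by \eqref{uhpL22} with $l=k$, namely $-\frac{k+1}{\m-1}$, and the one carried by the non-derivative part of \eqref{uhpL2}, namely $1-\frac{k}{\m-1}$; interpolating these two weighted $L^2$ bounds via H\"older's inequality yields $\|g\|_{L^2}\lesssim\|u(t)\|_{\wt X}$. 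In $\dx g$ the term in which $\dx$ hits the weight carries a strictly smaller power of $t^{-\frac1\m}|x|$ together with an extra factor $t^{-\frac1\m}$ and is controlled in the same way, whereas the term $t^{\frac{k+1}\m}(t^{-\frac1\m}|x|)^{\beta_k}\dx^{k+1}\uhp$ is precisely where the factorisation $\J_{+}=|x|^{\frac1{\m-1}}+it^{\frac1{\m-1}}\dx$ pays off: modulo its non-derivative part (absorbed by the bounds just used), \eqref{uhpL2} controls $t^{\frac{k+2}\m}(t^{-\frac1\m}|x|)^{\frac{\m-2-k}{\m-1}}\dx^{k+1}\uhp$ in $L^2$, and since $\beta_k=\frac{\m-2-k}{\m-1}-\frac12$ is smaller this gives $\|\dx g\|_{L^2}\lesssim t^{-\frac1\m}\|u(t)\|_{\wt X}$. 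Combining the two yields $\|g\|_{L^\infty}\lesssim t^{-\frac1{2\m}}\|u(t)\|_{\wt X}$, which is \eqref{est:uhpp}.

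For $\ue$ I would argue analogously with $g:=t^{\frac{k+1}\m}\lr{t^{-\frac1\m}x}^{\beta_k^{\mathrm{ell}}}\dx^k\ue$, where $\beta_k^{\mathrm{ell}}:=-\frac{k}{\m-1}+\frac{2\m-3}{2(\m-1)}$. Since $\beta_k^{\mathrm{ell}}$ is smaller than the exponent $1-\frac{k}{\m-1}$ appearing in \eqref{uepL2}, the bound $\|g\|_{L^2}\lesssim\|u(t)\|_{\wt X}$ follows at once, and the weight-derivative term in $\dx g$, together with the term $\dx^{k+1}\ue$ for $k\le\m-3$, is handled by \eqref{uepL2} with a gain $t^{-\frac1\m}$. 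The remaining case $k=\m-2$, where \eqref{uepL2} no longer supplies $\dx^{\m-1}\ue$, must be treated separately. On $\{t^{-\frac1\m}|x|\le 2\}$ this is done directly: writing $\J\ue=\J u-2\Re\J\uhp$ and using \eqref{eq:Ju}, Corollary~\ref{cor:uL2}, and a direct estimate of $\|x\ue\|_{L^2}$ through the decomposition \eqref{uepw} and \eqref{uepNl2} (here the point is that $\ue$, unlike $u$, omits the hyperbolic piece near the ray $x\sim tN^{\m-1}$, so its first spatial moment is controlled), one obtains $\|t\,|\dx|^{\m-1}\ue\|_{L^2}\lesssim\|x\ue\|_{L^2}+\|\J\ue\|_{L^2}\lesssim\|u(t)\|_{\wt X}$, which bounds $\dx^{\m-1}\ue$ in $L^2$. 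On $\{t^{-\frac1\m}|x|\ge 2\}$ one instead uses the dyadic decomposition $\ue=\uel+2\Re\sum_{N>t^{-1/\m}}\uep_N$ of \eqref{uepw}, applying the Sobolev inequality to $\uel$ (where an extra derivative costs only $t^{-\frac1\m}$ by Bernstein) and to each $\uep_N$ separately (where it costs $N$), and then summing in $N$; here the sharp spatial localisation \eqref{uepNl2} --- interpolated against $\|\uep_N\|_{L^2}\lesssim (tN^{\m-1})^{-1}\|u_N(t)\|_{\wt X}$, which also follows from Lemma~\ref{lem:he_freq_est} --- the near-orthogonality encoded in \eqref{freqXt}, and the rapidly decaying frequency-localisation errors \eqref{est:ueperr} and \eqref{est:ueperr2} are what make the summation close. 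I expect this dyadic summation at the top order $k=\m-2$, together with the attendant bookkeeping of the weights and of the $\bigl(1-\PN^{+}\bigr)(\cdot)$ errors, to be the main technical obstacle; the rest is routine weight arithmetic.
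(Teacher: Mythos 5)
Your elliptic argument is broadly in the right spirit (and, for $k\le\m-3$, genuinely simpler than the paper's dyadic treatment, provided you split the weight unevenly between the two Gagliardo--Nirenberg factors: with the symmetric choice $g=t^{\frac{k+1}\m}\lr{t^{-\frac1\m}x}^{\beta_k^{\mathrm{ell}}}\dx^k\ue$ the term $\dx^{k+1}\ue$ in $\dx g$ carries the exponent $\beta_k^{\mathrm{ell}}=1-\frac{2k+1}{2(\m-1)}$, which exceeds the exponent $1-\frac{k+1}{\m-1}$ available from \eqref{uepL2} by $\frac1{2(\m-1)}$; writing $|g(x)|^2\le 2\int|w\,\tilde g|\,|w^{-1}\dx \tilde g|$ with $w=\lr{t^{-\frac1\m}x}^{\frac1{2(\m-1)}}$ repairs this, and this uneven splitting is exactly what the paper's dyadic proof does implicitly). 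But the hyperbolic half contains a genuine error. You invoke ``the non-derivative part of \eqref{uhpL2}'', i.e.\ the bound $\bigl\|t^{\frac{k}{\m-1}}|x|^{1-\frac{k}{\m-1}}\dx^k\uhp\bigr\|_{L^2}\lesssim\|u(t)\|_{\wt X}$, both as an interpolation endpoint for $\|g\|_{L^2}$ and to absorb the $\dx^{k+1}\uhp$ term in $\|\dx g\|_{L^2}$. No such bound follows from Corollary \ref{cor:uL2}, and it is false: \eqref{uhpL2} controls only the full combination $\J_+\dx^k\uhp$, whose whole point is the cancellation $x\approx t|\xi|^{\m-1}$ on the hyperbolic support; its two pieces are individually enormous there. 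Already at $k=0$ your claim amounts to $\|x\,\uhp\|_{L^2}\lesssim\|u(t)\|_{\wt X}$, which fails for a wave packet travelling along $x=vt$ (there $\|x\,u\|_{L^2}\sim vt\,\|u\|_{L^2}$ while $\|\J u\|_{L^2}$ stays bounded). For the same reason your intermediate conclusion $\|g\|_{L^2}\lesssim\|u(t)\|_{\wt X}$ is itself false: the valid $L^2$ weight \eqref{uhpL22} is exactly a half power of $t^{-\frac1\m}|x|$ weaker than the pointwise weight $\beta_k$, and a dispersed profile saturating the linear decay makes $\|g\|_{L^2}/\|u\|_{\wt X}$ grow like $(t^{\frac{\m-1}\m}v)^{\frac12}$ on the piece at velocity $v$. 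The $L^2$--$L^\infty$ weight discrepancy cannot be interpolated away.

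A second, related defect is that you never conjugate by the phase before differentiating: in $\dx g$ the term $t^{\frac{k+1}\m}(t^{-\frac1\m}|x|)^{\beta_k}\dx^{k+1}\uhp$ is genuinely of size $\xi_v\,|g|$ on the packet at velocity $v$, so $\|\dx g\|_{L^2}\lesssim t^{-\frac1\m}\|u\|_{\wt X}$ also fails; the smallness only appears in $\dx\bigl(e^{-i\phi}\dx^k\uhp\bigr)=-it^{-\frac1{\m-1}}e^{-i\phi}\J_+\dx^k\uhp$. This is precisely how the paper proceeds: it applies the Gagliardo--Nirenberg inequality to $e^{-i\phi}\dx^k\uhp_N$ on each dyadic piece (where $|x|\sim tN^{\m-1}$, so weights become constants), pairing the unweighted factor $\|\dx^k\uhp_N\|_{L^2}\lesssim N^k\|u_N\|_{L^2}$ against the weighted factor $\|t^{\frac{\m-2}{\m-1}}N^{\m-2}\J_+\uhp_N\|_{L^2}$ from \eqref{uhpNl2}, and then sums using the finite overlap of the supports. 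To salvage your approach you would have to (i) work with $e^{-i\phi}\dx^k\uhp$ so that the derivative produces $\J_+$, and (ii) distribute the weight unevenly between the two factors of the one-dimensional Sobolev inequality (or restrict the integration to $(x,\infty)$ and exploit monotonicity of the weights), using \eqref{uhpL22} for one factor and \eqref{uhpL2} as a whole --- never its non-derivative part alone --- for the other. As written, the hyperbolic estimate \eqref{est:uhpp} is not proved.
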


\begin{proof}
The Gagliardo-Nirenberg inequality
\[
|f| \lesssim \| f \|_{L^2}^{\frac{1}{2}} \| \dx f \|_{L^2}^{\frac{1}{2}}
\]
with $f = e^{-i\phi} \uhp_{N}$, $\dx (e^{-i\phi} \uhp) = -it^{-\frac{1}{\m-1}} e^{-i\phi} \J_+ \uhp$, Lemma \ref{lem:he_freq_est}, and \eqref{est:uhperr} imply
\begin{align*}
& \left| t^{\frac{k+1}{\m}} \lr{t^{-\frac{1}{\m}} x}^{-\frac{k}{\m-1}+\frac{\m-3}{2(\m-1)}} \dx^k \uhp_N (t,x) \right| \\
& \lesssim t^{\frac{\m-1}{2\m}} N^{\frac{\m-3}{2}-k} \left\| \dx^k \uhp_N (t) \right\|_{L^{\infty}} \\
& \lesssim t^{\frac{\m^2-3\m+1}{2\m (\m-1)}} N^{\frac{\m-3}{2}-k} \left\| \dx^k \uhp_{N} (t) \right\|_{L^2}^{\frac{1}{2}} \left\| \J_+ \dx^k \uhp_{N} (t) \right\|_{L^2}^{\frac{1}{2}} \\
& \lesssim t^{-\frac{1}{2\m}} \| N^{-1} u_{N} (t) \|_{L^2}^{\frac{1}{2}} \left\| t^{\frac{\m-2}{\m-1}} N^{\m-2} \J_+ \uhp_{N} (t) \right\|_{L^2}^{\frac{1}{2}} + t^{-\frac{1}{2\m}} \| u(t) \|_{\wt{X}} \\
& \lesssim t^{-\frac{1}{2\m}} \| u (t) \|_{\wt{X}}.
\end{align*}
Because $\uhp (t,x)$ is a finite sum of $\uhp_N(t,x)$'s, this yields the desired hyperbolic bound \eqref{est:uhpp}.

Next, we show the elliptic bound.
First, we consider the low frequency part.
For $|x| \le t^{\frac{1}{\m}}$, Bernstein's inequality implies
\[
\left| t^{\frac{k+1}{\m}} \lr{t^{-\frac{1}{\m}} x}^{-\frac{k}{\m-1}+\frac{2\m-3}{2(\m-1)}} \dx^k \uel (t,x) \right|
\lesssim t^{\frac{1}{2\m}} \Big\| \uel (t) \Big\|_{L^2}
\lesssim t^{-\frac{1}{2\m}} \| u(t) \|_{\wt{X}}.
\]
Similarly, for $|x| \ge t^{\frac{1}{\m}}$, we have
\begin{align*}
& \left| t^{\frac{k+1}{\m}} \lr{t^{-\frac{1}{\m}} x}^{-\frac{k}{\m-1}+\frac{2\m-3}{2(\m-1)}} \dx^k \uel (t,x) \right| \\
& \lesssim t^{\frac{k}{\m}} \Big\| x \dx^k \uel (t) \Big\|_{L^{\infty}} \\
& \lesssim t^{-\frac{1}{2\m}} \Big\| \J \uel (t) \Big\|_{L^2} + t^{\frac{1}{2\m}} \Big\| \uel (t) \Big\|_{L^2} \\
& \lesssim t^{-\frac{1}{2\m}} \| u(t) \|_{\wt{X}}.
\end{align*}

Second, we consider the high frequency part.
For $|x|\le t^{\frac{1}{\m}}$, the Gagliardo-Nirenberg inequality and \eqref{est:ueperr} yield
\begin{align*}
& \left| t^{\frac{k+1}{\m}} \lr{t^{-\frac{1}{\m}} x}^{-\frac{k}{\m-1}+\frac{2\m-3}{2(\m-1)}} \dx^k \uep_N (t,x) \right| \\
& \lesssim t^{\frac{k+1}{\m}} N^k \left\| \uep_N (t,x) \right\|_{L^2} + t^{\frac{1}{2\m}} (t^{\frac{1}{\m}}N)^{-2} \| u_N(t) \|_{L^2} \\
& = t^{\frac{k-\m+1}{\m}} N^{k-\m+1} \left\| tN^{\m-1}\uep_N (t,x) \right\|_{L^2} + t^{\frac{1}{2\m}} (t^{\frac{1}{\m}}N)^{-2} \| u_N(t) \|_{L^2}.
\end{align*}
Thus, by \eqref{uepNl2}, we have
\[
\sum_{\substack{N \in 2^{\mathbb{Z}} \\ N > t^{-\frac{1}{\m}}}} \left| t^{\frac{k+1}{\m}} \lr{t^{-\frac{1}{\m}} x}^{-\frac{k}{\m-1}+\frac{2\m-3}{2(\m-1)}} \dx^k \uep_N (t,x) \right|
\lesssim t^{-\frac{1}{2\m}} \| u(t) \|_{\wt{X}}.
\]
For $|x|>t^{\frac{1}{\m}}$, there exists $M \in 2^{\Z}$ such that
\[
\uep_N (t,x) = \sigma_{tM^{\m-1}}(x) \uep_N (t,x).
\]
Then, the Gagliardo-Nirenberg inequality and Lemma \ref{lem:freq_spat_loc} lead to
\begin{align*}
& \left| t^{\frac{k+1}{\m}} \lr{t^{-\frac{1}{\m}} x}^{-\frac{k}{\m-1}+\frac{2\m-3}{2(\m-1)}} \dx^k \uep_N (t,x) \right| \\
& \lesssim t^{\frac{2\m-1}{2\m}} M^{\frac{2\m-3}{2}-k} \left| \sigma_{tM^{\m-1}}(x) \dx^k \uep_N (t,x) \right| \\
& \lesssim t^{\frac{2\m-1}{2\m}} M^{\frac{2\m-3}{2}-k} N^{k+\frac{1}{2}} \left\| \sigma_{tM^{\m-1}}(x) \uep_N (t) \right\|_{L^2} \\
& \quad + t^{-k+\frac{2\m-1}{2\m}} N^{\frac{1}{2}} M^{\frac{2\m-3}{2}-\m k} \max \left( 1, tNM^{\m-1} \right)^{-2} \| u_N (t) \|_{L^2}.
\end{align*}
Hence, we have
\begin{align*}
& \sum_{\substack{N \in 2^{\mathbb{Z}} \\ N > t^{-\frac{1}{\m}}}} \left| t^{\frac{k+1}{\m}} \lr{t^{-\frac{1}{\m}} x}^{-\frac{k}{\m-1}+\frac{2\m-3}{2(\m-1)}} \dx^k \uep_N (t,x) \right| \\
& \lesssim \sum_{\substack{N \in 2^{\mathbb{Z}} \\ t^{-\frac{1}{\m}} < N \le M}} t^{-\frac{1}{2\m}} M^{-k-\frac{1}{2}} N^{k+\frac{1}{2}} \left\| x \uep_N (t) \right\|_{L^2} \\
& \quad + \sum_{\substack{N \in 2^{\mathbb{Z}} \\ N > M}} t^{-\frac{1}{2\m}} M^{\frac{2\m-3}{2}-k} N^{-\frac{2\m-3}{2}+k} \left\| tN^{\m-1} \uep_N (t) \right\|_{L^2}
+ t^{-\frac{1}{2\m}} \| u(t) \|_{\wt{X}} \\
& \lesssim t^{-\frac{1}{2\m}} \| u(t) \|_{\wt{X}}. \qedhere
\end{align*}
\end{proof}

\begin{rmk} \label{rmk:initialptb}
For $t \ge 1$, the estimate
\[
\left| t^{\frac{k}{\m}+\frac{3}{4\m}} \lr{t^{-\frac{1}{\m}} x}^{-\frac{k}{\m-1}+\frac{\m-2}{2(\m-1)}} \dx^k \uhp (t,x) \right|
\lesssim \| u(t) \|_{L^2} + t^{-\frac{1}{2\m}} \| u (t) \|_{\wt{X}}
\]
holds true.
Indeed, the Gagliardo-Nirenberg inequality, \eqref{est:uhperr}, and \eqref{uhpNl2} yield
\begin{align*}
& \left| t^{\frac{k}{\m}+\frac{3}{4\m}} \lr{t^{-\frac{1}{\m}} x}^{-\frac{k}{\m-1}+\frac{\m-2}{2(\m-1)}} \dx^k \uhp_N (t,x) \right| \\
& \lesssim t^{\frac{2\m-1}{4\m}} N^{\frac{\m-2}{2}-k} \left\| \dx^k \uhp_N (t) \right\|_{L^{\infty}} \\
& \lesssim t^{\frac{2\m^2-5\m+1}{4\m (\m-1)}} N^{\frac{\m-2}{2}-k} \left\| \dx^k \uhp_{N} (t) \right\|_{L^2}^{\frac{1}{2}} \left\| \J_+ \dx^k \uhp_{N} (t) \right\|_{L^2}^{\frac{1}{2}} \\
& \lesssim t^{-\frac{1}{4\m}} \| u_{N} (t) \|_{L^2}^{\frac{1}{2}} \left\| t^{\frac{\m-2}{\m-1}} N^{\m-2} \J_+ \uhp_{N} (t) \right\|_{L^2}^{\frac{1}{2}} + t^{-\frac{1}{2\m}} \| u(t) \|_{\wt{X}} \\
& \lesssim \| u(t) \|_{L^2} + t^{-\frac{1}{2\m}} \| u (t) \|_{\wt{X}}.
\end{align*}
Accordingly, by combining this estimate with \eqref{est:uepp} and Remark \ref{rmk:reqH2}, we obtain \eqref{est:u_infty} at $t=1$.
\end{rmk}

\section{Wave packets} \label{S:wave_packet}

\subsection{Construction of wave packets}

Let $t \ge 1$.
Setting
\begin{equation} \label{lambda}
\lambda := t^{-\frac{1}{2}} v^{-\frac{\m-2}{2(\m-1)}}
=t^{-\frac{1}{\m}} (t^{\frac{\m-1}{\m}} v)^{-\frac{\m-2}{2(\m-1)}},
\end{equation}
we define, for $v \ge t^{-\frac{\m-1}{\m}}$,
\[
\Psi _v(t,x) := \chi \left( \lambda (x-vt) \right) e^{i \phi (t,x)},
\]
where $\chi$ is a smooth function with $\supp \chi \subset [-\frac{1}{2},\frac{1}{2}]$ and $\int_{\R} \chi (z) dz =1$, and $\phi$ is defined by \eqref{phase}.
The spatial support of $\Psi_v$ is included in $[\frac{vt}{2}, \frac{3}{2}vt]$.

We show that $\Psi_v(t,x)$ is essentially localized at frequency $\xi_v := v^{\frac{1}{\m-1}}$ in the following sense:

\begin{lem} \label{lem:freq_psi}
For $t \ge 1$ and $v \ge t^{-\frac{\m-1}{\m}}$, we have
\[
\mathcal{F}[ \Psi_v] (t,\xi)
= \frac{1}{\sqrt{\m-1}} \lambda^{-1} \chi_1 \left( \lambda^{-1} (\xi -\xi_v), \lambda^{-1} \xi_v \right) e^{-\frac{1}{\m}it \xi^{\m}},
\]
where $\chi_1 (\cdot, \al) \in \mathcal{S} (\R)$ satisfies
\begin{equation} \label{est:chi1p}
\sup_{\al \ge 1} \sup_{ \zeta \in \R} \left| \lr{\zeta}^k \dz^l \chi_1 (\zeta, \al) \right| \lesssim_{k,l} 1
\end{equation}
for any $k,l \in \N_0$.
Moreover, there exists a constant $C_1>0$ such that for any $\al \ge 1$,
\begin{equation} \label{chi11}
\left| \int_{\R} \chi_1(\zeta, \al) d\zeta - 1 \right| \le \frac{C_1}{\al}.
\end{equation}
\end{lem}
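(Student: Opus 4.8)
The plan is to compute $\mathcal F[\Psi_v](t,\cdot)$ almost explicitly and then read off all three assertions. Writing
\[
\mathcal F[\Psi_v](t,\xi)=\int_{\R}\chi\bigl(\lambda(x-vt)\bigr)e^{i(\phi(t,x)-x\xi)}\,dx ,
\]
recall that the integrand is supported in $[\tfrac12vt,\tfrac32vt]\subset\R_+$ — this is the only place the hypothesis $v\ge t^{-\frac{\m-1}{\m}}$, i.e.\ $\al:=\lambda^{-1}\xi_v\ge1$, is used — so that $\phi(t,x)=\tfrac{\m-1}{\m}t^{-\frac1{\m-1}}x^{\frac{\m}{\m-1}}-\tfrac\pi4$ there. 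Because $\m\in\Z$, the substitution $x=ty^{\m-1}$ turns the fractional power into a polynomial, $t^{-\frac1{\m-1}}x^{\frac{\m}{\m-1}}=ty^{\m}$; I then reparametrise by $y^{\m-1}=v+(\lambda t)^{-1}s$, $s\in\supp\chi\subset[-\tfrac12,\tfrac12]$, which is legitimate since $\al\ge1$ (so $y\sim\xi_v$ on the support) and is arranged precisely so that $\chi(\lambda(x-vt))=\chi(s)$ and $dx=\lambda^{-1}\,ds$. A short algebraic identity gives $\phi(t,x)-x\xi=-\tfrac1\m t\xi^{\m}-\tfrac\pi4+t\,g_\xi(y)$ with $g_\xi(y):=\tfrac{\m-1}{\m}y^{\m}-y^{\m-1}\xi+\tfrac1\m\xi^{\m}$ (homogeneous of degree $\m$), whence
\[
\mathcal F[\Psi_v](t,\xi)=e^{-\frac i\m t\xi^{\m}}\,e^{-i\pi/4}\lambda^{-1}\!\int_{-1/2}^{1/2}\!\chi(s)e^{i\Theta(s)}\,ds,\qquad \Theta(s):=t\,g_\xi(y(s)).
\]
This is already of the stated form with $\chi_1(\zeta,\al):=c_{\m}e^{-i\pi/4}\int_{-1/2}^{1/2}\chi(s)e^{i\Theta(s;\zeta,\al)}\,ds$, where $\zeta=\lambda^{-1}(\xi-\xi_v)$ and $c_{\m}>0$ is the constant dictated by the Fourier normalisation; using homogeneity of $g_\xi$ and $t\xi_v^{\m}=\al^2$ one checks $\Theta(s;\zeta,\al)=\al^2\,g_{1+\zeta/\al}\bigl((1+s/\al)^{1/(\m-1)}\bigr)$, so $\chi_1$ depends only on $(\zeta,\al)$.

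The fact that makes everything tractable is that $\Theta$ has a remarkably simple $s$-derivative: from $g_\xi'(y)=(\m-1)y^{\m-2}(y-\xi)$ and $\tfrac{d}{ds}y^{\m-1}=(\lambda t)^{-1}$ one obtains
\[
\partial_s\Theta(s;\zeta,\al)=\lambda^{-1}\bigl(y(s)-\xi\bigr)=r(s;\al)-\zeta,\qquad r(s;\al):=\al\bigl[(1+s/\al)^{1/(\m-1)}-1\bigr]=\tfrac{s}{\m-1}+O(1/\al),
\]
and $r(\cdot\,;\al)$, together with all its $s$-derivatives, is bounded on $[-\tfrac12,\tfrac12]$ uniformly for $\al\ge1$. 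Integrating in $s$ gives $\Theta(s;\zeta,\al)=\Theta_0(\zeta,\al)+R(s;\al)-s\zeta$ with $R(s;\al):=\int_0^s r(\sigma;\al)\,d\sigma=\tfrac{s^{2}}{2(\m-1)}+O(1/\al)$ and $\Theta_0(\zeta,\al):=\Theta(0;\zeta,\al)=\al^2g_{1+\zeta/\al}(1)$, which is a real polynomial in $\zeta$ of degree $\m$ with coefficients bounded uniformly for $\al\ge1$ and satisfies $\Theta_0(\zeta,\al)=\tfrac{\m-1}{2}\zeta^{2}+O\bigl(\al^{-1}\lr{\zeta}^{\m}\bigr)$. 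Equivalently $\chi_1(\zeta,\al)=c_{\m}e^{-i\pi/4}e^{i\Theta_0(\zeta,\al)}\wh{\eta_{\al}}(\zeta)$ with $\eta_{\al}:=\chi\,e^{iR(\cdot\,;\al)}\in C^{\infty}_c$.

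To prove \eqref{est:chi1p} I argue directly on the oscillatory integral. Each $\dz^{l}\chi_1$ is a finite linear combination of integrals $\int\chi(s)Q(\zeta,s;\al)e^{i\Theta(s;\zeta,\al)}\,ds$ with $Q$ polynomial in $(\zeta,s)$, of $\zeta$-degree at most $l(\m-1)$, with coefficients bounded uniformly for $\al\ge1$ (the $\zeta$-powers come from $\partial_\zeta\Theta=\Theta_0'(\zeta,\al)-s$ and its $\zeta$-derivatives). For $|\zeta|$ larger than a fixed multiple of $\sup_{|s|\le1/2}|r(s;\al)|$ one has $|\partial_s\Theta|=|r(s;\al)-\zeta|\gtrsim|\zeta|$ while all higher $s$-derivatives of $\Theta$ stay bounded, so repeated integration by parts in $s$ gains arbitrarily many powers of $\lr{\zeta}^{-1}$, overwhelming the polynomial $Q$ and yielding $|\dz^l\chi_1(\zeta,\al)|\lesssim_{k,l}\lr{\zeta}^{-k}$; for $|\zeta|$ bounded the estimate is immediate. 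For \eqref{chi11} I compute the $\al\to\infty$ limit exactly: there $\Theta_0(\zeta,\al)\to\tfrac{\m-1}{2}\zeta^{2}$ and $\eta_\al\to\eta_\infty=\chi\,e^{i(\cdot)^{2}/(2(\m-1))}$, and a Fubini argument together with the Fresnel identity $\int_{\R}e^{i\frac{\m-1}{2}\zeta^{2}-is\zeta}\,d\zeta=\sqrt{2\pi/(\m-1)}\,e^{i\pi/4}e^{-is^{2}/(2(\m-1))}$ collapses — the twisting phases cancelling — to $\int_{\R}\chi_1(\zeta,\infty)\,d\zeta=c_{\m}\sqrt{2\pi/(\m-1)}\int_{\R}\chi(s)\,ds=1$ (this also fixes $c_{\m}$). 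For finite $\al$ I split $\chi_1(\zeta,\al)-\chi_1(\zeta,\infty)$ as $c_{\m}e^{-i\pi/4}\bigl[e^{i\Theta_0(\zeta,\al)}(\wh{\eta_\al}-\wh{\eta_\infty})(\zeta)+(e^{i\Theta_0(\zeta,\al)}-e^{i\frac{\m-1}{2}\zeta^{2}})\wh{\eta_\infty}(\zeta)\bigr]$: the first term is controlled by $\|\eta_\al-\eta_\infty\|_{C^{k}}\lesssim_k\al^{-1}$, giving $|\wh{\eta_\al}-\wh{\eta_\infty}|(\zeta)\lesssim_N\al^{-1}\lr{\zeta}^{-N}$; in the second, $|e^{i\Theta_0(\zeta,\al)}-e^{i\frac{\m-1}{2}\zeta^{2}}|\le\min\bigl(2,\,C\al^{-1}\lr{\zeta}^{\m}\bigr)$ is paired with the Schwartz function $\wh{\eta_\infty}$. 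Both integrate to $O(\al^{-1})$, so $\bigl|\int_{\R}\chi_1(\zeta,\al)\,d\zeta-1\bigr|\le C_1/\al$.

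The only delicate point is the uniformity in $\al$ as $\al\downarrow1$, where $\Theta_0(\cdot,\al)$ exhibits its full degree-$\m$ behaviour; this is handled precisely because the explicit formulas express $r$, $R$, $\Theta_0$ and $\eta_\al$ in terms of objects with $\al$-independent bounds (polynomials with bounded coefficients, $C^{\infty}_c$ functions with bounded $C^{k}$-norms), so the potentially large $\al$-dependent factors appearing in $\zeta$-derivatives of the phase are always absorbed by the integration by parts in $s$. A minor additional point is justifying the Fubini exchange in the Fresnel computation, which is standard (insert $e^{-\eps\zeta^{2}}$ and let $\eps\downarrow0$).
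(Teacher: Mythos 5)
Your argument is correct, and it reaches the same structural decomposition as the paper — rescaling to $s=\lambda(x-vt)$, an exact splitting of the phase into a degree-$\m$ polynomial in $\zeta$ with $O(1)$ coefficients (leading term $\tfrac{\m-1}{2}\zeta^2$), a chirped bump $\chi e^{iR}$, and a Fresnel computation for the normalization — but the route differs in three places. First, you obtain the splitting via the substitution $x=ty^{\m-1}$ and the clean identity $\partial_s\Theta=r(s;\al)-\zeta$, whereas the paper Taylor-expands $\phi$ at $x=vt$ and isolates the cubic remainder $R(z,\al)$ explicitly; the two decompositions coincide, and your derivative identity is a nice way to see why all the $\al$-dependence is tame. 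Second, for \eqref{est:chi1p} you integrate by parts in $s$ directly in the oscillatory integral (nonstationary phase for $|\zeta|\gtrsim 1$), while the paper completes the square using the Fresnel integral to arrive at the closed-form representation \eqref{eq:chi1} in terms of $\wh{\chi_2}$, uniformly Schwartz in $\al\ge1$, from which the bounds are read off; your route is more self-contained and avoids that manipulation, the paper's buys an explicit formula that it reuses verbatim for $\wt{\chi}$ in Lemma \ref{LPsi1}. Third, for \eqref{chi11} you compare with the explicit $\al\to\infty$ limit and show the difference is $O(\al^{-1})$, while the paper integrates its representation in $\zeta$ to get $\sqrt{2\pi}\,\wh{\chi_2}(0,\al)+O(\al^{-1})=1+O(\al^{-1})$; both rest on the same facts ($e^{iR}=1+O(\al^{-1})$ on $\supp\chi$, cubic-and-higher $\zeta$-terms carrying $\al^{-1}$). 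Two small points to tighten: the constant you call $c_{\m}$ is already forced by the Fourier normalization (with the paper's convention, $c_{\m}=\sqrt{(\m-1)/(2\pi)}$), so the Fresnel identity should be presented as verifying $c_{\m}\sqrt{2\pi/(\m-1)}=1$ — the content of the stated prefactor $\tfrac{1}{\sqrt{\m-1}}$ — rather than as "fixing" $c_{\m}$; and the hypothesis $\al\ge1$ is used not only for the support statement but also for the uniform bounds on $r$, $R$, and the coefficients of $\Theta_0$, which you do invoke, so the parenthetical "only place" remark should be dropped.
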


\begin{proof}
From Taylor's theorem, we can write for $x>0$
\begin{align*}
&\phi (t,x) \\
& = \phi (t,vt) + \dx \phi (t,vt) (x-vt) + \frac{1}{2} \dx^2 \phi (t,vt) (x-vt)^2 + \int_{vt}^x \frac{(x-y)^2}{2} \dx^3 \phi (t,y) dy \\
& = -\frac{\pi}{4} + \frac{\m-1}{\m} t \xi_v^{\m} + \xi_v (x-vt) + \frac{1}{2(\m-1)} \lambda^2 (x-vt)^2 + R \left( \lambda (x-vt), \lambda^{-1} \xi_v \right) ,
\end{align*}
where
\[
R(z,\al) := -\frac{\m-2}{2(\m-1)^2} \frac{z^3}{\al} \int _0^1 (1-\theta)^2 \left( \theta \frac{z}{\al}+1 \right)^{-\frac{2\m-3}{\m-1}} d\theta .
\]
We note that $R (z,\al)$ is well-defined provided that $z>-\al$.
By a change of variables using $z= \lambda (x-vt)$, we have
\begin{align*}
& \mathcal{F} [\Psi_v] (t,\xi) \\
& = \frac{1}{\sqrt{2\pi}} \int_{\R} e^{-ix\xi} \chi ( \lambda (x-vt)) e^{i \phi (t,x)} dx \\
& = e^{-i \frac{\pi}{4}} \lambda^{-1} e^{\frac{1}{\m} it (-\m \xi \xi_v^{\m-1} + (\m-1) \xi_v^{\m})} \frac{1}{\sqrt{2\pi}} \int _{\R} e^{-i z \lambda^{-1} (\xi - \xi_v)} e^{\frac{i}{2(\m-1)} z^2 + i R(z,  \lambda^{-1} \xi_v)} \chi (z) dz  \\
& = \frac{1}{\sqrt{\m-1}} \lambda^{-1} \chi_1 \left( \lambda^{-1} (\xi - \xi_v), \lambda^{-1} \xi_v \right) e^{-\frac{1}{\m}it\xi^{\m}} ,
\end{align*}
where
\[
\chi_1 (\zeta, \al) := (1-i) \sqrt{\frac{\m-1}{2}} e^{\frac{i}{\m} \sum_{l=0}^{\m-2} \left( \begin{smallmatrix} \m \\ l \end{smallmatrix} \right) \zeta^{\m-l} \al^{-\m+l+2}} \mathcal{F} [e^{\frac{i}{2(\m-1)} z^2 + i R(z, \al)} \chi ] (\zeta).
\]
By definition, $\chi_1 (\cdot, \al) \in \mathcal{S}(\R)$ for $\al \ge 1$.
From the Fresnel integrals, 
\[
(1-i) \sqrt{\frac{\pi}{2}} = \int_{\R} e^{-i \left( \eta +\sqrt{\frac{\m-1}{2}} (\zeta-\frac{z}{\m-1}) \right)^2} d\eta
\]
holds for any $z, \zeta \in \R$.
Accordingly, we have
\begin{align*}
& \mathcal{F} [e^{\frac{i}{2(\m-1)} z^2 + i R(z, \al)} \chi ] (\zeta) \\
& = e^{-i \frac{m-1}{2} \zeta^2} \frac{1}{\sqrt{2\pi}} \int_{\R} e^{i \frac{\m-1}{2} (\zeta-\frac{z}{\m-1})^2 + iR(z,\al)} \chi (z) dz \\
& = \frac{1}{1-i} \sqrt{\frac{2}{\pi}} e^{-\frac{\m-1}{2} i\zeta^2} \frac{1}{\sqrt{2\pi}} \int_{\R} \int_{\R} e^{-i \left( \eta^2 + \sqrt{2 (\m-1)} \eta (\zeta - \frac{z}{\m-1}) \right)} e^{iR(z,\al)} \chi (z) d\eta dz \\
& = \frac{1}{1-i} \sqrt{\frac{2}{\pi}} e^{-\frac{\m-1}{2}i\zeta^2} \int_{\R} e^{-i\eta^2} e^{-i \sqrt{2 (\m-1)} \eta \zeta} \wh{\chi_2} \left( -\sqrt{\frac{2}{\m-1}} \eta, \al \right) d\eta,
\end{align*}
where
\[
\chi_2 (\cdot, \al) := \chi e^{iR(\cdot,\al)} \in \mathcal{S} (\R)
\]
for $\al \ge 1$.
Hence, we can write
\begin{equation} \label{eq:chi1}
\chi_1 (\zeta, \al)
= \frac{\m-1}{2} \sqrt{\frac{2}{\pi}} e^{\frac{i}{\m} \sum_{l=0}^{\m-3} \left( \begin{smallmatrix} \m \\ l \end{smallmatrix} \right) \zeta^{\m-l} \al^{-\m+l+2}} \int_{\R} e^{-i \frac{\m-1}{2} \eta^2} e^{i (\m-1) \eta \zeta} \wh{\chi_2} \left( \eta, \al \right) d\eta.
\end{equation}
As $\chi_2 (\cdot, \al) \in \mathcal{S} (\R)$, $e^{\frac{i}{\m} \sum_{l=0}^{\m-3} \left( \begin{smallmatrix} \m \\ l \end{smallmatrix} \right) \zeta^{\m-l} \al^{-\m+l+2}} = 1 + O \left( \frac{|\zeta|^3}{\al} \lr{\zeta}^{\m-3} \right)$, and
\begin{equation} \label{eq:chi2}
\sup_{\al \ge 1} \sup_{\eta \in \R} |\lr{\eta}^k \de^l \wh{\chi_2} (\eta, \al)| \lesssim_{k,l} 1,
\end{equation}
we obtain
\[
\int_{\R} \chi_1 (\zeta,\al) d\zeta
= \sqrt{2\pi} \wh{\chi_2} (0,\al) +O \left( \frac{1}{\al} \right).
\]
Because $e^{i R(z,\al)} = 1 + O (\frac{1}{\al})$ for $|z| < 1$ and $\al \ge 1$, we have
\[
\sqrt{2\pi} \wh{\chi_2} (0,\al) = \int_{\R} \chi (z) e^{iR(z,\al)} dz = 1+ O \left( \frac{1}{\al} \right).
\]
Finally, \eqref{est:chi1p} follows from \eqref{eq:chi1} and \eqref{eq:chi2}.
\end{proof}

For $v \ge t^{-\frac{\m-1}{\m}}$, we define the nearest dyadic number to $\xi_v$ by $N_{v} \in 2^{\mathbb{Z}}$.
Then, $\frac{\xi_v}{2} < N_v < 2\xi_v$ holds.

Integration by parts with \eqref{est:chi1p} yields
\[
\left| \left( 1- \PNv^+ \right) |\dx|^a \Psi_v(t,x) \right|
\lesssim_{a,l} t^{-\frac{a}{\m}} \left( t^{\frac{\m-1}{\m}} v \right)^{-l} \min (1, |x|^{-1} tv)^2
\]
for any $a, l \ge 0$, which implies
\begin{equation}
\label{Psi_L1}
\left\| \left( 1- \PNv^+ \right) |\dx|^a \Psi_v(t) \right\|_{L_x^1}
\lesssim_{a,c} t^{\frac{1-a}{\m}} \left( t^{\frac{\m-1}{\m}} v \right)^{-c}
\end{equation}
for $v \ge t^{-\frac{\m-1}{\m}}$ and any $a, c \ge 0$.

Next, we show that $\Psi_v$ is a good approximate solution for the linear equation.

We begin with the following preliminary observation.
Set
\[
S_j := \{ k= (k_1, \dots, k_j) \in \N_0 \colon 0 \le k_1 \le \dots \le k_j \le j, \, k_1+ \dots + k_j=j \}
\]
for $j \in \mathbb{N}$.
\begin{lem} \label{lem:diff}
Let $f$ be a smooth function.
For any $j \in \mathbb{N}$, we have
\begin{equation} \label{eq:diff}
\frac{d^j}{dx^j} e^{f(x)} = \sum_{k \in S_j} C^{(j)}_k e^{f(x)} \prod_{l=1}^j f^{(k_l)}(x),
\end{equation}
where
\[
f^{(l)} := \begin{cases} 1, & \text{if } l=0, \\ \frac{d^l f}{dx^l}, & \text{otherwise}, \end{cases}
\]
and $C^{(j)}_k$ is a constant depending on $j \in \mathbb{N}$, $k \in S_j$.
In particular,
\[
C^{(j)}_{(1,\dots,1)} = 1, \quad C^{(j)}_{(0,1,\dots,1,2)} = \begin{pmatrix} j \\ 2 \end{pmatrix}.
\]
\end{lem}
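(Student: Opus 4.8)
The plan is to prove \eqref{eq:diff} by induction on $j$. The base case $j=1$ is the identity $\frac{d}{dx} e^{f(x)} = f'(x) e^{f(x)}$, which matches $S_1 = \{(1)\}$ and $C^{(1)}_{(1)} = 1$. For the inductive step I would assume \eqref{eq:diff} at level $j$ and differentiate both sides. For each $k \in S_j$ the product rule gives
\[
\frac{d}{dx}\Big( e^{f} \prod_{l=1}^j f^{(k_l)} \Big) = f^{(1)} e^{f} \prod_{l=1}^j f^{(k_l)} + e^{f} \sum_{\substack{1 \le m \le j \\ k_m \ge 1}} f^{(k_m+1)} \prod_{l \ne m} f^{(k_l)},
\]
where it is crucial that the factors with $k_m = 0$ (equal to the constant $1$) contribute nothing upon differentiation. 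Each resulting term is again of the form $e^f \prod_{l=1}^{j+1} f^{(k'_l)}$ for some $k' \in S_{j+1}$: for the first term the associated multiset is $\{k_1,\dots,k_j,1\}$, and for the $m$-th term of the second sum it is $\{k_1,\dots,\wh{k_m},\dots,k_j,k_m+1,0\}$, where one pads with a $0$ to restore $j+1$ components; in both cases the entries are nonnegative, sum to $j+1$, and are bounded by $j+1$ (using $k_l \le j$). Collecting like terms then yields \eqref{eq:diff} at level $j+1$, with $C^{(j+1)}_{k'}$ defined as the sum of the $C^{(j)}_k$ over all contributing pairs (and $0$ if there are none).

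It then remains to identify the two distinguished coefficients. For $k' = (1,\dots,1) \in S_{j+1}$, every term coming from the second sum carries a padded $0$ and so cannot be all-ones; hence the only contribution is the first term applied to $k = (1,\dots,1) \in S_j$, giving $C^{(j+1)}_{(1,\dots,1)} = C^{(j)}_{(1,\dots,1)} = 1$. For $k' = (0,1,\dots,1,2) \in S_{j+1}$ (one $0$, $j-1$ ones, one $2$), the first-term route forces $\{k_1,\dots,k_j\} = \{0,1,\dots,1,2\}$, i.e.\ $k = (0,1,\dots,1,2)\in S_j$, contributing $C^{(j)}_{(0,1,\dots,1,2)} = \binom{j}{2}$ by the inductive hypothesis; the second-sum route requires $\{k_1,\dots,\wh{k_m},\dots,k_j,k_m+1\} = \{1,\dots,1,2\}$ with $k_m \ge 1$ and $k$ containing no $0$ (else a second $0$ would appear after padding), which forces $k = (1,\dots,1)\in S_j$ with $k_m=1$, and all $j$ choices of $m$ give the same term, contributing $j\cdot C^{(j)}_{(1,\dots,1)} = j$. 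Adding, $C^{(j+1)}_{(0,1,\dots,1,2)} = \binom{j}{2} + j = \binom{j+1}{2}$, with base case $j=2$ being $C^{(2)}_{(0,2)} = 1 = \binom{2}{2}$.

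The product-rule computation itself is routine; the step that actually needs care is the bookkeeping between the "$j$-factor" and "$(j+1)$-factor" representations — namely that the constant factors $f^{(0)}$ simultaneously kill differentiation and may be freely inserted to pad products — together with the short combinatorial enumeration in the last paragraph checking that no spurious pair $(k,m)$ contributes to the two special multisets. I expect that padding/collision bookkeeping to be the only genuine obstacle; everything else is mechanical.
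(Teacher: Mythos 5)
Your proposal is correct and follows essentially the same route as the paper: induction on $j$ via the product rule, with the same recurrences $C^{(j+1)}_{(1,\dots,1)}=C^{(j)}_{(1,\dots,1)}$ and $C^{(j+1)}_{(0,1,\dots,1,2)}=C^{(j)}_{(0,1,\dots,1,2)}+j\,C^{(j)}_{(1,\dots,1)}$ yielding the values $1$ and $\binom{j}{2}$. Your extra bookkeeping about padding with $f^{(0)}$ and ruling out spurious contributions just makes explicit what the paper leaves implicit.
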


\begin{proof}
A direct calculation shows
\begin{align*}
\frac{d}{dx} e^{f(x)} &= e^{f(x)} f'(x), \qquad
\frac{d^2}{dx^2} e^{f(x)} = e^{f(x)} \left( f'(x)^2 + f''(x) \right), \\
\frac{d^3}{dx^3} e^{f(x)} &= e^{f(x)} \left( f'(x)^3 + 3f'(x) f''(x) + f'''(x) \right).
\end{align*}
Hence, we have
\begin{equation} \label{initialC}
C^{(1)}_{(1)} = C^{(2)}_{(1,1)} = C^{(2)}_{(0,2)} = C^{(3)}_{(1,1,1)} = C^{(3)}_{(0,0,3)}=1, \quad C^{(3)}_{(0,1,2)}=3.
\end{equation}
We assume that \eqref{eq:diff} holds up to $j-1$.
Because
\begin{align*}
&\frac{d^j}{dx^j} e^{f(x)} \\
&= \frac{d}{dx} \left( \sum_{k \in S_{j-1}} C^{(j-1)}_k e^{f} \prod_{l=1}^{j-1} f^{(k_l)} \right) (x) \\
&= \sum_{k \in S_{j-1}} C^{(j-1)}_k e^{f(x)} \Bigg( f'(x) \prod_{l=1}^{j-1} f^{(k_l)}(x) + \sum_{\substack{n \in \{ 1, \dots, j-1\} \\ k_n \neq 0}} f^{(k_n+1)}(x) \prod_{l \neq n} f^{(k_l)}(x) \Bigg),
\end{align*}
the constants $C^{(j)}_k$ are determined by $C^{(j-1)}_k$, which shows \eqref{eq:diff}.

In particular, the following recurrence equations hold true:
\[
C^{(j)}_{(1, \dots , 1)} = C^{(j-1)}_{(1,\dots,1)}, \quad
C^{(j)}_{(0,1,\dots,1,2)} = C^{(j-1)}_{(0,1,\dots,1,2)} + (j-1) C^{(j-1)}_{(1,\dots,1)}.
\]
By \eqref{initialC}, we obtain $C^{(j)}_{(1,\dots,1)} = 1$ and  $C^{(j)}_{(0,1,\dots,1,2)} = \begin{pmatrix} j \\ 2 \end{pmatrix}$.
\end{proof}

Set
\[
S_j' := S_j \setminus \{ (1,\dots, 1) \}, \quad
S_j'' := S_j' \setminus \{ (0,1, \dots, 1,2) \}.
\]
Lemma \ref{lem:diff} and $\dt \phi = -\frac{1}{\m} (\dx \phi)^{\m}$ yield
\begin{align*}
& \left( \dt + \frac{i}{\m} (-i \dx)^{\m} \right) \Psi_v (t,x) \\
& = -\frac{x+vt}{2t} \lambda \chi' e^{i\phi} + i \dt \phi \chi e^{i\phi} \\
& \quad + \frac{i}{\m} (-i)^{\m} \bigg\{ \chi \dx^{\m} \left( e^{i \phi} \right) + \m \lambda \chi' \dx^{\m-1} \left( e^{i \phi} \right)  + \frac{\m (\m-1)}{2} \lambda^2 \chi'' \dx^{\m-2} \left( e^{i \phi}\right) \\
& \quad + \frac{\m (\m-1)(\m-2)}{6} \lambda^3 \chi''' \dx^{\m-3} \left( e^{i \phi}\right)
+ \sum_{j=0}^{\m-4} \begin{pmatrix} \m \\ j \end{pmatrix} \lambda^{\m-j} \chi^{(\m-j)} \dx^j \left( e^{i \phi} \right) \bigg\} \\
& = -\frac{x+vt}{2t} \lambda \chi' e^{i\phi} + i \dt \phi \chi e^{i\phi} \\
& \quad +\frac{i}{\m} \chi \bigg\{ (\dx \phi)^{\m} - i \begin{pmatrix} \m \\ 2 \end{pmatrix} (\dx \phi)^{\m-2} \dx^2 \phi + (-i)^{\m} \sum_{k \in S''_{\m}} C^{(\m)}_k \prod_{l=1}^{\m} (i \phi)^{(k_l)} \bigg\} e^{i\phi} \\
& \quad + \lambda \chi' \bigg\{ (\dx \phi)^{\m-1} -i \begin{pmatrix} \m-1 \\ 2 \end{pmatrix} (\dx \phi)^{\m-3} \dx^2 \phi \\
& \hspace*{40pt} + (-i)^{\m-1} \sum_{k \in S''_{\m-1}} C^{(\m-1)}_k \prod_{l=1}^{\m-1} (i \phi)^{(k_l)} \bigg\} e^{i\phi} \\
& \quad -i \frac{\m-1}{2} \lambda^2 \chi'' \bigg\{ (\dx \phi)^{\m-2} -i \begin{pmatrix} \m-2 \\2 \end{pmatrix} (\dx \phi)^{\m-4} \dx^2 \phi \\
& \hspace*{80pt} + (-i)^{\m-2} \sum_{k \in S''_{\m-2}} C^{(\m-2)}_k \prod_{l=1}^{\m-2} (i\phi)^{(k_l)} \bigg\} e^{i\phi} \\
& \quad - \frac{(\m-1)(\m-2)}{6} \lambda^3 \chi''' \bigg\{ (\dx \phi)^{\m-3} + (-i)^{\m-3} \sum_{k \in S'_{\m-3}} C^{(\m-3)}_k \prod_{l=1}^{\m-3} (i\phi)^{(k_l)} \bigg\} e^{i\phi} \\
& \quad + \frac{i}{\m} (-i)^{\m} \sum_{j=0}^{\m-4} \begin{pmatrix} \m \\ j \end{pmatrix} \lambda^{\m-j} \chi^{(\m-j)} \dx^j \left( e^{i \phi} \right) \\
& = \bigg\{ \dx \left( \frac{x-vt}{2t} \chi \right) - i \frac{\m-1}{2} t^{-\frac{\m-2}{\m-1}} \dx \left( |\cdot|^{\frac{\m-2}{\m-1}} \lambda \chi' \right) \\
& \quad - \frac{(\m-1)(\m-2)}{6} t^{-\frac{\m-3}{\m-1}} \dx \left( |\cdot|^{\frac{\m-3}{\m-1}} \lambda^2 \chi'' \right) - \frac{(\m-2)(\m-3)}{12} t^{-\frac{\m-3}{\m-1}} |x|^{-\frac{2}{\m-1}} \lambda^2 \chi'' \\
& \quad +\frac{i}{\m}  (-i)^{\m} \chi \sum_{k \in S''_{\m}} C^{(\m)}_k \prod_{l=1}^{\m} (i \phi)^{(k_l)}
+ (-i)^{\m-1} \lambda \chi' \sum_{k \in S''_{\m-1}} C^{(\m-1)}_k \prod_{l=1}^{\m-1} (i \phi)^{(k_l)}  \\
& \quad -i \frac{\m-1}{2} (-i)^{\m-2} \lambda^2 \chi'' \sum_{k \in S''_{\m-2}} C^{(\m-2)}_k \prod_{l=1}^{\m-2} (i\phi)^{(k_l)} \\
& \quad - \frac{(\m-1)(\m-2)}{6} (-i)^{\m-3} \lambda^3 \chi''' \sum_{k \in S'_{\m-3}} C^{(\m-3)}_k \prod_{l=1}^{\m-3} (i\phi)^{(k_l)} \\
& \quad + \frac{i}{\m} (-i)^{\m} \sum_{j=0}^{\m-4} \begin{pmatrix} \m \\ j \end{pmatrix} \lambda^{\m-j} \chi^{(\m-j)} \sum_{k \in S_{j}} C^{(j)}_{k} \prod_{l=1}^j (i \phi)^{(k_l)} 
 \bigg\} e^{i\phi} \\
& = \frac{e^{i\phi}}{t \lambda} \dx \wt{\chi} + O \left( t^{-1} (t^{\frac{\m-1}{\m}} v)^{-\frac{\m}{\m-1}} \xc (\lambda (x-vt)) \right),
\end{align*}
where
\begin{align*}
\wt{\chi}(t,x) := & \lambda \frac{x-vt}{2} \chi (\lambda (x-vt)) - i \frac{\m-1}{2} \lambda^2 t^{\frac{1}{\m-1}} |x|^{\frac{\m-2}{\m-1}} \chi' (\lambda (x-vt)) \\ 
& - \frac{(\m-1)(\m-2)}{6} \lambda^3 t^{\frac{2}{\m-1}} |x|^{\frac{\m-3}{\m-1}} \chi''(\lambda (x-vt))
\end{align*}
and $\xc$ is a nonnegative continuous function supported in $[-\frac{1}{2}, \frac{1}{2}]$.
Therefore, we obtain the following:
\begin{equation} \label{eq:LPsi}
\begin{aligned}
(\Lc \Psi_v) (t,x) = & \frac{e^{i\phi}}{t \lambda} \dx \wt{\chi} + \frac{1}{\m} \left( |\dx|^{\m-1} \dx - i (-i \dx)^{\m} \right) P^- \Psi_v (t,x) \\
&+ O \left( t^{-1} (t^{\frac{\m-1}{\m}} v)^{-\frac{\m}{\m-1}} \xc (\lambda (x-vt)) \right).
\end{aligned}
\end{equation}
Because $\wt{\chi}$ has the same localization as $\chi$, the first term on the right hand side of \eqref{eq:LPsi} is essentially localized at frequency $\xi_v$.
For the sake of completeness, we give a proof here, although the proof is similar to that of Lemma \ref{lem:freq_psi}.

\begin{lem} \label{LPsi1}
For $t \ge 1$ and $v \ge t^{-\frac{\m-1}{\m}}$ and any $a, c \ge 0$, we have
\begin{equation}
\label{LPsi_L1}
\left\| \left( 1- \PNv^+ \right) |\dx|^a (e^{i\phi} \wt{\chi}) \right\|_{L_x^1}
\lesssim_{a,c} t^{\frac{1-a}{\m}} \left( t^{\frac{\m-1}{\m}} v \right)^{-c}.
\end{equation}
\end{lem}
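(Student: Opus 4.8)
The plan is to observe that $e^{i\phi}\wt\chi$ is, up to absorbing the slowly varying factors $|x|^{\frac{\m-2}{\m-1}},|x|^{\frac{\m-3}{\m-1}}$ appearing in $\wt\chi$ into a compactly supported profile, of exactly the same shape as the wave packet $\Psi_v=\chi(\lambda(\cdot-vt))e^{i\phi}$; then the analysis of $\Psi_v$, namely Lemma~\ref{lem:freq_psi} together with the derivation of \eqref{Psi_L1}, can be rerun with only cosmetic changes.

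\emph{Step 1 (reduction).} Set $z:=\lambda(x-vt)$ and $\al:=\lambda^{-1}\xi_v$. From \eqref{lambda} one reads off $\al=(t^{\frac{\m-1}{\m}}v)^{\frac{\m}{2(\m-1)}}$ and $\lambda vt=\al$, so $v\ge t^{-\frac{\m-1}{\m}}$ forces $\al\ge1$, and on $\supp\wt\chi\subset\{x>0\}$ we may write $x=\lambda^{-1}(\al+z)$ with $|z|\le\tfrac12$. Substituting $|x|^{s}=\lambda^{-s}(\al+z)^{s}$ in the definition of $\wt\chi$ and collecting powers of $\lambda,t,v$ (the identity $\lambda^{\frac{\m}{\m-1}}t^{\frac{1}{\m-1}}=\al^{-\frac{\m-2}{\m-1}}$ being the key), I expect to obtain
\[
\wt\chi(t,x)=\wt\chi_0\bigl(\lambda(x-vt),\al\bigr),\qquad
\wt\chi_0(z,\al):=\tfrac{z}{2}\chi(z)-\tfrac{i(\m-1)}{2}\Bigl(1+\tfrac{z}{\al}\Bigr)^{\frac{\m-2}{\m-1}}\chi'(z)-\tfrac{(\m-1)(\m-2)}{6\al}\Bigl(1+\tfrac{z}{\al}\Bigr)^{\frac{\m-3}{\m-1}}\chi''(z).
\]
Since $1+z/\al\in[\tfrac12,\tfrac32]$ for $|z|\le\tfrac12$ and $\al\ge1$, the profile $\wt\chi_0(\cdot,\al)$ lies in $C^{\infty}_{c}([-\tfrac12,\tfrac12])$ with $\sup_{\al\ge1}\|\dx^{k}\wt\chi_0(\cdot,\al)\|_{L^{\infty}}\lesssim_{k}1$ for every $k\in\N_0$; this uniform-in-$\al$ bookkeeping is essentially the only computation in the proof.

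\emph{Step 2 (Fourier transform).} With Step~1 in hand, $e^{i\phi}\wt\chi$ is structurally identical to $\Psi_v$, the fixed bump $\chi$ being replaced by $\wt\chi_0(\cdot,\al)$. I would repeat the proof of Lemma~\ref{lem:freq_psi} line by line — Taylor expansion of $\phi$ about $x=vt$ as in \eqref{phase}, the change of variables $z=\lambda(x-vt)$, and the Fresnel-integral identity — which uses nothing about the profile beyond smoothness, support in $[-\tfrac12,\tfrac12]$, and $\al$-uniform derivative bounds. This yields
\[
\mathcal{F}[e^{i\phi}\wt\chi](t,\xi)=\lambda^{-1}\wt\chi_1\bigl(\lambda^{-1}(\xi-\xi_v),\al\bigr)e^{-\frac{1}{\m}it\xi^{\m}},
\]
with $\wt\chi_1(\cdot,\al)\in\mathcal{S}(\R)$ satisfying $\sup_{\al\ge1}\sup_{\zeta\in\R}|\lr{\zeta}^{k}\dz^{l}\wt\chi_1(\zeta,\al)|\lesssim_{k,l}1$ for all $k,l\in\N_0$, i.e. the exact analogue of \eqref{est:chi1p}. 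Note that, unlike in Lemma~\ref{lem:freq_psi}, no analogue of \eqref{chi11} is needed here: only the size of $\wt\chi_1$, not the value of $\int\wt\chi_1$, enters the $L^{1}$ bound (and indeed $\int\wt\chi_0(z,\al)\,dz\ne1$ in general).

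\emph{Step 3 (the $L^{1}$ estimate).} This reproduces the passage from \eqref{est:chi1p} to \eqref{Psi_L1}. Using $\tfrac{\xi_v}{2}<N_v<2\xi_v$, on the frequency support of $1-\PNv^{+}$ one has $|\xi-\xi_v|\gtrsim\xi_v$, hence $|\lambda^{-1}(\xi-\xi_v)|\gtrsim\lambda^{-1}\xi_v=\al$, so the decay of $\wt\chi_1(\cdot,\al)$ already supplies an arbitrary negative power of $\al\sim(t^{\frac{\m-1}{\m}}v)^{\frac{\m}{2(\m-1)}}$. Writing $(1-\PNv^{+})|\dx|^{a}(e^{i\phi}\wt\chi)$ as an oscillatory integral in $\xi$, bounding the part $|\xi|\lesssim\xi_v$ crudely (where $\wt\chi_1$ is negligible), and integrating by parts twice in $\xi$ on the complementary region (each integration by parts costing $\lesssim tv\,|x|^{-1}$) gives, exactly as for $\Psi_v$, the pointwise bound
\[
\bigl|(1-\PNv^{+})|\dx|^{a}(e^{i\phi}\wt\chi)(t,x)\bigr|\lesssim_{a,l}t^{-\frac{a}{\m}}\bigl(t^{\frac{\m-1}{\m}}v\bigr)^{-l}\min\bigl(1,|x|^{-1}tv\bigr)^{2}
\]
for all $a,l\ge0$. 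Integrating in $x$ and using $\int_{\R}\min(1,|x|^{-1}tv)^{2}\,dx\lesssim tv=t^{\frac{1}{\m}}\bigl(t^{\frac{\m-1}{\m}}v\bigr)$ then gives \eqref{LPsi_L1}. I do not anticipate any genuine obstacle: the substance is already contained in Lemma~\ref{lem:freq_psi} and the $\Psi_v$ estimates, and the only point requiring care is that every implicit constant remains independent of $\al\ge1$ (equivalently, of $t^{\frac{\m-1}{\m}}v\ge1$) throughout.
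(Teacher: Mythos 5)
Your proposal is correct and follows essentially the same route as the paper: the paper likewise rewrites $\wt{\chi}(t,x)=\wt{\chi}_0(\lambda(x-vt),\lambda vt)$ with a profile uniformly Schwartz in $\al=\lambda vt=\lambda^{-1}\xi_v\ge 1$, reruns the computation of Lemma \ref{lem:freq_psi} to get $\mathcal{F}[e^{i\phi}\wt{\chi}]=\frac{1}{\sqrt{\m-1}}\lambda^{-1}\wt{\chi}_1(\lambda^{-1}(\xi-\xi_v),\al)e^{-\frac{1}{\m}it\xi^{\m}}$ with bounds analogous to \eqref{est:chi1p}, and then integrates by parts off $[\frac{N_v}{4},4N_v]$ to obtain the pointwise bound $\lesssim_l t^{-\frac{a}{\m}}(t^{\frac{\m-1}{\m}}v)^{-l}\min(1,|x|^{-1}tv)^2$ and hence \eqref{LPsi_L1}. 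Your bookkeeping (the identity $\lambda^{\frac{\m}{\m-1}}t^{\frac{1}{\m-1}}=\al^{-\frac{\m-2}{\m-1}}$, the uniformity in $\al$, and the observation that no analogue of \eqref{chi11} is needed) matches the paper's argument.
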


\begin{proof}
We write
\[
\wt{\chi}(t,x) := \wt{\chi}_0(\lambda (x-vt), \lambda vt),
\]
where
\begin{align*}
\wt{\chi}_0 (z,\al) := & \frac{z}{2} \chi (z) - i \frac{\m-1}{2} \al^{-\frac{\m-2}{\m-1}} |z+\al|^{\frac{\m-2}{\m-1}} \chi' (z) \\
&- \frac{(\m-1)(\m-2)}{6} \al^{-\frac{2(\m-2)}{\m-1}} |z+\al|^{\frac{\m-3}{\m-1}} \chi''(z).
\end{align*}
The same calculation as in the proof of Lemma \ref{lem:freq_psi} yields
\[
\mathcal{F}[e^{i\phi} \wt{\chi}] (t,\xi)
:= \frac{1}{\sqrt{\m-1}} \lambda ^{-1} \wt{\chi}_1 \left( \lambda^{-1}(\xi-\xi_v), \lambda^{-1}\xi_v \right) e^{-\frac{1}{\m} it \xi^{\m}},
\]
where
\[
\wt{\chi}_1(\zeta,\al) := (1-i) \sqrt{\frac{\m-1}{2}} e^{\frac{i}{\m} \sum_{l=0}^{\m-2} \left( \begin{smallmatrix} \m \\ l \end{smallmatrix} \right) \zeta^{\m-l} \al^{-\m+l+2}} \mathcal{F} [e^{\frac{i}{2(\m-1)} z^2 + i R(z, \al)} \wt{\chi}_0(\cdot,\al) ] (\zeta).
\]
Since $\wt{\chi}_0(\cdot, \al) \in \Sc (\R)$ for $\al \ge 1$, we have
\[
\sup_{\al \ge 1} \sup_{ \zeta \in \R} |\lr{\zeta}^k \dz^l \wt{\chi}_1 (\zeta, \al)| \lesssim_{k,l} 1.
\]
From $|\xi - \xi_v| \ge \frac{\xi_v}{2}$ provided that $\xi \notin [ \frac{N_v}{4}, 4 N_v]$, integration by parts yields
\[
\left| \left( 1- \PNv^+ \right) |\dx|^a (e^{i\phi} \wt{\chi}) (x) \right|
\lesssim_l t^{-\frac{a}{\m}} \left( t^{\frac{\m-1}{\m}} v \right)^{-l} \min (1, |x|^{-1} tv)^2,
\]
which implies the desired bound.
\end{proof}

\subsection{Testing by wave packets}

Let $C_2>0$ be the constant appearing in \eqref{est:chi1p} with $k=2$ and $l=0$, that is,
\[
\sup_{\al \ge 1} \sup_{ \zeta \in \R} \left| \lr{\zeta}^2 \chi_1 (\zeta, \al) \right| \le C_2.
\]
For $t \ge 1$, we define
\[
\Ot := \left\{ v \in \R_+ \colon v \ge C_{\ast} t^{-\frac{\m-1}{\m}} \right\},
\]
where
\begin{equation} \label{constast}
C_{\ast} := \left( 2(C_1+C_2+1) \right)^{\frac{2(\m-1)}{\m}}.
\end{equation}
Here, $C_1$ is the constant appearing in \eqref{chi11}.
The large constant $C_{\ast}$ is needed to show the pointwise estimate \eqref{ugfs} in the frequency space.

We observe that the output $\gamma (t,v)$ defined by \eqref{output} is a ``good'' approximation of $u$ for $v \in \Ot$.

\begin{prop} \label{prop:approx}
For $t \ge 1$ and $k=0,1, \dots, \m-2$, we have the bounds
\begin{equation}
\dx^k u^+ (t,vt) = i^k \lambda v^{\frac{k}{\m-1}} e^{i \phi (t,vt)} \gamma (t,v) +R_k(t,v),
\label{ugsp1}
\end{equation}
where $R_k$ is a function satisfying
\begin{align*}
\left\| t^{\frac{k+1}{\m}} (t^{\frac{\m-1}{\m}} v)^{-\frac{k}{\m-1}+\frac{3(\m-2)}{4(\m-1)}} R_k(t,v) \right\|_{L^{\infty}_v (\Ot)}
&\lesssim t^{-\frac{1}{2\m}} \| u(t) \|_{\wt{X}}, \\
\left\| t^{\frac{k}{\m}+\frac{\m+1}{2\m}} (t^{\frac{\m-1}{\m}} v)^{-\frac{k}{\m-1}+\frac{\m-2}{2(\m-1)}} R_k (t,v) \right\|_{L^2_v (\Ot)}
&\lesssim t^{-\frac{1}{2\m}} \| u(t) \|_{\wt{X}}.
\end{align*}
Moreover, in the frequency space, we have
\begin{equation} \label{ugfs}
\wh{u} (t,\xi_v) = \sqrt{\m-1} e^{-\frac{1}{\m} it \xi_v^{\m}} \gamma (t,v) + R_{\xi} (t,v),
\end{equation}
where $R_{\xi}$ is a function satisfying
\begin{align*}
\left\| (t^{\frac{\m-1}{\m}} v)^{\frac{\m-2}{4(\m-1)}} R_{\xi} (t,v) \right\|_{L^{\infty}_v (\Ot)}
&\lesssim t^{-\frac{1}{2\m}} \| u(t) \|_{\wt{X}}, \\
\left\| t^{\frac{\m-1}{2\m}} R_{\xi} (t,v) \right\|_{L^2_v (\Ot)}
&\lesssim t^{-\frac{1}{2\m}} \| u(t) \|_{\wt{X}}.
\end{align*}
\end{prop}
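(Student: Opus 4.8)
The plan is to show that the output $\gamma(t,v)$ samples the positive-frequency part $u^+:=P^+u$ near the phase-space point $(vt,\xi_v)$, so that $\gamma(t,v)$, $u^+(t,vt)$ and $\wh u(t,\xi_v)$ are mutually proportional up to controlled errors. The first step is to discard negative frequencies: since $u$ is real, $u=u^++\ol{u^+}$, and since $\mathcal F[\PNv^+\Psi_v]$ is supported in $\R_+$ near $\xi_v$ while $\mathcal F[\ol{u^+}]$ is supported in $\R_-$, Plancherel gives $\int_\R\ol{u^+}\,\ol{\PNv^+\Psi_v}\,dx=0$, whence
\[
\gamma(t,v)=\int_\R u^+\,\ol{\Psi_v}\,dx+\ol{\int_\R u^+\,\ol{(1-\PNv^+)\Psi_v}\,dx}.
\]
By Hölder together with \eqref{Psi_L1} and the decay estimates of Proposition~\ref{prop:est|u|} and Corollary~\ref{cor:uL2}, the last term decays faster than any power of $t^{\frac{\m-1}{\m}}v$ (which is $\ge C_{\ast}$ on $\Ot$) and can be absorbed, with room to spare, into $R_k$ and $R_\xi$.

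For \eqref{ugsp1} I would rewrite $\int_\R u^+\,\ol{\Psi_v}\,dx=\int_\R u^+(t,x)e^{-i\phi(t,x)}\chi(\lambda(x-vt))\,dx$, which is localized to $x>0$ by $\supp\chi(\lambda(\cdot-vt))\subset[\tfrac{vt}{2},\tfrac{3vt}{2}]$, and use $\int_\R\chi(\lambda(\cdot-vt))\,dx=\lambda^{-1}$ together with the identity $\dx(e^{-i\phi}f)=-it^{-\frac{1}{\m-1}}e^{-i\phi}\J_{+}f$ valid on $\R_+$ (as in the proof of Proposition~\ref{prop:est|u|}). Taylor's theorem about $x=vt$ then gives $u^+(t,vt)=\lambda e^{i\phi(t,vt)}\gamma(t,v)+R_0$, the remainder $R_0$ being controlled by $\lambda^{-1}t^{-\frac{1}{\m-1}}$ times $\J_{+}u^+$ on $x\sim vt$. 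For $k\ge1$ I would expand, via Leibniz and Lemma~\ref{lem:diff},
\[
\dx^k u^+(t,vt)=\sum_{j=0}^k\binom{k}{j}(-it^{-\frac{1}{\m-1}})^j(\dx^{k-j}e^{i\phi})(t,vt)\,e^{-i\phi(t,vt)}\,(\J_{+}^j u^+)(t,vt),
\]
and replacing, in the $j=0$ term, $\dx^ke^{i\phi}$ by its leading part $(i\dx\phi(t,vt))^ke^{i\phi(t,vt)}=(i\xi_v)^ke^{i\phi(t,vt)}$ produces, using $\xi_v^k=v^{\frac{k}{\m-1}}$ and the $k=0$ relation, the asserted main term $i^k\lambda v^{\frac{k}{\m-1}}e^{i\phi(t,vt)}\gamma(t,v)$. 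The remainder $R_k$ then collects $i^kv^{\frac{k}{\m-1}}R_0$, the terms carrying $\dx^{\ell}\phi(t,vt)$ with $\ell\ge2$ (note $\dx^2\phi(t,vt)=\tfrac{\lambda^2}{\m-1}$, higher ones $O(t^{-\frac{1}{\m-1}})$ times powers of $vt$), each gaining a factor $\lambda^2\xi_v^{-2}=(t^{\frac{\m-1}{\m}}v)^{-\frac{\m}{\m-1}}$, and the $j\ge1$ terms. For the last ones I would split $u^+$ on $x\sim vt$ into the $O(1)$ hyperbolic blocks $\uhp_{N}$ with $N\sim N_v$ plus an elliptic remainder, expand $\J_{+}^j$ in terms of $\J_{+}\dx^{\ell}\uhp$ and $\dx^{\ell}\uhp$, bound these by Lemma~\ref{lem:he_freq_est} and Corollary~\ref{cor:uL2}, and discard the elliptic remainder near $x\sim vt$ using \eqref{est:uepp} (here the large constant $\kk$ in the definition of $\uhp_N$ is used). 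The weighted $L^\infty_v(\Ot)$ and $L^2_v(\Ot)$ bounds for $R_k$ then follow by reading the spatial estimates of Corollary~\ref{cor:uL2} and Proposition~\ref{prop:est|u|} along $x=vt$: a pointwise bound at $x=vt$ feeds the $L^\infty_v$-norm (converting $L^2_x$ to $L^\infty_x$ by Gagliardo–Nirenberg as in Proposition~\ref{prop:est|u|}), while, since $dx=t\,dv$ along $x=vt$, the $L^2_v(\Ot)$-norms reduce to $t^{-1/2}$ times spatial $L^2$-norms; the exact exponents of $t^{\frac{\m-1}{\m}}v$ in the two weights are pinned down by matching the spatial weights $\lr{t^{-\frac{1}{\m}}x}^{\cdots}$ under $x=vt$.

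For \eqref{ugfs} I would instead apply Plancherel to $\int_\R u^+\,\ol{\Psi_v}\,dx$ (to which $\gamma$ reduces), insert the formula of Lemma~\ref{lem:freq_psi}, and cancel the oscillation via $\wh u(t,\xi)=e^{-\frac{1}{\m}it\xi^{\m}}\mathcal F[\lp(-t)u](t,\xi)$ for $\xi>0$. After the change of variables $\xi=\xi_v+\lambda\zeta$,
\[
\gamma(t,v)=\frac{1}{\sqrt{\m-1}}\int_\R\mathcal F[\lp(-t)u](t,\xi_v+\lambda\zeta)\,\ol{\chi_1(\zeta,\lambda^{-1}\xi_v)}\,d\zeta
\]
up to a negligible $\chi_1$-tail (the part of the integral over $\xi\le0$, harmless since $\lambda^{-1}\xi_v$ is large). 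Taylor-expanding $\mathcal F[\lp(-t)u]$ in $\xi$ about $\xi_v$ and invoking \eqref{chi11} (with $\lambda^{-1}\xi_v=(t^{\frac{\m-1}{\m}}v)^{\frac{\m}{2(\m-1)}}\ge2(C_1+C_2+1)>1$ on $\Ot$) isolates the main term $\tfrac{1}{\sqrt{\m-1}}\mathcal F[\lp(-t)u](t,\xi_v)=\tfrac{1}{\sqrt{\m-1}}e^{\frac{1}{\m}it\xi_v^{\m}}\wh u(t,\xi_v)$, hence \eqref{ugfs}, with $R_\xi$ equal to $-(\lambda^{-1}\xi_v)^{-1}$ times a bounded multiple of $\wh u(t,\xi_v)$ plus the first-order Taylor remainder. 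For the remainder I would use $\partial_\xi\mathcal F[\lp(-t)u]=\mathcal F[-ix\lp(-t)u]$ and $x\lp(-t)u=\lp(-t)\J u$, so that Cauchy–Schwarz in $\zeta$ bounds it by $O(\lambda^{1/2}\|\J u(t)\|_{L^2})$; together with the Jacobian $d\xi=(\m-1)^{-1}v^{\frac{1}{\m-1}-1}\,dv$ and $\|\J u(t)\|_{L^2}\lesssim\|u(t)\|_{\wt{X}}$ this yields precisely the claimed weighted $L^\infty_v(\Ot)$ and $L^2_v(\Ot)$ bounds, while the $\wh u(t,\xi_v)$ term is handled by returning to physical space and using Proposition~\ref{prop:est|u|}.

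The main obstacle I anticipate is the error bookkeeping in the physical-space step: one must extract from the full Leibniz/Lemma~\ref{lem:diff} expansion of $\dx^k u^+(t,vt)$ the single leading term $\propto\xi_v^ku^+(t,vt)$ even though the higher $x$-derivatives of $\phi$ at $x=vt$ are only polynomially small in $t^{\frac{\m-1}{\m}}v$, and then verify that every remaining term gains a genuine power $t^{-\frac{1}{2\m}}$ and decays in $t^{\frac{\m-1}{\m}}v$ at exactly the rate prescribed by the two weights in \eqref{ugsp1}. This forces the careful split of $u^+$, on $x\sim vt$, into the $O(1)$ hyperbolic blocks $\uhp_{N}$ with $N\sim N_v$ (whose $\J_{+}$-derivatives are controlled by Lemma~\ref{lem:he_freq_est} and Corollary~\ref{cor:uL2}) and an elliptic remainder that is negligible there only thanks to the large constants $\kk$ in the definition of $\uhp_N$ and $C_{\ast}$ in the definition of $\Ot$; checking that the induced weights coincide with those asserted is the delicate point.
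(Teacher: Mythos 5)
Your frequency-space argument for \eqref{ugfs} and your $k=0$ physical-space step are essentially the paper's proof (Plancherel plus Lemma \ref{lem:freq_psi}, a first-order Taylor expansion of $\wh{u}(t,\xi)e^{\frac{1}{\m}it\xi^{\m}}$ controlled through $\wh{\J u}$ and $\|\J u(t)\|_{L^2}\lesssim \|u(t)\|_{\wt{X}}$, the $\xi\le 0$ tail and the deviation of $\int\chi_1$ from $1$ handled by $C_1$, $C_2$ and the choice of $C_{\ast}$; and, for $k=0$, comparison of $u(t,vt)$ with the wave-packet average via one application of the fundamental theorem of calculus, with $\dx(e^{-i\phi}\uhp)=-it^{-\frac{1}{\m-1}}e^{-i\phi}\J_{+}\uhp$ estimated in $L^2$ on the support of $\chi(\lambda(\cdot-vt))$).

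The treatment of $k\ge 1$ in \eqref{ugsp1}, however, has a genuine gap. Your pointwise Leibniz expansion of $\dx^k u^+(t,vt)=\sum_j\binom{k}{j}(\dx^{k-j}e^{i\phi})\,\dx^{j}(e^{-i\phi}u^+)$ produces, for $j\ge 1$, the quantities $(\J_{+}^{\,j}u^+)(t,vt)$ evaluated at a single point, with $j$ running up to $k\le\m-2$. For $j\ge 2$ these are (rescaled) \emph{multiple} powers of the vector field acting on $u$, and nothing in the paper controls them: the norm $\|u\|_{\wt{X}}$ carries only one factor of $\J$, and Lemma \ref{lem:he_freq_est} and Corollary \ref{cor:uL2} give only weighted $L^2_x$ bounds on $\J_{+}\dx^{\ell}\uhp$. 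Even for $j=1$, the $L^{\infty}_v$ bound you need is a \emph{pointwise} bound on $\J_{+}u^+$ at $x=vt$; the Gagliardo--Nirenberg conversion you invoke is used in Proposition \ref{prop:est|u|} only for $\dx^k\uhp$ (it trades $\|f\|_{L^2}$ and $\|\J_{+}f\|_{L^2}$ for $\|f\|_{L^\infty}$), and applying it to $f=\J_{+}\dx^{\ell}\uhp$ would require a further power of the vector field in $L^2$, which is not available; for $j=k=\m-2$ the needed object $\J_{+}^{\,\m-2}u^+$ is hopeless with the estimates at hand. The paper avoids this entirely by never evaluating $\J_{+}$-quantities pointwise and never iterating $\J_{+}$ on $u$: it first proves, by iterating the algebraic identity
\[
\uhp\,\ol{\Psi_v}=-iv^{-\frac{1}{\m-1}}\dx\uhp\,\ol{\Psi_v}
-it^{\frac{1}{\m-1}}\bigl(x^{-\frac{1}{\m-1}}-(vt)^{-\frac{1}{\m-1}}\bigr)\dx\uhp\,\ol{\Psi_v}
+it^{\frac{1}{\m-1}}x^{-\frac{1}{\m-1}}\dx\bigl(e^{-i\phi}\uhp\bigr)\chi(\lambda(x-vt)),
\]
that $i^k\lambda v^{\frac{k}{\m-1}}\gamma(t,v)=\lambda\int\dx^k\uhp\,\ol{\Psi_v}\,dx+R_k$, each step costing only one $\J_{+}$ \emph{under the integral}, estimated by Cauchy--Schwarz, \eqref{eq:L^2v}, \eqref{uhpL2} and \eqref{uhpL22}; only then does it compare $\dx^k\uhp(t,vt)$ with its wave-packet average by a single fundamental-theorem-of-calculus step, which again produces only $\J_{+}\dx^k\uhp$ integrated against $\chi$. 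To repair your argument you would have to replace the pointwise expansion at $x=vt$ by this (or an equivalent) transfer of all $k$ derivatives onto $u$ inside $\gamma$; as written, the $j\ge 2$ terms cannot be bounded by the available estimates.
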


\begin{proof}
First, we show that
\begin{equation} \label{eq:L^2v}
\left\| v^{-\frac{\m-2}{2(\m-1)}} \int_{\R} f(t,x) \chi (\lambda (x-vt)) dx \right\|_{L^2_v (\Ot)}
\lesssim \| f(t,\cdot) \|_{L^2_x ([t^{\frac{1}{\m}}, \infty))}
\end{equation}
holds true.
By a change of variables using $z = \lambda (x-vt)$,
\[
\text{L.H.S. of \eqref{eq:L^2v}}
= t^{\frac{1}{2}} \left\| \int_{\R} f \left( t, t^{\frac{1}{2}} v^{\frac{\m-2}{2(\m-1)}} z + vt \right) \chi (z) dz \right\|_{L^2_v(\Ot)}.
\]
Setting $\wt{v} = t^{\frac{1}{2}} v^{\frac{\m-2}{2(\m-1)}} z + vt$, we note that
\begin{align*}
t^{-\frac{1}{\m}} \wt{v} &= t^{\frac{\m-1}{\m}} v \Big\{ 1+ (t^{\frac{\m-1}{\m}} v)^{-\frac{\m}{2(\m-1)}} z \Big\} \ge 1, \\
\frac{d \wt{v}}{dv} &= t \Big\{ 1+\frac{\m-2}{2(\m-1)} (t^{\frac{\m-1}{\m}} v)^{-\frac{\m}{2(\m-1)}} z \Big\} \ge \frac{t}{2},
\end{align*}
for $v \in \Ot$ and $|z| \le \frac{1}{2}$.
Then, we have
\begin{align*}
\text{L.H.S. of \eqref{eq:L^2v}}
&\lesssim t^{\frac{1}{2}} \int_{\R} \left\| f \left( t, t^{\frac{1}{2}} v^{\frac{\m-2}{2(\m-1)}} z + vt \right) \right\|_{L^2_v(\Ot)} \chi (z) dz \\
&\lesssim \| f(t,\cdot) \|_{L^2_x ([t^{\frac{1}{\m}}, \infty))}.
\end{align*}

Second, we show that $u$ in the definition of $\gamma$ is replaced with $\uhp$ up to error terms;
\begin{equation} \label{gammaapp1}
i^k \lambda v^{\frac{k}{\m-1}} \gamma (t,v)
= i^k \lambda v^{\frac{k}{\m-1}} \int_{\R} \uhp (t,x) \ol{\Psi_v (t,x)} dx + R_k(t,v).
\end{equation}
In fact, Proposition \ref{prop:est|u|} and \eqref{Psi_L1} imply
\begin{equation} \label{estu-}
\Big| \int_{\R} \ol{u^+ (t,x)} \ol{\Psi_v (t,x)} dx \Big|
\le \| u \|_{L^{\infty}} \left\| P^- \Psi_v(t) \right\|_{L^1}
\lesssim (t^{\frac{\m-1}{\m}} v)^{-1} \cdot t^{-\frac{1}{2\m}} \| u(t) \|_{\wt{X}}.
\end{equation}
Moreover, \eqref{est:uepp} yields
\begin{align*}
\Big| \int_{\R} \ue (t,x) \ol{\Psi_v (t,x)} dx \Big|
& \lesssim \lambda^{-1} (t^{\frac{\m-1}{\m}}v)^{-\frac{2\m-3}{2(\m-1)}} \left\| (t^{-\frac{1}{\m}} |x|)^{\frac{2\m-3}{2(\m-1)}} \ue (t) \right\|_{L^{\infty}} \\
& \lesssim (t^{\frac{\m-1}{\m}} v)^{-\frac{1}{2}} \cdot t^{-\frac{1}{2\m}} \| u(t) \|_{\wt{X}}.
\end{align*}
In addition, by \eqref{eq:L^2v} and \eqref{uepL2}, we have
\begin{align*}
\left\| \int_{\R} \ue (t,x) \ol{\Psi_v (t,x)} dx \right\|_{L^2_v (\Ot)}
& \lesssim t^{-\frac{\m-2}{2(\m-1)}} \left\| |x|^{\frac{\m-2}{2(\m-1)}} \ue (t) \right\|_{L^2_x([t^{\frac{1}{\m}},\infty))} \\
& \lesssim t^{-\frac{\m-2}{2\m}} \left\| \lr{t^{-\frac{1}{\m}} x}^{\frac{\m-2}{2(\m-1)}} \ue (t) \right\|_{L^2} \\
& \lesssim t^{-\frac{\m-1}{2\m}} \cdot t^{-\frac{1}{2\m}} \| u(t) \|_{\wt{X}}.
\end{align*}
Hence, from $\lambda v^{\frac{k}{\m-1}} = t^{-\frac{k+1}{\m}} (t^{\frac{\m-1}{\m}} v)^{\frac{k}{\m-1}-\frac{\m-2}{2(\m-1)}}$, we obtain \eqref{gammaapp1}.

Third, we observe that the equation
\begin{equation} \label{gammaapp2}
i^k \lambda v^{\frac{k}{\m-1}} \gamma (t,v) 
= \lambda \int_{\R} \dx^k\uhp (t,x) \ol{\Psi_v (t,x)} dx + R_k (t,v).
\end{equation}
holds true.
We note that
\begin{align*}
\uhp (t,x) \ol{\Psi_v (t,x)}
=& -i v^{-\frac{1}{\m-1}} \dx \uhp (t,x) \ol{\Psi_v(t,x)}  \\
&- i t^{\frac{1}{\m-1}} \left( x^{-\frac{1}{\m-1}}- (vt)^{-\frac{1}{\m-1}} \right) \dx \uhp (t,x) \ol{\Psi_v (t,v)} \\
& +i t^{\frac{1}{\m-1}} x^{-\frac{1}{\m-1}} \dx ( \uhp e^{-i\phi}) (t,x) \chi (\lambda (x-vt)).
\end{align*}
Here, \eqref{est:uhpp} yields
\begin{align*}
& v^{-\frac{k-1}{\m-1}} \left| \int_{\R} t^{\frac{1}{\m-1}} \left( x^{-\frac{1}{\m-1}} - (vt)^{-\frac{1}{\m-1}} \right) \dx^k \uhp (t,x)  \ol{\Psi_v(t,x)} dx \right| \\
& \lesssim t^{\frac{k}{\m}} (t^{\frac{\m-1}{\m}} v)^{-\frac{k}{\m-1}-\frac{\m}{2(\m-1)}} \int_{\R} \left| \dx^k \uhp (t,x) \chi (\lambda (x-vt)) \right| dx \\
& \lesssim (t^{\frac{\m-1}{\m}} v)^{-\frac{1}{2}} \cdot t^{-\frac{1}{2\m}} \| u(t) \|_{\wt{X}}.
\end{align*}
By \eqref{eq:L^2v} and \eqref{uhpL22}, we have
\begin{align*}
& \left\| v^{-\frac{k-1}{\m-1}} \int_{\R} t^{\frac{1}{\m-1}} \left( x^{-\frac{1}{\m-1}} - (vt)^{-\frac{1}{\m-1}} \right) \dx^k \uhp (t,x)  \ol{\Psi_v(t,x)} dx \right\|_{L^2_v(\Ot)} \\
& \lesssim t^{-\frac{1}{2}} \left\| v^{-\frac{k}{\m-1}-\frac{\m}{2(\m-1)}} \int_{\R} \dx^k \uhp (t,x) \chi (\lambda (x-vt)) dx \right\|_{L^2_v(\Ot)} \\
& \lesssim t^{-\frac{1}{2}} \left\| \left( \frac{x}{t} \right)^{-\frac{k+1}{\m-1}} \dx^k \uhp \right\|_{L^2_x} \\
& \lesssim t^{-\frac{\m-1}{2\m}} \cdot t^{-\frac{1}{2\m}} \| u(t) \|_{\wt{X}}.
\end{align*}
Moreover, \eqref{uhpL2} implies
\begin{align*}
& \left| v^{-\frac{k}{\m-1}}\int_{\R} t^{\frac{1}{\m-1}} x^{-\frac{1}{\m-1}} \dx \left( e^{-i \phi} \dx^k\uhp \right) (t,x) \chi (\lambda (x-vt)) dx \right| \\
& \lesssim t^{\frac{k}{\m-1}} (tv)^{-1} \lambda^{-\frac{1}{2}} \left\| |x|^{-\frac{k-\m+2}{\m-1}} \J_+ \dx^k \uhp \right\|_{L^2} \\
& \lesssim (t^{\frac{\m-1}{\m}} v)^{-\frac{3\m-2}{4(\m-1)}} \cdot t^{-\frac{1}{2\m}} \| u(t) \|_{\wt{X}}
\end{align*}
and
\begin{align*}
& \left\| v^{-\frac{k}{\m-1}}\int_{\R} t^{\frac{1}{\m-1}} x^{-\frac{1}{\m-1}} \dx (e^{-i \phi} \dx^k\uhp) (t,x) \chi (\lambda (x-vt)) dx \right\|_{L^2_v(\Ot)} \\
& \lesssim \left\| (t^{\frac{\m-1}{\m}} v)^{-\frac{3\m-2}{4(\m-1)}} \right\|_{L^2_v(\Ot)} \cdot t^{-\frac{1}{2\m}} \| u(t) \|_{\wt{X}} \\
& \lesssim t^{-\frac{\m-1}{2\m}} \cdot t^{-\frac{1}{2\m}} \| u(t) \|_{\wt{X}}.
\end{align*}
These estimates and \eqref{gammaapp1} show \eqref{gammaapp2}.

We are now in position to prove \eqref{ugsp1}.
We set $w_k (t,x) := e^{-i\phi (t,x)} \dx^k \uhp (t,x)$.
By \eqref{uepL2}, \eqref{est:uepp}, and \eqref{gammaapp2}, we have
\begin{align*}
& \dx^k u^+ (t,vt) - i^k \lambda v^{\frac{k}{\m-1}} e^{i \phi (t,vt)} \gamma (t,v) \\
& = \dx^k \uhp (t,vt) - \lambda e^{i\phi (t,vt)} \int_{\R} \dx^k \uhp (t,x) \ol{\Psi_v(t,v)} dx + R_k (t,v) \\
& = e^{i\phi (t,vt)} \lambda \int_{\R} \left( w_k (t,vt) - w_k (t,x) \right) \chi (\lambda (x-vt)) dx + R_k (t,v).
\end{align*}
With a change of variables using $z = \lambda (x-vt)$ and \eqref{uhpL2}, we see that
\begin{align*}
& \int_{\R} \left| \left( w_k (t,vt) - w_k (t,x) \right) \chi (\lambda (x-vt)) \right| dx \\
& \le \lambda^{-1} \int_{\R} \left| \left( w_k (t,vt) - w_k (t, \lambda^{-1} z +vt) \right) \chi (z) \right| dz \\
& = \lambda^{-2} \int_{\R} \left| \int_0^1 \dx w_k (t, vt+(1-\theta) \lambda^{-1} z) d\theta \cdot z \chi (z) \right| dz \\
& \lesssim t^{-\frac{1}{\m-1}} \lambda^{-\frac{3}{2}} (tv)^{\frac{k-\m+2}{\m-1}} \left\| |x|^{-\frac{k-\m+2}{\m-1}} \J_+ \dx^k \uhp (t) \right\|_{L^2} \\
& \lesssim t^{-\frac{k}{\m}} (t^{\frac{\m-1}{\m}} v)^{\frac{k}{\m-1}-\frac{\m-2}{4(\m-1)}} \cdot t^{-\frac{1}{2\m}} \| u(t) \|_{\wt{X}}.
\end{align*}
From \eqref{lambda}, we obtain the $L^{\infty}$-estimate in \eqref{ugsp1}.

A change of variables using$z = \lambda (x-vt)$ and $\wt{v} = vt+(1-\theta) \lambda^{-1} z$, and \eqref{uhpL2} give
\begin{align*}
& \left\| t^{\frac{k}{\m}+\frac{\m+1}{2\m}} (t^{\frac{\m-1}{\m}} v)^{-\frac{k}{\m-1}+\frac{\m-2}{2(\m-1)}} \lambda \int_{\R} |w_k (t,vt) - w_k (t,x) | \chi (\lambda (x-vt)) dx \right\|_{L^2_v (\Ot)} \\
& \le t^{\frac{3\m^2-6\m+1}{2\m (\m-1)}} \int_{\R} \int _0^1 \left\| v^{-\frac{k-\m+2}{\m-1}} \left( \J_+ \dx^k \uhp \right) (t, vt+(1-\theta) \lambda^{-1} z) \right\|_{L^2_v (\Ot)} |z \chi (z)| d\theta dz  \\
& \le t^{\frac{k}{\m-1}-\frac{1}{2\m}} \left\| \wt{v}^{-\frac{k-\m+2}{\m-1}} \left( \J_+ \dx^k \uhp \right) (t,\wt{v}) \right\|_{L^2_{\wt{v}}} \\
& \lesssim t^{-\frac{1}{2\m}} \| u(t) \|_{\wt{X}},
\end{align*}
which shows the $L^2$-estimate in \eqref{ugsp1}.

Next, we consider the estimates in the frequency spaces.
Because
\begin{align*}
\left| \int_{-\infty}^0 \lambda^{-1} \chi_1 (\lambda^{-1} (\xi-\xi_v), \lambda^{-1}\xi_v) d\xi \right|
&= \left| \int_{-\infty}^{-\lambda^{-1} \xi_v} \chi_1 (\zeta, \lambda^{-1} \xi_v) d\zeta \right| \\
&\le C_2 (t^{\frac{\m-1}{\m}} v)^{-\frac{\m}{2(\m-1)}},
\end{align*}
Proposition \ref{prop:est|u|}, Lemma \ref{lem:freq_psi}, \eqref{Psi_L1}, and \eqref{estu-} yield
\begin{align*}
& \left| \wh{u}(t,\xi_v) - \sqrt{\m-1} e^{-\frac{1}{\m} it\xi_v^{\m}} \gamma (t,v) \right| \\
& \le \left| \int_{\R_+} \left( \wh{u}(t,\xi_v) e^{\frac{1}{\m} it\xi_v^{\m}} - \wh{u}(t,\xi) e^{\frac{1}{\m} it\xi^{\m}} \right) \lambda^{-1} \ol{\chi_1 (\lambda^{-1} (\xi-\xi_v), \lambda^{-1}\xi_v)} d\xi \right| \\
& \quad + (C_1+C_2) (t^{\frac{\m-1}{\m}} v)^{-\frac{\m}{2(\m-1)}} | \wh{u} (t,\xi_v)| + C (t^{\frac{\m-1}{\m}} v)^{-1} \cdot t^{-\frac{1}{2\m}} \| u(t) \|_{\wt{X}}.
\end{align*}
With a change of variables using $\zeta = \lambda^{-1} (\xi-\xi_v)$, we have
\begin{equation} \label{eq:ftuinf}
\begin{aligned}
& \left| \int_{\R_+} \left( \wh{u}(t,\xi_v) e^{\frac{1}{\m} it\xi_v^{\m}} - \wh{u}(t,\xi) e^{\frac{1}{\m} it\xi^{\m}} \right) \lambda^{-1} \ol{\chi_1 (\lambda^{-1} (\xi-\xi_v), \lambda^{-1}\xi_v)} d\xi \right| \\
& \le \int_{\R} |\xi-\xi_v| \int_0^1 \left| \wh{\J u} (t,\theta (\xi_v-\xi) + \xi) \right| d\theta \lambda^{-1} |\chi_1 (\lambda^{-1} (\xi-\xi_v), \lambda^{-1}\xi_v)| d\xi \\
& = \lambda \int_{\R} \int_0^1 \left| \wh{\J u} (t, \xi_v + \lambda \zeta (1-\theta)) \right| d\theta |\zeta \chi_1 (\zeta, \lambda^{-1} \xi_v)| d\zeta.
\end{aligned}
\end{equation}
Because
\[
|\wh{u}(t,\xi_v)| \le \left| \wh{u}(t,\xi_v) - \sqrt{\m-1} e^{-\frac{1}{\m} it\xi_v^{\m}} \gamma (t,v) \right| + \sqrt{\m-1} |\gamma (t,v)|
\]
and $\chi_1 (\cdot ,\al) \in \mathcal{S} (\R)$ for $\al \ge 1$, by Proposition \ref{prop:est|u|}, \eqref{lambda}, and \eqref{constast}, we have
\begin{align*}
& \Big| \wh{u}(t,\xi_v) - \sqrt{\m-1} e^{-\frac{1}{\m} it\xi_v^{\m}} \gamma (t,v) \Big| \\
& \lesssim \lambda^{\frac{1}{2}} \| \J u (t) \|_{L^2} + (t^{\frac{\m-1}{\m}} v)^{-\frac{\m}{2(\m-1)}} | \gamma (t,v) | +  (t^{\frac{\m-1}{\m}} v)^{-1} \cdot t^{-\frac{1}{2\m}} \| u(t) \|_{\wt{X}}.\\
& \lesssim (t^{\frac{\m-1}{\m}} v)^{-\frac{\m-2}{4(\m-1)}} \cdot t^{-\frac{1}{2\m}} \| u(t) \|_{\wt{X}},
\end{align*}
which shows the $L^{\infty}$-estimate in \eqref{ugfs}.

For the $L^2$-estimate in the frequency space, we change variables using $\wt{\mathrm{v}} = \xi_v + \lambda \zeta (1-\theta)$.
Because
\[
\frac{d \wt{\mathrm{v}}}{dv} = \frac{1}{\m-1} v^{-\frac{\m-2}{\m-1}} \left\{ 1- \frac{\m-2}{2} \zeta (1-\theta) t^{-\frac{1}{2}} v^{-\frac{\m}{2(\m-1)}} \right\},
\]
\eqref{eq:ftuinf} yields
\begin{align*}
& \left\| \wh{u}(t,\xi_v) - \sqrt{\m-1} e^{-\frac{1}{\m} it\xi_v^{\m}} \gamma (t,v) \right\|_{L^2_v (\Ot)} \\
& \lesssim \int_0^1 \left\| \lambda \int_{\R} \left| \wh{\J u} (t, \xi_v + \lambda \zeta (1-\theta)) \right| \left| \zeta \chi_1 (\zeta, \lambda^{-1} \xi_v) \right| d\zeta \right\|_{L^2_v (\Ot)}  d\theta\\
& \quad + \left\| (t^{\frac{\m-1}{\m}} v)^{-\frac{\m}{2(\m-1)}} \wh{u} (t,\xi_v) \right\|_{L^2_v(\Ot)} + t^{-\frac{\m-1}{2\m}} \cdot t^{-\frac{1}{2\m}} \| u(t) \|_{\wt{X}} \\
& \lesssim t^{-\frac{1}{2}} \left\| (\J u) (t,\wt{\mathrm{v}}) \right\|_{L^2_{\wt{\mathrm{v}}}} 
+ t^{-\frac{\m-1}{2\m}} \cdot t^{-\frac{1}{2\m}} \| u(t) \|_{\wt{X}} \\
& \lesssim t^{-\frac{\m-1}{2\m}} \cdot t^{-\frac{1}{2\m}} \| u(t) \|_{\wt{X}},
\end{align*}
which concludes the $L^2$-estimate in \eqref{ugfs}
\end{proof}

\section{Proof of the main theorem} \label{S:proof}

We show the following estimate for $\dot{\gamma}$.

\begin{prop} \label{prop:gamma_decay}
Let $u$ be a solution to \eqref{hKdV} that satisfies \eqref{est:u_infty0}.
Then, for $t \ge 1$, we have
\[
\left\| t (t^{\frac{\m-1}{\m}} v)^{\frac{\m-2}{4(\m-1)}} \dot{\gamma} (t) \right\|_{L_v^{\infty} (\Ot)} +\left\| t^{\frac{3\m-1}{2\m}} \dot{\gamma} (t) \right\|_{L_v^2 (\Ot)}
\lesssim \eps,
\]
where the implicit constant is independent of $D$, $T$, and $\eps$.
\end{prop}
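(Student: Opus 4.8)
The plan is to differentiate $\gamma$ along the flow and exploit that, by \eqref{eq:LPsi}, $\Psi_v$ solves the linear equation up to small errors. Using \eqref{hKdV} together with the fact that $\frac1\m|\dx|^{\m-1}\dx$ is real and skew‑adjoint on $L^2(\R)$, one gets
\[
\dot\gamma(t,v) = \int_\R u\,\ol{\Lc\Psi_v}\,dx + \int_\R \dx(u^\m)\,\ol{\Psi_v}\,dx .
\]
Inserting \eqref{eq:LPsi} into the first integral splits it into the \emph{main term} $\frac1{t\lambda}\int u\,\ol{e^{i\phi}\dx\wt{\chi}}\,dx$, a negative‑frequency correction coming from $\frac1\m(|\dx|^{\m-1}\dx-i(-i\dx)^\m)P^-\Psi_v$, and a genuine remainder of size $O\big(t^{-1}(t^{\frac{\m-1}\m}v)^{-\frac\m{\m-1}}\xc(\lambda(x-vt))\big)$.

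The remainder and the negative‑frequency correction I would treat by crude $L^1_x$–$L^\infty_x$ estimates: $\xc(\lambda(\cdot-vt))$ has $L^1_x$‑norm $\sim\lambda^{-1}$, the function $|\dx|^a(P^-\Psi_v)$ has a very small $L^1_x$‑norm by \eqref{Psi_L1}, and $u$ obeys the pointwise bounds of Proposition \ref{prop:est|u|} (applied to $\uhp$ and $\ue$; the $u^-$ part only pairs with the negative‑frequency, hence tiny, portion of the test function, as in \eqref{estu-}). This gives contributions bounded by $\eps\,t^{-1}(t^{\frac{\m-1}\m}v)^{-c}$ with $c$ as large as we like, so both the weighted $L^\infty_v$ and $L^2_v(\Ot)$ norms are $\lesssim\eps$. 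The same crude bound, now using the bootstrap \eqref{est:u_infty0}, handles the nonlinear term: $\int\dx(u^\m)\ol{\Psi_v}\,dx=-\int u^\m\,\ol{\dx\Psi_v}\,dx$ is $\lesssim\|u^\m\|_{L^\infty(\supp\Psi_v)}(\xi_v+\lambda)\lambda^{-1}\lesssim (D\eps)^\m t^{-1}(t^{\frac{\m-1}\m}v)^{\frac{\m(3-\m)}{2(\m-1)}}$, and since $\m\ge4$ the exponent $\frac{\m(3-\m)}{2(\m-1)}$ is negative enough to be square‑integrable over the unbounded set $\Ot$, while $D\ll\eps^{-1/2}$ gives $(D\eps)^\m\le\eps^{\m/2}\le\eps$ with a constant independent of $D$.

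The core is the main term. After integrating by parts, and using $\supp\wt{\chi}\subset\R_+$ together with the identity $\dx(\uhp e^{-i\phi})=-it^{-\frac1{\m-1}}e^{-i\phi}\J_+\uhp$ (valid on $\R_+$, where $\dx\phi=t^{-\frac1{\m-1}}|x|^{\frac1{\m-1}}$) and the splitting $u=2\Re\uhp+\ue$, I would reduce it to: (i) a term $\propto\frac{t^{-1/(\m-1)}}{t\lambda}\int e^{-i\phi}(\J_+\uhp)\,\ol{\wt{\chi}}\,dx$, controlled by the weighted bound $\big\||x|^{\frac{\m-2}{\m-1}}\J_+\uhp\big\|_{L^2}\lesssim\|u(t)\|_{\wt X}$ from \eqref{uhpL2}; (ii) a term carrying $\ol{\uhp}$, i.e.\ $e^{-2i\phi}$ times a slowly varying profile paired with the non‑oscillatory $\wt{\chi}$, which is negligible by non‑stationary phase (each integration by parts gains $(t^{\frac{\m-1}\m}v)^{-\frac\m{2(\m-1)}}$); and (iii) an $\ue$‑term, bounded via \eqref{est:uepp} and \eqref{uepL2}. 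For the $L^\infty_v$ estimate one puts $\J_+\uhp$ into $L^\infty$ on $\supp\wt{\chi}$ by the Gagliardo–Nirenberg/support argument of Proposition \ref{prop:est|u|}; for the $L^2_v$ estimate one invokes \eqref{eq:L^2v} (with $\chi$ replaced by the Schwartz bumps $z\chi(z)$, $z^2\chi'(z)$, $\dots$ coming from $\dx\wt{\chi}$), first pulling the $v$‑dependent prefactors — powers of $\lambda=t^{-1/2}v^{-\frac{\m-2}{2(\m-1)}}$ — out of the $x$‑integral so that \eqref{eq:L^2v} applies to a $v$‑independent function. The decisive point is that these powers of $\lambda$ combine with the weight $|x|^{\frac{\m-2}{\m-1}}\sim(vt)^{\frac{\m-2}{\m-1}}$ on $\supp\wt{\chi}$ so that all powers of $v$ cancel, leaving a clean power of $t$ times $\big\||x|^{\frac{\m-2}{\m-1}}\J_+\uhp\big\|_{L^2}$; accounting for the exponents and using $\|u(t)\|_{\wt X}\lesssim\eps\,t^{\frac1{2\m}}$ (Lemma \ref{lem:Xtilde}), each piece comes out $\lesssim\eps\,t^{-\frac1{2\m}}\cdot t^{\frac1{2\m}}=\eps$ with a constant independent of $D$.

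I expect the main obstacle to be exactly this $L^2_v(\Ot)$ estimate of the main term (and of the $\ue$‑contributions through \eqref{est:ueperr2}): its pointwise bound carries a $v$‑weight $(t^{\frac{\m-1}\m}v)^{-\frac{\m-2}{4(\m-1)}}$ with exponent $<\frac12$, hence is \emph{not} square‑integrable over the unbounded set $\Ot$, so one is forced to run the full \eqref{eq:L^2v} argument and to check that the $v$‑powers cancel — a cancellation that is precisely what makes $\m$ the critical exponent.
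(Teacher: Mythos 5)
Your overall scheme is the same as the paper's: the identity $\dot\gamma=\int_\R u\,\ol{\Lc\Psi_v}\,dx-\int_\R u^{\m}\,\ol{\dx\Psi_v}\,dx$, insertion of \eqref{eq:LPsi}, the crude bound for the nonlinear term (your exponent $\frac{\m(3-\m)}{2(\m-1)}$ matches the paper's, and square integrability over $\Ot$ indeed needs only $\m\ge 4$), the $P^-\Psi_v$ and $\xc$ remainders, the frequency/support mismatch for the $\ol{\uhp}$ and $\ue$ contributions, and, for the $L^2_v$ bound, the reduction of the main term to $\frac{t^{-1/(\m-1)}}{t\lambda}\int e^{-i\phi}(\J_+\uhp)\,\ol{\wt{\chi}}\,dx$ estimated through \eqref{eq:L^2v} and \eqref{uhpL2} and closed with Lemma \ref{lem:Xtilde}. (One minor overstatement: the $\xc$-remainder does not give ``$c$ as large as we like'' but only a fixed power $\approx\frac{2\m-1}{2(\m-1)}$; since this exceeds $\frac12$ it is still square integrable, so the conclusion is unaffected.)

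There is, however, a genuine gap in your $L^\infty_v$ treatment of the core term. You propose to bound it by $\|\J_+\uhp\|_{L^\infty(\supp\wt{\chi})}\,\|\wt{\chi}\|_{L^1}$, with the $L^\infty$ bound obtained ``by the Gagliardo--Nirenberg/support argument of Proposition \ref{prop:est|u|}''. That argument for $\uhp$ rests on $\dx(e^{-i\phi}\uhp)=-it^{-\frac{1}{\m-1}}e^{-i\phi}\J_+\uhp$; running it for $\J_+\uhp$ would require control of $\J_+^2\uhp$, which the energy estimates and Lemma \ref{lem:he_freq_est} never provide. The only bound actually available is plain Gagliardo--Nirenberg at frequency $\xi_v$, namely $\|\J_+\uhp_{N_v}\|_{L^\infty}\lesssim \xi_v^{1/2}\|\J_+\uhp_{N_v}\|_{L^2}$, and since $\|\wt{\chi}\|_{L^1}\sim\lambda^{-1}$ while $\|\wt{\chi}\|_{L^2}\sim\lambda^{-1/2}$, your route loses the factor $(\lambda^{-1}\xi_v)^{1/2}=(t^{\frac{\m-1}{\m}}v)^{\frac{\m}{4(\m-1)}}$ relative to the Cauchy--Schwarz pairing. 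This loss is fatal: the Cauchy--Schwarz route gives exactly $\eps\,t^{-1}(t^{\frac{\m-1}{\m}}v)^{-\frac{\m-2}{4(\m-1)}}$, i.e. the weight $\frac{\m-2}{4(\m-1)}$ in the proposition is saturated with zero slack in $v$, so with your bound the weighted $L^\infty_v$ quantity grows like $(t^{\frac{\m-1}{\m}}v)^{\frac{\m}{4(\m-1)}}$, and the step (and the subsequent time integration giving $\sup_v|\gamma|\lesssim\eps$) fails. The fix is simply to estimate the main term in $L^\infty_v$ the same way you already do in $L^2_v$: Cauchy--Schwarz in $x$, transferring $|x|^{\frac{\m-2}{\m-1}}\sim(vt)^{\frac{\m-2}{\m-1}}$ onto $\J_+\uhp$ and using $\|\wt{\chi}\|_{L^2}\lesssim\lambda^{-1/2}$, which is the paper's chain
\begin{equation*}
t^{-\frac{\m^2-\m+1}{\m(\m-1)}}(t^{\frac{\m-1}{\m}}v)^{\frac{\m-2}{2(\m-1)}}\int_\R\bigl|\J_+\uhp\,\wt{\chi}\bigr|dx
\lesssim t^{-\frac{\m+1}{\m}}(t^{\frac{\m-1}{\m}}v)^{-\frac{\m-2}{2(\m-1)}}\bigl\||x|^{\frac{\m-2}{\m-1}}\J_+\uhp\bigr\|_{L^2}\|\wt{\chi}\|_{L^2}
\lesssim \eps\, t^{-1}(t^{\frac{\m-1}{\m}}v)^{-\frac{\m-2}{4(\m-1)}}.
\end{equation*}
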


\begin{proof}
A direct calculation yields
\begin{align*}
\dot{\gamma}(t,v)
& = \int_{\R} \left( \Lc u \cdot \ol{\Psi_v} + u \ol{\Lc \Psi_v} \right) (t,x) dx \\
& = - \int_{\R} \left( u^{\m} \dx \ol{\Psi_v} \right) (t,x) dx + \int_{\R} \left( u \ol{\Lc \Psi_v} \right)  (t,x) dx.
\end{align*}
The bootstrap assumption \eqref{est:u_infty0} yields
\[
\left| \int_{\R} \left( u^{\m} \dx \ol{\Psi_v} \right) (t,x) dx \right|
\lesssim t^{-1} (t^{\frac{\m-1}{\m}} v)^{-\frac{\m (\m-3)}{2(\m-1)}} (D\eps)^{\m}
\le t^{-1} (t^{\frac{\m-1}{\m}} v)^{-\frac{\m (\m-3)}{2(\m-1)}} \eps.
\]
Moreover, from \eqref{eq:LPsi}, \eqref{Psi_L1}, Lemma \ref{LPsi1}, and Proposition \ref{prop:est|u|}, we have
\begin{align*}
& \left| \int_{\R} \left( u \ol{\Lc \Psi_v} \right) (t,x) dx \right| \\
& \lesssim \frac{1}{t \lambda} \left| \int_{\R} \left( e^{-i\phi} \uhp \ol{\dx \wt{\chi}} \right) (t,x) dx \right| + \frac{1}{t \lambda} \left| \int_{\R} \left( e^{-i\phi} \left( \ol{\uhp} + \ue \right) \ol{\dx \wt{\chi}} \right) (t,x) dx \right| \\
& \quad + \int_{\R} \left| (u \dx^{\m} P^- \Psi_v ) (t,x) \right| dx
+ t^{-1} (t^{\frac{\m-1}{\m}} v)^{-\frac{\m}{\m-1}} \int_{\R} |u(t,x) \xc (\lambda (x-vt))| dx \\
& \lesssim t^{-\frac{\m^2-\m+1}{\m (\m-1)}} (t^{\frac{\m-1}{\m}} v)^{\frac{\m-2}{2(\m-1)}} \int_{\R} \left| \J_+ \uhp (t,x) \wt{\chi} (t,x) \right| dx \\
& \quad + t^{-1} (t^{\frac{\m-1}{\m}} v)^{-\frac{2\m-1}{2(\m-1)}} \cdot t^{-\frac{1}{2\m}} \| u (t) \|_{\wt{X}}.
\end{align*}
Here, from \eqref{uhpL2}, we obtain
\begin{align*}
& t^{-\frac{\m^2-\m+1}{\m (\m-1)}} (t^{\frac{\m-1}{\m}} v)^{\frac{\m-2}{2(\m-1)}} \int_{\R} \left| \J_+ \uhp (t,x) \wt{\chi} (t,x) \right| dx \\
& \lesssim t^{-\frac{\m+1}{\m}} (t^{\frac{\m-1}{\m}} v)^{-\frac{\m-2}{2(\m-1)}} \left\| |x|^{\frac{\m-2}{\m-1}} \J_+ \uhp (t) \right\|_{L^2} \| \wt{\chi} (t) \|_{L^2} \\
& \lesssim t^{-1} (t^{\frac{\m-1}{\m}} v)^{-\frac{\m-2}{4(\m-1)}} \cdot t^{-\frac{1}{2\m}} \| u(t) \|_{\wt{X}}.
\end{align*}
In addition, we use \eqref{uhpL2} and \eqref{eq:L^2v} to obtain
\begin{align*}
& \left\| t^{-\frac{\m^2-\m+1}{\m (\m-1)}} (t^{\frac{\m-1}{\m}} v)^{\frac{\m-2}{2(\m-1)}} \int_{\R} \left| \J_+ \uhp (t,x) \wt{\chi} (t,x) \right| dx \right\|_{L^2_v (\Ot)} \\
& \lesssim t^{-\frac{3}{2}} \left\| |x|^{\frac{\m-2}{\m-1}} \J_+ \uhp \right\|_{L^2} \\
& \lesssim t^{-\frac{3\m-1}{2\m}} \cdot t^{-\frac{1}{2\m}} \| u(t) \|_{\wt{X}}. \qedhere
\end{align*}
\end{proof}

First, we prove global existence of the solution to \eqref{hKdV}.
From Proposition \ref{prop:WP1} and Lemma \ref{lem:energy}, this is equivalent to showing \eqref{est:u_infty}, that is to say, to closing the bootstrap estimate \eqref{est:u_infty0}.
When $t^{-\frac{1}{\m}} |x| \lesssim 1$, Lemma \ref{lem:Xtilde} and Proposition \ref{prop:est|u|} yield
\[
\left\| \lr{t^{-\frac{1}{\m}}x}^{-\frac{k}{\m-1}+\frac{\m-2}{2(\m-1)}} \dx^k u(t) \right\|_{L^{\infty}(t^{-\frac{1}{\m}} |x| \lesssim 1)} \lesssim \eps t^{-\frac{k+1}{\m}}.
\]
When $t^{-\frac{1}{\m}} |x| \gtrsim 1$, owing to \eqref{ugsp1}, we only need to show that
\[
\| \gamma (t) \|_{L^{\infty}_v(\Ot)} \lesssim \eps,
\]
where the implicit constant is independent of $D$, $T$, and $\eps$.

For $v \ge C_{\ast}$, where $C_{\ast}$ is defined by \eqref{constast}, the Gagliardo-Nirenberg inequality, Proposition \ref{prop:WP1} and Lemma \ref{lem:freq_psi} lead to
\[
| \gamma (1,v) |
\lesssim \| \wh{u} (1) \|_{L^{\infty}}
= \left\| e^{\frac{1}{\m}i |\xi|^{\m-1}\xi} \wh{u} (1) \right\|_{L^{\infty}}
\lesssim \| u(1) \|_{L^2}^{\frac{1}{2}} \| \J u(1) \|_{L^2}^{\frac{1}{2}}
\lesssim \eps.
\]
The fundamental theorem of calculus and Proposition \ref{prop:gamma_decay} yield
\[
\gamma (t,v) = \gamma (1,v) + O \left( \eps (t^{\frac{\m-1}{\m}} v)^{-\frac{\m-2}{4(\m-1)}} \right),
\]
which implies
\[
|\gamma (t,v) | \lesssim \eps
\]
for $t \in [1,T]$.

When $0<v< C_{\ast}$, set $t_0 := (C_{\ast} v^{-1})^{\frac{\m}{\m-1}}>1$.
Note that $v \in \Ot$ for $t \ge t_0$.
Then, Bernstein's inequality, \eqref{Psi_L1}, Proposition \ref{prop:est|u|}, and Lemma \ref{lem:Xtilde} yield
\[
|\gamma (t_0,v) |
\lesssim t_0^{\frac{1}{2\m}} \sum_{\substack{N \in 2^{\mathbb{Z}} \\ N \sim t_0^{-\frac{1}{\m}}}} \| u_N (t_0) \|_{L^2} + \eps
\lesssim \eps.
\]
The fundamental theorem of calculus and Proposition \ref{prop:gamma_decay} lead to
\[
\gamma (t,v) = \gamma (t_0,v) + O \left( \eps \right),
\]
which implies
\[
|\gamma (t,v) | \lesssim \eps
\]
for $t \in [t_0,T]$.
Accordingly, we conclude that \eqref{est:u_infty} holds for any $t \in [1,T]$.

Second, we show the existence of a self-similar solution.
We use the self-similar change of variables \eqref{selfsimilar}.
Let $\rho >0$ be a constant specified later.
We set $\Orz := \{ y \in \R \colon |y| \le C_{\ast} t^{(\m-1)\rho} \}$ and $\con := 4 (\kk C_{\ast})^{\frac{1}{\m-1}}$.
For $k=0,1, \dots, \m-2$, estimates \eqref{eq:selfbound} and \eqref{uepNl2} and Lemmas \ref{lem:energy} and \ref{lem:Xtilde} imply that
\begin{align*}
& \left\| \dt \left( P_{\le \con t^{\rho}} \dy^k U \right) \right\|_{L^{\infty}_y (\Orz)} \\
& \lesssim t^{(k+\frac{1}{2})\rho} \left\| P_{\le \con t^{\rho}} \dt U \right\|_{L^2_y} + t^{-1} \| P_{\con t^{\rho}} \dy^k U \|_{L^{\infty}_y (\Orz)} \\
& \lesssim t^{(k+\frac{3}{2})\rho-\frac{1}{2\m}-1} \| \Lambda u \|_{L^2_x} + t^{(k+\frac{1}{2})\rho+\frac{1}{2\m}-1} \sum _{\substack{N \in 2^{\mathbb{Z}} \\ N \sim \con t^{\rho-\frac{1}{\m}}}} \| \uep_N \|_{L^2} \\
& \lesssim \eps t^{-1+(k+\frac{3}{2})\rho-\min \left( \frac{1}{2\m}-\eps, \m \rho \right)}.
\end{align*}
Furthermore, \eqref{est:ueperr}, \eqref{uepNl2}, and Lemma \ref{lem:Xtilde} yield
\begin{align*}
&
\begin{aligned}
\left\| P_{> \con t^{\rho}} \dy^k U \right\|_{L^{\infty}_y (\Orz)}
& \lesssim t^{\frac{k+1}{\m}} \sum _{\substack{N \in 2^{\mathbb{Z}} \\ N > \con t^{\rho-\frac{1}{\m}}}} N^{k+\frac{1}{2}} \| \uep_N \|_{L^2} \\
& \quad + t^{\frac{k+1}{\m}} \sum _{\substack{N \in 2^{\mathbb{Z}} \\ N > \con t^{\rho-\frac{1}{\m}}}} \left\| \left( 1-\PN \right) |\dx|^{k+\frac{1}{2}} \uep_N \right\|_{L^2} \\
& \lesssim t^{(k-\m+\frac{3}{2}) \rho} \eps,
\end{aligned}
\\
&
\begin{aligned}
\| P_{> \con t^{\rho}} \dx^k U \|_{L^2_y (\Orz)}
& \lesssim t^{\frac{k}{\m}+\frac{1}{2\m}} \sum _{\substack{N \in 2^{\mathbb{Z}} \\ N > \con t^{\rho-\frac{1}{\m}}}} N^k \| \uep_N \|_{L^2} \\
& \quad + t^{\frac{k}{\m}+\frac{1}{2\m}} \sum _{\substack{N \in 2^{\mathbb{Z}} \\ N > \con t^{\rho-\frac{1}{\m}}}} \left\| \left( 1-\PN \right) |\dx|^k \uep_N \right\|_{L^2} \\
& \lesssim t^{(k-\m+1)\rho} \eps.
\end{aligned}
\end{align*}
By setting $\rho := \frac{1}{\m} (\frac{1}{2\m}-\eps)$, there exists $Q =Q(y) \in L^{\infty}_y (\R)$ such that
\begin{equation}
\| \dy^k U(t) - \dy^k Q \|_{L^{\infty}_y (\Orz)} \lesssim \eps t^{(k-\m+\frac{3}{2})\rho}, \label{estQ}
\quad
\| \dx^k U(t) - \dx^k Q \|_{L^2_y (\Orz)} \lesssim \eps t^{(k-\m+1)\rho}.
\end{equation}
By \eqref{est:u_infty} and the first estimate in \eqref{estQ}, we see that
\begin{equation} \label{selfest}
\begin{aligned}
\left\| \lr{\cdot}^{\frac{\m-2}{2(\m-1)}} Q \right\|_{L^{\infty} (\R)}
&\le \lim_{t \rightarrow \infty} \left( t^{\frac{\m-2}{2}\rho} \left\| Q-U(t) \right\|_{L^{\infty} (\Orz)} + \left\| \lr{\cdot}^{\frac{\m-2}{2(\m-1)}} U(t) \right\|_{L^{\infty} (\R)} \right) \\
&\lesssim \eps.
\end{aligned}
\end{equation}
Because Lemma \ref{lem:Xtilde} implies that
\begin{equation} \label{limQ}
\left\| |\dy|^{\m-1} U - y U - \m U^{\m} \right\|_{L^2_y}
= \left\| (\Lambda u) (t,t^{\frac{1}{\m}}y) \right\|_{L^2_y}
\lesssim t^{-\frac{1}{2\m}+\eps},
\end{equation}
by taking the limit as $t \rightarrow \infty$, we have that $Q$ is a solution to \eqref{eq:self}.
Moreover, \eqref{estQ} and the mass conservation law yield
\[
\int_{\R} Q(y) dy = \lim_{t \rightarrow \infty} \int_{\R} U(t,y) dy = \int_{\R} u_0(x) dx.
\]
Therefore, $\self (t,x) := t^{-\frac{1}{\m}} Q(t^{-\frac{1}{\m}} x)$ is a solution to \eqref{hKdV} with $\self (0) = \int_{\R} u_0(x) dx \delta_0$, where  $\delta_0$ denotes the Dirac delta measure concentrated at the origin.

Finally, we prove the asymptotic behavior of the global solution.
The estimates in the self-similar region $\Xz$ follow from \eqref{estQ}.
Moreover, the estimates in the decaying region $\Xn$ are consequences of Lemma \ref{lem:Xtilde}, \eqref{uepL2}, and \eqref{est:uepp}.
Hence, we only need to show the estimates in the oscillatory region $\Xp$.
Proposition \ref{prop:gamma_decay} yields
\begin{equation} \label{gammaW}
\gamma (t,v) = \gamma (1,v) + \wt{R}(t,v),
\end{equation}
where
\[
\left\| (t^{\frac{\m-1}{\m}} v)^{\frac{\m-2}{4(\m-1)}} \wt{R} (t,v) \right\|_{L^{\infty} (\Ot)} + \left\| t^{\frac{\m-1}{2\m}} \wt{R} (t,v) \right\|_{L^2 (\Ot)} 
\lesssim \eps.
\]
Here, we set
\[
W(\xi) := \sqrt{\m-1} \gamma (1, \xi^{\m-1}),
\]
and extend $W$ to $\R$ by defining
\[
W(-\xi) = \ol{W(\xi)}, \quad
W(0) = \int_{\R} u_0(x) dx.
\]
Then, from \eqref{est:u_infty} and \eqref{eq:L^2v}, we see that
\[
\| W \|_{L^{\infty} \cap L^2}
\lesssim \| u(1) \|_{L^{\infty} \cap L^2}
\lesssim \eps.
\]
Proposition \ref{prop:approx} and \eqref{gammaW} show the estimates in $\Xp$.

\appendix

\section{Well-posedness} \label{wp}

In this appendix, we show the local-in-time well-posedness of the Cauchy problem of \eqref{hKdVp} as well as \eqref{hKdV}.
Here, we assume $\p \ge \m \ge 3$.

To show well-posedness for \eqref{hKdV}, we can apply the Fourier restriction norm method.
In fact, Gr\"{u}nrock \cite{Gru10} proved well-posedness for \eqref{hKdV} in $H^s(\R)$ for odd values of $\m \ge 5$ and $s>-\frac{1}{2}$.
On the other hand, because $F$ may not be a polynomial with respect to $u$, some regularity is needed to be well-posed for \eqref{hKdVp}.

We need to introduce some notation.
Let $1 \le p,q \le \infty$ and $T>0$.
Define
\begin{align*}
& \| f \|_{L_x^p L_T^q} := \left( \int_{-\infty}^{\infty} \left( \int_{-T}^T |f(t,x)|^q dt \right)^{\frac{p}{q}} dx \right)^{\frac{1}{p}},
\\
& \| f \|_{L_T^q L_x^p} := \left( \int_{-T}^T \left( \int_{-\infty}^{\infty} |f(t,x)|^p dt \right)^{\frac{q}{p}} dx \right)^{\frac{1}{q}},
\end{align*}
with $T=t$ to indicate the case when $T=\infty$.

The maximal function estimate and the local smoothing estimate are the main tools in the proof (see Theorems 2.5 and 4.1 in \cite{KPV91} respectively).

\begin{thm} \label{thm:linest1}
\[
\| \lp (t) u_0 \|_{L_x^4 L_t^{\infty}} \lesssim \left\| |\dx|^{\frac{1}{4}} u_0 \right\|_{L^2}, \quad
\left\| |\dx|^{\frac{\m-1}{2}} \lp (t) u_0 \right\|_{L_x^{\infty} L_t^2} \lesssim \| u_0 \|_{L^2}.
\]
\end{thm}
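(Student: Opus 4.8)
The plan is to obtain both inequalities from classical oscillatory-integral estimates for the Airy-type propagator; in the form stated they are Theorems~2.5 and~4.1 of \cite{KPV91}, so I will only indicate the argument. Throughout, write $\lp(t) u_0(x) = c\int_{\R} e^{ix\xi - it\omega(\xi)} \wh{u_0}(\xi)\, d\xi$ with the dispersion relation $\omega(\xi) := \frac{1}{\m}|\xi|^{\m-1}\xi$, so that $\omega'(\xi) = |\xi|^{\m-1} > 0$ for $\xi \neq 0$ and $\omega\colon\R\to\R$ is a strictly increasing bijection.

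\emph{Local smoothing.} I would fix $x\in\R$ and apply Plancherel in $t$. Substituting $\tau = \omega(\xi)$, with $d\tau = |\xi|^{\m-1}d\xi$, rewrites
\[
|\dx|^{\frac{\m-1}{2}}\lp(t)u_0(x) = c\int_{\R} e^{-it\tau}\Big( e^{ix\xi(\tau)}\, |\xi(\tau)|^{-\frac{\m-1}{2}}\, \wh{u_0}(\xi(\tau)) \Big)\,d\tau
\]
as (a constant times) the inverse $t$-Fourier transform of the bracketed function, whose modulus $|\xi(\tau)|^{-\frac{\m-1}{2}}|\wh{u_0}(\xi(\tau))|$ is \emph{independent of $x$}. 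Plancherel in $t$, then undoing the substitution, gives
\[
\int_{\R}\Big| |\dx|^{\frac{\m-1}{2}}\lp(t)u_0(x)\Big|^2 dt = c\int_{\R}|\xi|^{-(\m-1)}|\wh{u_0}(\xi)|^2\,|\xi|^{\m-1}\,d\xi = c\|u_0\|_{L^2}^2,
\]
i.e.\ the $\frac{\m-1}{2}$ derivatives are exactly balanced by the Jacobian $\omega'(\xi)^{-1}$; since the right-hand side does not depend on $x$, taking the supremum in $x$ yields the claim (in fact an identity up to a constant).

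\emph{Maximal function.} By a measurable selection it suffices to bound $\|\lp(t(x))u_0(x)\|_{L^4_x}$ uniformly over measurable functions $t(\cdot)$. With $f(\xi):=|\xi|^{1/4}\wh{u_0}(\xi)$ (so $\|f\|_{L^2}\sim \||\dx|^{1/4}u_0\|_{L^2}$) and $Sf(x):=\int_{\R} e^{i(x\xi - t(x)\omega(\xi))}|\xi|^{-1/4}f(\xi)\,d\xi$, the estimate becomes $\|S\|_{L^2\to L^4}\lesssim1$, equivalently, by a $TT^{*}$ (duality) argument, $\|SS^{*}\|_{L^{4/3}\to L^4}\lesssim1$. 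The kernel of $SS^{*}$ is
\[
K(x,y)=\int_{\R} e^{i\Phi(\xi)}|\xi|^{-1/2}\,d\xi,\qquad \Phi(\xi):=a\xi-b\,\omega(\xi),\quad a:=x-y,\ b:=t(x)-t(y),
\]
and everything reduces to proving $|K(x,y)|\lesssim|x-y|^{-1/2}$ with a constant independent of $t(\cdot)$, after which the one-dimensional Hardy--Littlewood--Sobolev inequality ($L^{4/3}\to L^4$ with kernel $|x-y|^{-1/2}$) finishes the proof. For the kernel bound I would decompose dyadically in $|\xi|\sim\mu=2^j$: since $\Phi'(\xi)=a-b|\xi|^{\m-1}$, a stationary point occurs only when $ab>0$, and only at the single scale $\mu_\ast\sim(|a|/|b|)^{1/(\m-1)}$, where $|\Phi''|\sim|b|\mu_\ast^{\m-2}$; van der Corput's lemma there contributes $\lesssim|b|^{-1/2}\mu_\ast^{-(\m-1)/2}=|a|^{-1/2}$. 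On every other block $|\Phi'|\gtrsim\max(|a|,|b|\mu^{\m-1})$, so repeated integration by parts (when $\mu\max(|a|,|b|\mu^{\m-1})\gtrsim1$) together with the trivial bound $\int_{|\xi|\sim\mu}|\xi|^{-1/2}d\xi\sim\mu^{1/2}$ (otherwise) makes the dyadic sum geometric and dominated by the endpoint scales; a short case split according to the size of $|a|$ relative to $|b|^{1/\m}$ then shows every term is $\lesssim|a|^{-1/2}$.

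\emph{Main obstacle.} The hard part is this uniform kernel estimate for $K$: one must extract the sharp decay $|x-y|^{-1/2}$ from the oscillatory integral while the auxiliary parameter $b=t(x)-t(y)$ ranges over all of $\R$, matching the van der Corput gain at the stationary frequency against the weight $|\xi|^{-1/2}$ and controlling the dyadic summation at the extreme frequencies. The local-smoothing half is by contrast soft, following immediately from Plancherel in time combined with the change of variables to the dispersion variable.
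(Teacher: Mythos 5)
Your proposal is correct in outline and follows the standard argument: the paper itself gives no proof of this statement, simply citing Theorems 2.5 and 4.1 of \cite{KPV91}, and your sketch reproduces exactly the method of that reference (Plancherel in $t$ after the change of variables $\tau=\omega(\xi)$ for the sharp local smoothing identity; linearization, $TT^{*}$, the uniform kernel bound $|K(x,y)|\lesssim|x-y|^{-1/2}$ via van der Corput and dyadic decomposition, and the $L^{4/3}\to L^4$ fractional integration for the maximal estimate). Since both you and the paper ultimately rest on the Kenig--Ponce--Vega argument, there is nothing further to reconcile.
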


Stein's interpolation \cite{Ste56} and Theorem \ref{thm:linest1} yield the following.
\begin{lem} \label{lem:mfls}
For any value of $\al$ where $-\frac{1}{4} \le \al \le \frac{\m-1}{2}$, we have
\[
\| |\dx|^{\al} \lp (t) u_0 \|_{L_x^{\frac{2(2\m-1)}{\m-1-2\al}} L_t^{\frac{2(2\m-1)}{1+4\al}}} \lesssim \| u_0 \|_{L^2}.
\]
\end{lem}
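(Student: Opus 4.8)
The plan is to deduce Lemma~\ref{lem:mfls} from the two endpoint estimates of Theorem~\ref{thm:linest1} by Stein's analytic interpolation theorem, applied to an analytic family of Fourier multipliers indexed by the order of differentiation. First I would check that the two estimates in Theorem~\ref{thm:linest1} are exactly the endpoints $\al=-\frac14$ and $\al=\frac{\m-1}{2}$ of the claimed family. For $\al=\frac{\m-1}{2}$ one has $\frac{2(2\m-1)}{\m-1-2\al}=\infty$ and $\frac{2(2\m-1)}{1+4\al}=2$, so the assertion is precisely $\||\dx|^{\frac{\m-1}{2}}\lp(t)u_0\|_{L_x^{\infty}L_t^{2}}\lesssim\|u_0\|_{L^2}$, the second estimate. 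For $\al=-\frac14$ the exponents are $4$ and $\infty$; since $\lp(t)$ and $|\dx|^{-1/4}$ commute, replacing $u_0$ by $|\dx|^{-1/4}u_0$ in the first estimate of Theorem~\ref{thm:linest1} gives $\||\dx|^{-1/4}\lp(t)u_0\|_{L_x^{4}L_t^{\infty}}\lesssim\||\dx|^{1/4}|\dx|^{-1/4}u_0\|_{L^2}=\|u_0\|_{L^2}$, which is the $\al=-\frac14$ endpoint.

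Next I would introduce, for $z$ in the closed strip $\{0\le\Re z\le1\}$, the analytic family of operators
\[
T_z u_0 := |\dx|^{a(z)}\lp(t)u_0,\qquad a(z):=-\tfrac14+\tfrac{2\m-1}{4}\,z,
\]
acting from a dense subspace of $L^2_x(\R)$ (say, Schwartz functions with $\wh{u_0}$ supported away from the origin) into space–time mixed-norm spaces; here $|\dx|^{a(z)}$ is the Fourier multiplier with symbol $|\xi|^{a(z)}=e^{a(z)\log|\xi|}$, acting in $x$ only. On the line $\Re z=0$ we have $a(it)=-\frac14+\frac{2\m-1}{4}it$, hence $|\dx|^{a(it)}=|\dx|^{-1/4}|\dx|^{i\frac{2\m-1}{4}t}$; since $|\dx|^{is}$ is an $L^2$-isometry commuting with $\lp(t)$, the $\al=-\frac14$ endpoint yields $\|T_{it}u_0\|_{L_x^{4}L_t^{\infty}}\lesssim\|u_0\|_{L^2}$ uniformly in $t\in\R$. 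On the line $\Re z=1$ we have $a(1+it)=\frac{\m-1}{2}+\frac{2\m-1}{4}it$, and the same remark together with the second endpoint gives $\|T_{1+it}u_0\|_{L_x^{\infty}L_t^{2}}\lesssim\|u_0\|_{L^2}$ uniformly in $t$. For $u_0$ in the dense class above and $g$ Schwartz in $(t,x)$, the map $z\mapsto\langle T_z u_0,\,g\rangle$ is entire and bounded on the strip, because the symbol $|\xi|^{a(z)}$ is entire in $z$ with $\big||\xi|^{a(z)}\big|=|\xi|^{\Re a(z)}$ independent of $\Im z$ and integrable against the test function; thus the family is admissible in the sense of Stein's theorem (one may also insert a harmless factor $e^{z^2}$ for safety).

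Then I would invoke Stein's interpolation theorem for mixed-norm Lebesgue spaces: for $0<\theta<1$ the two line bounds give $\|T_\theta u_0\|_{L_x^{4/(1-\theta)}L_t^{2/\theta}}\lesssim\|u_0\|_{L^2}$, where the exponents come from $\frac1{p_\theta}=\frac{1-\theta}{4}$ and $\frac1{q_\theta}=\frac{\theta}{2}$. Finally I would translate back to the parameter $\al$: given $\al\in[-\frac14,\frac{\m-1}{2}]$ set $\theta:=\frac{4\al+1}{2\m-1}\in[0,1]$, so that $a(\theta)=\al$, $1-\theta=\frac{2(\m-1-2\al)}{2\m-1}$, whence $\frac{4}{1-\theta}=\frac{2(2\m-1)}{\m-1-2\al}$ and $\frac{2}{\theta}=\frac{2(2\m-1)}{1+4\al}$; this is exactly the claimed inequality for $\al$ in the open interval, while the endpoints are Theorem~\ref{thm:linest1}, and a density argument extends it to all $u_0\in L^2(\R)$. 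I expect the only delicate point to be the bookkeeping for the analytic family when $\Re a(z)<0$, so that $|\dx|^{a(z)}$ is a singular nonlocal operator with a low-frequency singularity; this is why I would work first with $u_0$ whose Fourier support avoids the origin before passing to the limit. Beyond that, the argument is entirely routine.
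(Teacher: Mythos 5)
Your proposal is correct and is exactly the paper's argument: the paper proves this lemma in one line by citing Stein's analytic interpolation applied to the two endpoint estimates of Theorem \ref{thm:linest1}, which is precisely the analytic family and exponent bookkeeping you carry out (your identification of the endpoints $\al=-\frac14$ and $\al=\frac{\m-1}{2}$ and the choice $\theta=\frac{4\al+1}{2\m-1}$ all check out).
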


In particular, by setting $\al=0$ or $\al=1$, it follows that
\begin{equation} \label{interpolation1}
\| \lp (t) u_0 \|_{L_x^{\frac{2(2\m-1)}{\m-1}} L_t^{2(2\m-1)}} + \| \dx \lp (t) u_0 \|_{L_x^{\frac{2(2\m-1)}{\m-3}} L_t^{\frac{2(2\m-1)}{5}}} \lesssim \| u_0 \|_{L^2}.
\end{equation}
Moreover, setting $\al = \frac{\p+\m-3}{2(\p-1)}$, we have
\begin{equation} \label{interpolation2}
\| \dx \lp (t) u_0 \|_{L_x^{p_0} L_t^{q_0}}
\lesssim \left\| |\dx|^{\frac{\p-\m+1}{2(\p-1)}} u_0 \right\|_{L^2},
\end{equation}
where $\left( p_0,q_0 \right) := \left( \frac{2(2\m-1)(\p-1)}{(\m-2)(\p-2)},  \frac{2(2\m-1)(\p-1)}{3\p+2\m-7} \right)$.

Our next result is a Sobolev type of estimate (the proof is the same as that of Lemma 3.15 in \cite{KPV93}).

\begin{lem} \label{lem:nonestq}
For any $q \in \R$ with $\max \left( \frac{-\p+2\m-3}{2(2\m-1)(\p-1)}, 0\right) < \frac{1}{q} < \frac{\m-2}{2\m (\p-1)}$, we have
\[
\| g \|_{L_x^{r(q)} L_t^q} \lesssim \left\| |\dx|^{\al (q)} g \right\|_{L_x^{\frac{4q}{q-2}} L_t^q},
\]
where $\frac{1}{r(q)} :=- \frac{\m}{q}+\frac{\m-2}{2(\p-1)}$ and $\al (q) := \frac{2\m-1}{2q}-\frac{\m-2}{2(\p-1)}+\frac{1}{4}$.
\end{lem}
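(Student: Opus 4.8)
The plan is to view this as a one-dimensional Sobolev embedding carried out in the spatial variable for each fixed time, and then to interchange the order of the mixed norm by Minkowski's integral inequality; this is the same structure as the proof of \cite[Lemma 3.15]{KPV93}.

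First I would set $p := \frac{4q}{q-2}$ and record the scaling identity
\[
\frac{1}{r(q)} = -\frac{\m}{q} + \frac{\m-2}{2(\p-1)} = \Bigl( \frac14 - \frac1{2q} \Bigr) - \Bigl( \frac{2\m-1}{2q} - \frac{\m-2}{2(\p-1)} + \frac14 \Bigr) = \frac1p - \al(q),
\]
which is exactly the exponent relation required by the one-dimensional Hardy--Littlewood--Sobolev inequality. The next step is to verify that the hypotheses on $\tfrac1q$ are precisely the admissibility conditions. The upper bound $\tfrac1q < \frac{\m-2}{2\m(\p-1)}$ forces $q > 2$ (hence $1 < p < \infty$) and is equivalent to $\tfrac1{r(q)} > 0$ (hence $r(q) < \infty$), while the lower bound $\tfrac1q > \frac{-\p+2\m-3}{2(2\m-1)(\p-1)}$ is equivalent to $\al(q) > 0$, and hence to $r(q) > p$; finally $\al(q) < 1$ follows again from $\tfrac1q < \frac{\m-2}{2\m(\p-1)}$. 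Thus $0 < \al(q) < 1$ and $1 < p < r(q) < \infty$.

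With these facts in hand, I would argue as follows. By density it suffices to treat $g$ that is Schwartz in $x$ for almost every $t$. Writing $h := |\dx|^{\al(q)} g$, the operator $|\dx|^{-\al(q)}$ is, up to a fixed constant, convolution in $x$ with the Riesz kernel $|\cdot|^{\al(q)-1}$ on $\R$ (this uses $0 < \al(q) < 1$), so that
\[
|g(t,x)| \lesssim \int_{\R} |x-y|^{\al(q)-1} \, |h(t,y)| \, dy .
\]
Taking the $L^q_t$ norm of both sides and applying Minkowski's integral inequality (valid since $q \ge 1$) gives
\[
\bigl\| g(\cdot, x) \bigr\|_{L^q_t} \lesssim \int_{\R} |x-y|^{\al(q)-1} \, \bigl\| h(\cdot, y) \bigr\|_{L^q_t} \, dy ,
\]
and then the one-dimensional Hardy--Littlewood--Sobolev inequality, applied in the $x$-variable to the function $x \mapsto \| h(\cdot, x) \|_{L^q_t}$ with exponent pair $(p, r(q))$, yields
\[
\Bigl\| \, \bigl\| g(\cdot, x) \bigr\|_{L^q_t} \, \Bigr\|_{L^{r(q)}_x} \lesssim \Bigl\| \, \bigl\| h(\cdot, x) \bigr\|_{L^q_t} \, \Bigr\|_{L^p_x},
\]
which is the claimed estimate $\| g \|_{L^{r(q)}_x L^q_t} \lesssim \bigl\| |\dx|^{\al(q)} g \bigr\|_{L^{\frac{4q}{q-2}}_x L^q_t}$. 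I expect the only delicate point to be the exponent bookkeeping in the second step — checking that the prescribed range of $1/q$ places $\al(q)$ strictly inside $(0,1)$ and keeps $(p, r(q))$ in the admissible range for Hardy--Littlewood--Sobolev — and beyond that there is no genuinely hard step, since the estimate is a direct one-dimensional analogue of \cite[Lemma 3.15]{KPV93}.
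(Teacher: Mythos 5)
Your proposal is correct and follows essentially the same route as the paper: represent $g$ by fractional integration against the Riesz kernel $|x-y|^{\al(q)-1}$, apply Minkowski's inequality in $t$, and conclude with the one-dimensional Hardy--Littlewood--Sobolev inequality using the relation $\frac{1}{r(q)}=\frac{q-2}{4q}-\al(q)$. Your additional verification that the stated range of $\frac{1}{q}$ gives $0<\al(q)<1$ and admissible exponents is exactly the bookkeeping the paper leaves implicit.
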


\begin{proof}
The assumption implies that $0<\al (q)<1$.
Fix $t$ now and use the fractional integration in $x$ to obtain the representation
\[
g(t,x) = c_{q} \int_{-\infty}^{\infty} \frac{1}{|x-y|^{1-\al (q)}} \left( |\dx|^{\al (q)}g \right) (t,y) dy.
\]
By Minkowski's inequality, we have
\[
\| g(x) \|_{L_t^q} \lesssim \int_{-\infty}^{\infty} \frac{1}{|x-y|^{1-\al (q)}} \left\| \left( |\dx|^{\al (q)}g \right) (y) \right\|_{L_t^q} dy.
\]
From $\al (q) = \frac{q-2}{4q}-\frac{1}{r(q)}$, Hardy-Littlewood-Sobolev's inequality yields the desired bound.
\end{proof}

Lemma \ref{lem:mfls} with $\al = \frac{2\m-1}{2q}-\frac{1}{4}$ implies
\begin{equation} \label{interpolation3}
\left\| |\dx|^{\al (q)} \lp (t) u_0 \right\|_{L_x^{\frac{4q}{q-2}} L_t^q}
\lesssim \left\| |\dx|^{\frac{\p-\m+1}{2(\p-1)}} u_0 \right\|_{L^2}
\end{equation}
provided that $q \in [2,\infty]$.

Setting $q_1 := \frac{(2\m-1)(\p-1)}{\m-3}$, $q_2 := \frac{2(2\m-1)(\p-1)(\p-2)}{(2\m-5)\p-4\m+9}$, and $q_3 := \frac{2\m \p (\p-1)}{(\m-3)\p+1}$, we define
\begin{align*}
&
\| u \|_{Y_T} := \| u \|_{L_T^{\infty} L_x^2} + \| u \|_{L_x^{\frac{2(2\m-1)}{\m-1}} L_T^{2(2\m-1)}} + \| \dx u \|_{L_x^{\frac{2(2\m-1)}{\m-3}} L_T^{\frac{2(2\m-1)}{5}}},
\\
&
\| u \|_{\wt{Y}_T} := \| \dx u \|_{L_x^{p_0} L_T^{q_0}} + \sum_{j=1}^3 \left\| |\dx|^{\al (q_j)} u \right\|_{L_x^{\frac{4q_j}{q_j-2}} L_T^{q_j}}, \\
&
\| u \|_{Z_T^s} := \| \lr{\dx}^s u \|_{Y_T} + \| \Lambda u \|_{Y_T} + \| u \|_{\wt{Y}_T}, \\
&
Z_T^s := \{ u \in C([-T,T]; H^s(\R)) \colon \| u \|_{Z_T^s} < \infty \}
\end{align*}
for $\p \ge \m \ge 3$, $s \in \R$, and $T>0$.

The linear estimates \eqref{interpolation1}, \eqref{interpolation2}, and \eqref{interpolation3} lead to
\begin{equation} \label{wplin}
\| \lp (t) u_0 \|_{Y_T} \lesssim \| u_0 \|_{L^2}, \quad
\| \lp (t) u_0 \|_{\wt{Y}_T} \lesssim \| u_0 \|_{H^{\frac{\p-\m+1}{2(\p-1)}}}.
\end{equation}

We note that if $u$ is a solution to \eqref{hKdVp}, then
\begin{equation} \label{eq:lambda2}
\Lc \Lambda u
= \left( \m u^{\m-1} + F'(u) \right) \dx \Lambda u + \m F(u) - F'(u) u.
\end{equation}

\begin{prop} \label{prop:WPp}
Let $\p \ge \m \ge 3$ and $\frac{\p-\m+1}{2(\p-1)} \le s < 1$.
If $u_0 \in \wS^s (\R)$, then there exists a value $T>0$ that depends on $\| u_0 \|_{\wS^{\frac{\p-\m+1}{2(\p-1)}}}$ and a unique solution $u \in Z_T^s$ to \eqref{hKdVp} satisfying
\[
\sup _{t \in [-T,T]} \left( \| u(t) \|_{H^s} + \| \Lambda u(t) \|_{L^2} \right) \lesssim \| u_0 \|_{\wS^s}.
\]
Moreover, the flow map $u_0 \in \wS^s (\R) \mapsto u \in Z_T^s$ is locally Lipschitz continuous.
\end{prop}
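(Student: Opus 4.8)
The plan is to solve \eqref{hKdVp} by a contraction mapping argument for the Duhamel formulation, carried out simultaneously for $u$ and for $\Lambda u$. Writing $\mathcal{N}(u):=u^{\m}+F(u)$, a solution should satisfy
\[
u(t)=\lp(t)u_0+\int_0^t\lp(t-t')\dx\mathcal{N}(u)(t')\,dt',
\]
and, by \eqref{eq:lambda2} together with $\Lambda u(0)=xu_0$,
\[
\Lambda u(t)=\lp(t)(xu_0)+\int_0^t\lp(t-t')\Big(\big(\m u^{\m-1}+F'(u)\big)\dx\Lambda u+\m F(u)-F'(u)u\Big)(t')\,dt'
\]
(one reads $\Lambda u$ through $\J u$ and \eqref{eq:Ju} to avoid the $\dx^{-1}$ in $\Lambda$, exactly as in Remark \ref{rmk:regularity}). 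I would set this up as a map $\Phi$ on a ball
\[
B:=\big\{\,(u,w)\ :\ \|u\|_{Z_T^s}+\|w\|_{Y_T}\le 2C\|u_0\|_{\wS^s}\,\big\},
\]
whose second component is defined by the $\Lambda$-Duhamel integral driven by $(u,w)$, recovering $w=\Lambda u$ a posteriori from uniqueness. The free-evolution terms are controlled by \eqref{wplin} (hence by \eqref{interpolation1}--\eqref{interpolation3}), so everything reduces to estimating the two Duhamel integrals in the $Z_T^s$- and $Y_T$-norms with a gain of a positive power $T^{\theta}$.

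For the $u$-equation the key inhomogeneous estimate is the dual local-smoothing bound $\big\|\int_0^t\lp(t-t')\dx f(t')\,dt'\big\|_{Y_T\cap\wt{Y}_T}\lesssim\|f\|_{L_x^1L_T^2}$ together with its $\lr{\dx}^s$-version; the remaining task is to bound $\|\mathcal{N}(u)\|_{L_x^1L_T^2}$ and $\big\|\lr{\dx}^{s}\mathcal{N}(u)\big\|_{L_x^1L_T^2}$. Here one distributes the $\m$ (resp.\ $\p$) copies of $u$ so that all but one land in the mixed-norm spaces appearing in $Y_T$ and $\wt{Y}_T$ --- this is precisely what the exponents $q_1,q_2,q_3$ and $(p_0,q_0)$, together with the Sobolev-type Lemma \ref{lem:nonestq}, were designed for --- while the last factor is absorbed using H\"{o}lder in $t$ on a finite interval, which produces the decisive $T^{\theta}$. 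For the fractional derivatives one applies the Kato--Ponce estimate (Lemma \ref{lem:KP}) to the polynomial part $u^{\m}$ and a fractional chain rule to $F(u)$, which is available because $F\in C^3$ with $|F^{(j)}(u)|\lesssim|u|^{\p-j}$ and $0\le s<1$.

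The $\Lambda$-equation is handled in the same spirit, the only new point being the term $\big(\m u^{\m-1}+F'(u)\big)\dx\Lambda u$, which carries one more derivative than we a priori control on $\Lambda u$. This is absorbed by the inhomogeneous local-smoothing estimate (Theorem \ref{thm:linest1}, Lemma \ref{lem:mfls}): one places $\dx\Lambda u$ in the smoothing norm $L_x^{\infty}L_T^2$ and $\m u^{\m-1}+F'(u)$ in $L_x^1L_T^{\infty}$ (again distributing the $u$-factors across $Y_T$, $\wt{Y}_T$ and using H\"{o}lder in $t$ for the $T^{\theta}$), while the source $\m F(u)-F'(u)u=O(|u|^{\p})$ goes into $L_x^1L_T^2$ as before. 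Choosing $T=T\big(\|u_0\|_{\wS^{(\p-\m+1)/(2(\p-1))}}\big)$ small makes $\Phi$ a contraction on $B$; the fixed point is the desired solution, and the $L_T^{\infty}$-components of $\|\cdot\|_{Z_T^s}$ give $\sup_{|t|\le T}(\|u(t)\|_{H^s}+\|\Lambda u(t)\|_{L^2})\lesssim\|u_0\|_{\wS^s}$. Uniqueness and the local Lipschitz continuity of the flow map follow from the same multilinear estimates applied to differences, using that $F(u)-F(\wt{u})$ and $F'(u)-F'(\wt{u})$ obey Lipschitz bounds with constants governed by $\|u\|_{Z_T^s}+\|\wt{u}\|_{Z_T^s}$.

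The main obstacle I expect is twofold and lies entirely in the nonlinear estimates: (i) the nonpolynomial $F$ forces the use of a fractional chain rule rather than a bare Leibniz rule, and requires careful bookkeeping of which of the $\p$ copies of $u$ is placed in which mixed-norm space so that the estimate closes at the low regularity $s=\frac{\p-\m+1}{2(\p-1)}$; and (ii) the derivative loss in $\dx\Lambda u$ in \eqref{eq:lambda2} means the argument genuinely relies on the sharp local-smoothing gain, so the structure $u^{\m-1}\dx\Lambda u$ (the derivative sitting on the high-regularity factor) must be respected throughout. Everything else is the routine fixed-point machinery.
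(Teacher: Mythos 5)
Your overall skeleton (Duhamel formulation plus a contraction in $Z_T^s$, the $\Lambda$-equation read through \eqref{eq:lambda2} with data $xu_0$, the fractional Leibniz/chain rule for the nonpolynomial $F$, exponent bookkeeping via Lemma \ref{lem:nonestq}, and a $T^{\theta}$ gain from H\"older in time) coincides with the paper's. The divergence, and the genuine gap, is in how you estimate the Duhamel integrals. You make the ``dual local-smoothing bound'' $\bigl\| \int_0^t \lp(t-t')\dx f(t')\,dt' \bigr\|_{Y_T\cap\wt{Y}_T}\lesssim \| f \|_{L_x^1L_T^2}$ (and its derivative-free analogue for the $\Lambda$-equation) the key inhomogeneous estimate, but this retarded estimate is nowhere proven, is not among the linear estimates available here (Theorem \ref{thm:linest1}, Lemma \ref{lem:mfls}, \eqref{wplin} are all homogeneous), and is not soft: transferring it to the maximal-type and interpolated components of $Y_T\cap\wt{Y}_T$ requires Christ--Kiselev-type arguments or direct oscillatory-integral work, and for $\m=3$ some of the relevant time exponents are borderline ($L_T^2$ against $L_T^{2(2\m-1)/5}=L_T^2$), where such transference fails in general. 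In the same vein, your treatment of $(\m u^{\m-1}+F'(u))\dx\Lambda u$ places $\dx\Lambda u$ in $L_x^{\infty}L_T^2$ and $u^{\m-1}+F'(u)$ in $L_x^1L_T^{\infty}$; neither of these norms is controlled by the ball you set up ($\|\Lambda u\|_{Y_T}$ only contains the interpolated smoothing norm $\|\dx\Lambda u\|_{L_x^{2(2\m-1)/(\m-3)}L_T^{2(2\m-1)/5}}$, and $Y_T\cap\wt{Y}_T$ contains no $L_x^pL_T^{\infty}$ component), so the contraction would not close without enlarging the space and proving further linear (homogeneous and retarded) estimates for it.

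The paper's proof shows these heavy inhomogeneous estimates are unnecessary: since $\m\ge 3$ and $\p\ge\m$, one writes $\dx(u^{\m}+F(u))=(\m u^{\m-1}+F'(u))\dx u$, so the derivative always sits on a solution factor, and the Duhamel term is bounded crudely by Minkowski's inequality and the homogeneous estimates \eqref{wplin}, i.e.\ by $\int_{-T}^{T}$ of the $H_x^s$- (resp.\ $L_x^2$-) norm of the nonlinearity; Cauchy--Schwarz in time then produces the factor $T^{1/2}$, and the resulting space-time products are closed by \eqref{wp:nonlest}--\eqref{wp:nonlest2}, where $\dx u$ (resp.\ $\dx\Lambda u$) is put precisely into the smoothing component of $Y_T$ and the remaining $u$-factors into $\wt{Y}_T$ via Lemma \ref{lem:nonestq}. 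If you replace your asserted retarded estimates by this Minkowski-plus-\eqref{wplin} step and redo the pairing of norms accordingly, your argument becomes the paper's; as written, the central lemma of your scheme is missing and part of the norm placement exceeds what your function space controls.
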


\begin{proof}
The fractional Leibnitz and chain rules (see Appendix in \cite{KPV93}) and Lemma \ref{lem:nonestq} yield
\begin{equation} \label{wp:nonlest}
\begin{aligned}
&\left\| |\dx|^s ( F'(u) \dx u) \right\|_{L_{T,x}^2} \\
&\lesssim
\left\| |\dx|^s \dx u \right\|_{L_x^{\frac{2(2\m-1)}{\m-3}} L_T^{\frac{2(2\m-1)}{5}}} \left\| |u|^{\p-1} \right\|_{L_x^{\frac{2(2\m-1)}{\m+2}} L_T^{\frac{2\m-1}{\m-3}}} \\
& \quad + \left\| |\dx|^s u \right\|_{L_x^{\frac{2(2\m-1)}{\m-1}} L_T^{2(2\m-1)}} \| \dx u \|_{L_x^{p_0} L_t^{q_0}} \left\| |u|^{\p-2} \right\|_{L_x^{\frac{2(2\m-1)(\p-1)}{2\p+\m-4}} L_t^{\frac{2(2\m-1)(\p-1)}{(2\m-5)\p-4\m+9}}} \\
&\lesssim
\left\| |\dx|^s u \right\|_{Y_T} \left( \left\| u \right\|_{L_x^{r(q_1)} L_T^{q_1}}^{\p-1} + \| \dx u \|_{L_x^{p_0} L_t^{q_0}} \left\| u \right\|_{L_x^{r(q_2)} L_t^{q_2}}^{\p-2} \right) \\
& \lesssim
\left\| |\dx|^s u \right\|_{Y_T} \| u \|_{\wt{Y}_T}^{\p-1}.
\end{aligned}
\end{equation}
In addition, H\"{o}lder's inequality and Lemma \ref{lem:nonestq} imply
\begin{equation} \label{wp:nonlest2}
\| |u|^{\p} \|_{L_{T,x}^2}
= \| u \|_{L_{T,x}^{2\p}}^{\p}
\lesssim T^{\frac{1}{2}-\frac{\p}{q_3}} \| u \|_{L_x^{r(q_3)} L_T^{q_3}}^{\p}
\lesssim T^{\frac{1}{2}-\frac{\p}{q_3}} \| u \|_{\wt{Y}_T}^{\p}.
\end{equation}

We define the operator $K_{u_0} (u)$ by
\[
K_{u_0} (u) = \lp (t) u_0 + \int_0^t \lp (t-t') \dx \left( u^{\m} + F(u) \right) (t') dt'.
\]
By \eqref{eq:lambda2}, the estimates \eqref{wplin}, \eqref{wp:nonlest}, and \eqref{wp:nonlest2} show that
\begin{align*}
&\| K_{u_0} (t) \|_{Z_T^s} \\
&\lesssim \| \lp (t) u_0 \|_{Z_T^s} + \int_{-T}^T \Big( \left\| u^{\m-1} \dx u \right\|_{H_x^s} + \left\| u^{\m-1} \dx \Lambda u \right\|_{L_x^2}  \\ 
& \quad + \left\| F'(u) \dx u \right\|_{H_x^s} + \left\| F'(u) \dx \Lambda u \right\|_{L_x^2} + \left\| |u|^{\p} \right\|_{L_x^2} \Big) (t) dt \\
&\lesssim \| u_0 \|_{H^s} + \| x u_0 \|_{L^2} + T^{\frac{1}{2}} \Big( \left\| \lr{\dx}^s (u^{\m-1} \dx u) \right\|_{L_{T,x}^2} + \left\| u^{\m-1} \dx \Lambda u \right\|_{L_{T,x}^2} \\
& \quad + \left\| \lr{\dx}^s (F'(u) \dx u) \right\|_{L_{T,x}^2} + \left\| F'(u) \dx \Lambda u \right\|_{L_{T,x}^2} + \left\| |u|^{\p} \right\|_{L_{T,x}^2} \Big) \\
& \le C \| u_0 \|_{\wS^s} + C T^{\frac{1}{2}} \| u \|_{Z_T^s} \left( \| u \| _{\wt{Y}_T}^{\m-1} + \| u \| _{\wt{Y}_T}^{\p-1} \right).
\end{align*}
A similar calculation as above yields
\begin{align*}
& \| K_{u_0} (u_1) - K_{u_0} (u_2) \|_{Z_T^s} \\
&\le C T^{\frac{1}{2}} \left( \| u_1 \|_{\wt{Y}_T}^{\m-1} + \| u_2 \|_{\wt{Y}_T}^{\m-1} + \| u_1 \|_{\wt{Y}_T}^{\p-1} + \| u_2 \|_{\wt{Y}_T}^{\p-1} \right) \| u_1-u_2 \|_{Z_T^s}.
\end{align*}
Hence, taking $T>0$ with
\[
10 C T^{\frac{1}{2}} \left\{ \left( 2C \| u_0 \|_{\wS^{\frac{\p-\m+1}{2(\p-1)}}} \right)^{\m-1} + \left( 2C \| u_0 \|_{\wS^{\frac{\p-\m+1}{2(\p-1)}}} \right)^{\p-1} \right\} \le 1,
\]
we find that the mapping $K_{u_0}$ is a contraction on the ball
\[
B(2C \| u_0 \|_{\wS^s}) := \{ u \in Z_T^s \colon \| u \|_{Z_T^s} \le 2C \| u_0 \|_{\wS^s} \}.
\]
Accordingly, there exists a unique solution $u \in B (2C \| u_0 \|_{\wS^s})$ to \eqref{hKdVp}.
\end{proof}

\section{Short-range perturbations} \label{A:hKdVp}

In this appendix, we outline some modifications to Theorem \ref{thm} in the case of short-range perturbations \eqref{hKdVp}.
The main differences appear in the energy estimate.

Let $\p$ be a real number with $\p>\m$.
The existence of a local-in-time solution $u$ with $\lp (-t) u(t) \in \wS^{\frac{\p-1}{2\p}}(\R)$ follows from Proposition \ref{prop:WPp} and $H^{\frac{\p-1}{2\p}}(\R) \hookrightarrow L^{2\p}(\R)$.

We need some modifications in the energy estimate for $\Lambda u$ when $\p \in (\m,\m+\frac{1}{2})$, because the second and third terms on the right hand side of \eqref{eq:lambda2} do not have enough decay.
Set $\dea := \max (\frac{2\m+1-2\p}{2\m},0)$.
Because \eqref{est:u_infty0} yields
\begin{align*}
\| \m F(u) - F'(u) u \|_{L^2}
&\lesssim \| |u|^{\p} \|_{L^2} \\
&\lesssim (D\eps)^{\p} t^{-\frac{\p}{\m}} \left( \int_{|x| \le t^{\frac{1}{\m}}} dx + \int_{|x|\ge t^{\frac{1}{\m}}} \lr{t^{-\frac{1}{\m}}x}^{-\frac{\p (\m-2)}{\m-1}} dx \right)^{\frac{1}{2}} \\
&\lesssim (D\eps)^{\p} t^{-\frac{2\p-1}{2\m}},
\end{align*}
we have
\begin{align*}
&\frac{1}{2} \dt \left( t^{-\dea} \| \Lambda u(t) \|_{L^2} \right)^2 \\
&= -\dea t^{-2\dea-1} \| \Lambda u(t) \|_{L^2}^2  - \frac{\m (\m-1)}{2} t^{-2\dea} \int_{\R} u^{\m-2} \dx u (\Lambda u)^2 dx \\
& \quad - \frac{1}{2} t^{-2\dea} \int_{\R} F''(u) \dx u (\Lambda u)^2 dx + O \left( (D\eps)^{\p} t^{-2\dea-\frac{2\p-1}{2\m}} \| \Lambda u(t) \|_{L^2} \right) \\
&\lesssim  (D\eps)^{\m-1} t^{-1} \left( t^{-\dea} \| \Lambda u(t) \|_{L^2} \right)^2 + (D\eps)^{2\p-\m+1} t^{-2\dea+1-\frac{2\p-1}{\m}}.
\end{align*}
Hence, Gronwall's inequality shows
\[
\| u(t) \|_{X} \lesssim \eps \lr{t}^{\dea+\eps}.
\]
From $\dea < \frac{1}{2\m}$, Lemma \ref{lem:Xtilde} holds true as long as $\| u_0 \|_{\wS^{\frac{\p-1}{2\p}}} \le \eps \ll 1$.

Moreover, \eqref{limQ} is replaced with
\[
\left\| |\dy|^{\m-1} U - y U - \m U^{\m} \right\|_{L^2_y}
= \left\| (\Lambda u) (t,t^{\frac{1}{\m}}y) + t F(u(t,t^{\frac{1}{\m}}y)) \right\|_{L^2_y}
\lesssim t^{-\frac{1}{2\m}+\dea+\eps}.
\]
The remaining arguments in \S \ref{S:proof} are unchanged as long as $\rho = \frac{1}{\m} (\frac{1}{2\m}-\eps)$ is replaced with $\wt{\rho} := \frac{1}{\m}(\frac{1}{2\m}-\dea-\eps)$.

\section*{Acknowledgment}
This work was supported by JSPS KAKENHI Grant number JP16K17624 and Alumni Association ``Wakasatokai'' of Faculty of Engineering, Shinshu University.

\end{document}